\documentclass[11pt, reqno]{amsart}
\usepackage{latexsym,amsmath,amssymb, amsthm, mathtools}
\usepackage{cases, verbatim}
\usepackage[active]{srcltx}
\usepackage{esint}
\usepackage[colorlinks]{hyperref}
\usepackage{hypernat}
\usepackage{xcolor}
\usepackage{float}
\usepackage[normalem]{ulem}
\usepackage{cancel}
\usepackage{subcaption}

\usepackage[T1]{fontenc}
\usepackage{mathscinet}

\hypersetup{
    bookmarks=true,         
    unicode=false,          
    pdftoolbar=true,        
    pdfmenubar=true,        
    pdffitwindow=false,     
    colorlinks=false,       
    linkcolor=red,          
    linkbordercolor=red,
    citecolor=green,        
    citebordercolor=green,
    filecolor=magenta,      
    urlcolor=cyan,           
   urlbordercolor={1 1 1}  
}

\usepackage[margin=1.5in]{geometry}

\setlength{\topmargin}{-0.2in}
\setlength{\oddsidemargin}{0.3in}
\setlength{\evensidemargin}{0.3in}
\setlength{\textwidth}{5.9in}
\setlength{\rightmargin}{0.7in}
\setlength{\leftmargin}{-0.3in}
\setlength{\textheight}{9.1in}

\newtheorem{thm}{Theorem}
\newtheorem{lem}[thm]{Lemma}
\newtheorem{prop}[thm]{Proposition}

\theoremstyle{remark}
\newtheorem{rmk}[thm]{Remark}
\newtheorem{example}[thm]{Example}


\theoremstyle{definition}
\newtheorem{defi}[thm]{Definition}

\numberwithin{thm}{section} 
\numberwithin{equation}{section}

\makeatletter

\newcommand{\Rmnum}[1]{\expandafter\@slowromancap\romannumeral #1@}
\makeatother

\def\Om{\Omega}


\def\R{{\mathbb R}}
\def\X{{\mathbf X}}

\newcommand{\cH}{\mathcal H}

\newcommand{\pO}{\partial\Omega}
\newcommand{\Oba}{\overline{\Omega}}

\newcommand{\vep}{\varepsilon}

\newcommand{\ul}{\underline}

\newcommand{\bpm}{\begin{pmatrix}}
\newcommand{\epm}{\end{pmatrix}}
\newcommand{\mo}{\operatorname{Mod}}

\newcommand{\beq}{\begin{equation}}
\newcommand{\eeq}{\end{equation}}

\def\diam{{\rm diam\,}}

\def\loc{{\rm loc}}







\belowdisplayskip=18pt plus 6pt minus 12pt \abovedisplayskip=18pt
plus 6pt minus 12pt
\parskip 8pt plus 1pt




\title[Discontinuous eikonal equations]{Discontinuous eikonal equations\\ in metric measure spaces}

\author[Q. Liu]{Qing Liu}
\address{Qing Liu, Geometric Partial Differential Equations Unit, Okinawa Institute of Science and Technology Graduate University, 1919-1, Tancha Onna-son, Okinawa 904-0495, Japan\\ Email: {\tt qing.liu@oist.jp}}

\author[N. Shanmugalingam]{Nageswari Shanmugalingam}
\address{Nageswari Shanmugalingam, Department of Mathematical Sciences, University of Cincinnati, 
P.O.Box 210025, Cincinnati, OH 45221-0025, USA\\ Email: {\tt shanmun@uc.edu}}

\author[X. Zhou]{Xiaodan Zhou}
\address{Xiaodan Zhou, Analysis on Metric Spaces Unit, Okinawa Institute of Science and Technology G
raduate University, 1919-1 Tancha, Onna-son, 
Okinawa 904-0495, Japan\\
Email: {\tt xiaodan.zhou@oist.jp}}

\date{\today}

\begin{document}

\begin{abstract}
In this paper, we study the eikonal equation in metric measure spaces, where the inhomogeneous term is allowed to be discontinuous, unbounded and merely $p$-integrable in the domain with a finite $p$. For continuous eikonal equations, it is known that the notion of Monge solutions is equivalent to the standard definition of viscosity solutions.  Generalizing the notion of Monge solutions in our setting, we establish uniqueness and existence results for the associated Dirichlet boundary problem. The key in our approach is to adopt a new metric, based on the optimal control interpretation, which integrates the discontinuous term and converts the eikonal equation to a standard continuous form. We also discuss the H\"older continuity of the unique solution with respect to the original metric under regularity assumptions on the space and the inhomogeneous term. 
\end{abstract}

\subjclass[2010]{35R15, 49L25, 35F30, 35D40}
\keywords{eikonal equation, metric spaces, discontinuous inhomogeneous term, viscosity solutions}

\maketitle

\section{Introduction}
\subsection{Background and motivation}
In this paper we study the eikonal equation in a metric measure space $(\X, d, \mu)$ with a possibly discontinuous inhomogenous term, where $\mu$ is assumed to be a nonnegative, locally finite Borel measure. We assume that $\X$ is a complete length space and $\Omega\subsetneq\X$ is a bounded domain. We consider 
\begin{equation}\label{eikonal}
      |\nabla u|(x)=f(x)\quad \text{$x\in \Omega$}
\end{equation}
with the Dirichlet boundary condition
\begin{equation}\label{dirichlet}
u=g \quad \text{on $\partial\Omega$,}
\end{equation}
where $f$ is a given positive Borel measurable function in $\overline{\Omega}$ and $g$ is a given bounded function on $\partial \Omega$. Further assumptions on $f$ and $g$ will be introduced later. 

Well-posedness of the eikonal equation and more general Hamilton-Jacobi equations is a classical topic. The theory of viscosity solutions provides a successful tool to solve such fully nonlinear PDEs in the Euclidean space. It is well known that when $\Omega\subset \R^n$ and $f\in C(\Oba)$ satisfies the lower bound condition
\begin{equation}\label{f-lower}
\alpha:=\inf_{\Omega}f >0,
\end{equation}
there exists a unique viscosity solution $u\in C(\Oba)$ of \eqref{eikonal} with Dirichlet data
\eqref{dirichlet} under appropriate regularity condition on $g$. The uniqueness result is obtained by establishing a comparison principle while the existence of solutions can be proved via several different approaches including Perron's method and a control-based formula.

Let us give more details on the connection with the optimal control theory, as it largely motivates our exploration and plays a central role in this work. It is widely known \cite{Libook, BCBook} that, under appropriate conditions on $f$ and $g$, the unique viscosity solution $u$ of \eqref{eikonal}
 with the Dirichlet boundary condition~\eqref{dirichlet} in the Euclidean space can be expressed by 
\begin{equation}\label{control formula}
u(x)=\inf\left\{g(y)+\int_\gamma f\, ds:\ \text{$\gamma$ is a curve in $\Oba$ joining $x\in \Oba$ and $y\in \partial\Omega$}\right\}.
\end{equation}
This formula represents the so-called value function of an optimal control problem, where one tries to move a point from $x\in \Oba$ toward the boundary and minimize the total payoff comprising a running cost $f$ integrated along the trajectory and a terminal cost $g$ at the first hit on the boundary. See \cite{IM, HCHZ} and references therein for more recent developments on representation formulas for general Hamilton-Jacobi equations. 

The representation formula not only provides a clear understanding of the solution and promotes various control-related practical applications, but also suggests possible generalization of the theory for a possibly discontinuous function $f$. 
In fact, much progress has been made in this direction in the Euclidean space. The study on discontinuous Hamilton-Jacobi equations in the Euclidean space was initiated in the work \cite{I0} and later found applications in geometric optics with different layers, image processing, shape from shading, etc. Further development in different contexts to treat discontinuous Hamiltonians including the time-dependent case can be found in \cite{Tou, NeSu, Ostr, Sor, CaSi1, Cam, DE, BrDa, CaSi, Sor1, ChHu, GGR, GHa}. 
The aforementioned papers in the Euclidean case, reduced to our eikonal problem, seem to need at least local essential boundedness of $f$ for their well-posedness results except for the papers~\cite{Tou, Sor, Sor1}. In these three papers, the results regarding uniqueness and existence results rely on regularity conditions on the discontinuity set of $f$ or certain controllability assumptions for the optimal control formulation. Such conditions can hardly be extended further to our relaxed geometric setting due to the lack of smooth structure in general metric spaces. 

In the present paper we aim to study in a direct manner the eikonal equation with potentially wilder 
discontinuity than those considered in~\cite{Tou, Sor, Sor1}. We drop the local boundedness of 
the inhomogeneous data $f$ and allow it to be as discontinuous as a function in $L^p(\Omega)$.  It is worth adding that an important tool called $L^p$ viscosity solution theory has been developed for second order uniformly elliptic equations with measurable data in 
\cite{CCKS, CKLS}, but it does not seem to apply to the first order case. For the Dirichlet problem \eqref{eikonal}\eqref{dirichlet}, our primary observation is that the formula \eqref{control formula} actually requires nothing more than the path integrability of $f$ along a curve connecting $x$ to the boundary, which is a much weaker assumption than the continuity or even boundedness of $f$. Let us present a simple 
one-dimensional example to clarify this point. 

Let $\Omega=(-1, 1)\subset \R$ and set $f: \Omega\to \R$ such that
\begin{equation}\label{eq no-bilip1}
f(x)=\frac{1}{\sqrt{|x|}}, \quad x\in [-1, 1]\setminus\{0\}.
\end{equation}
The value $f(0)$ can be chosen to be any positive real number in order to meet the requirement \eqref{f-lower}. Such type of singular eikonal equations finds applications in lens design for wireless communication systems \cite{Chen-NC}. It is clear that $f\in L^p(\Omega)$ with $p\in [1, 2)$. Therefore the formula in \eqref{control formula} still makes sense for given boundary data $g$ at $\pm 1$. For instance, setting $g(\pm1)=0$, we see that the formula immediately yields
\begin{equation}\label{eq ex dis eikonal}
u(x)=2(1-\sqrt{|x|}) \quad \text{for $x\in [-1, 1]$.}
\end{equation}
It seems to be the correct  
solution of \eqref{eikonal}\eqref{dirichlet},  for it satisfies the standard definition of viscosity solutions if we consider $f(0)=\infty$. Also, one can verify the validity of approximation via truncation of the running cost. Indeed, taking $f_M=\min\{f, M\}$ for $M>0$ large, we can obtain a unique viscosity solution for the truncated problem with the bounded function $f_M$. By direct calculations, we easily see that the approximate solution converges uniformly to $u$ in $[-1, 1]$. Moreover, although in general we cannot expect Lipschitz regularity of solutions due to the loss of boundness of $f$, the example shows that the solution enjoys H\"older regularity that depends on the integrability of $f$. This can be viewed as a natural consequence corresponding to the Morrey-Sobolev inequality, as the eikonal equation suggests $u\in W^{1, p}(\Omega)$ at least formally. 

To the best of our knowledge, the well-posedness and regularity issue for this type of discontinuous eikonal equations have not been carefully examined even in the Euclidean case. 
We are therefore motivated to tackle these problems in a general way that permits applications to a broad class of metric measure spaces with length structure. We develop the notion of Monge solutions used in \cite{NeSu, BrDa, LShZ} and show uniqueness and existence of solutions in this general setting. We also establish H\"older regularity of the solution under certain regularity assumptions on $f$ and $\X$. More details about our results will be given momentarily. 

First-order Hamilton-Jacobi equations in metric spaces have recently attracted great attention for applications in optimal transport \cite{AGS13, Vbook}, mean field games \cite{CaNotes}, topological networks \cite{SchC, IMZ,  ACCT,  IMo1, IMo2} etc. Concerning 
the equations that depend on the gradient in terms of its norm, we refer to \cite{AF, GHN, GaS, Na1} for several different notions of viscosity solutions as well as the associated uniqueness and existence results. In our previous paper  \cite{LShZ},  we proved the equivalence of these solutions and introduce another alternative notion, extending the approach of Monge solutions proposed in \cite{NeSu} to the eikonal equation in complete length spaces; see also an application of this Monge-type notion to the study of the first eigenvalue problem for the infinity Laplacian in metric spaces \cite{LMit1}. Our current work further develops the notion of Monge solutions so that viscosity solution theory will be available to handle a class of nonlinear equations in general metric measure spaces with data having a wider range of discontinuities. 

Our work is new even from the point of view of pure analysis.
There has been great progress on various aspects of  first order analysis on metric measure spaces, including first order Sobolev spaces and their relation to variational problems and partial differential equations. One central notion that generalizes the norm of the gradient of a Sobolev function in Euclidean spaces is the so-called upper gradient. We refer the reader to \cite{HKSTBook} for a survey of the Sobolev spaces theory built upon this notion. From this point of view, the Dirichlet problem~\eqref{eikonal} with boundary data~\eqref{dirichlet} can be interpreted as an extension problem for a function $u$ to $\Oba$ with boundary value $g$ assigned on $\partial \Omega$ and the upper gradient $f$ prescribed in $\Omega$. In general, many possible choices can be expected, among which the viscosity solution constructed by the formula \eqref{control formula} is usually considered to be the most ``physical''. This perspective can also be found in the work~\cite{Che}.
There is certainly no need to restrict the prescribed gradient to a continuous or bounded function class. We are thus inspired to address the extension problem for $f\in L^p(\Omega)$ and investigate properties of the extended function $u$.


\subsection{General results on existence and uniqueness of solutions}
Our presentation can be divided into two parts. The first part, consisting of 
Sections \ref{sec:optical}--\ref{sec:existence}, is a general PDE theory with only the length structure and curve-wise integrals of $f$ involved. In our main well-posedness results, we impose certain key conditions on the closure of the domain 
$\overline{\Omega}$ and the inhomogeneous term $f$. 
In the second part, adding a measure structure to the space, we provide more specific sufficient conditions for those assumptions on $\overline{\Omega}$ and $f$.

One of the crucial steps in the first part is to find an appropriate notion of viscosity solutions in this setting that agrees with the representation formula \eqref{control formula} and justifies a comparison principle for us to prove the uniqueness. As mentioned above, we adopt the notion of Monge solutions, which in the case of a continuous $f$, requires
\begin{equation}\label{monge-cont}
|\nabla^- u|(x)=f(x)\quad \text{for all $x\in \Omega$}, 
\end{equation}
where, for a locally Lipschitz function $u$, the subslope $|\nabla^- u|$ is defined by
\[
|\nabla^-u|(x)=\limsup_{d(x, y)\to 0} \frac{\max\{u(x)-u(y), 0\}}{d(x,y)}.
\]
In the Euclidean space, this definition is known to be equivalent to the usual viscosity solutions \cite{NeSu}. One of its advantages is that it avoids using smooth test functions, whose lack of availability is indeed a key difficulty in the study of Hamilton-Jacobi equations in general metric spaces. 

When $f$ is of class $L^p(\Omega)$, the definition in \eqref{monge-cont} may not apply directly. For example, if one changes the value of $f(0)$ to be finite in the aforementioned example, then the expected solution \eqref{eq ex dis eikonal} does not satisfy definition \eqref{monge-cont} at $0$. 
We thus need to somehow exploit collective information of $f$ rather than its pointwise value. Also, we always assume that the lower bound condition \eqref{f-lower} holds for $f$ in order to obtain uniqueness. Heuristically speaking, since $f$ is uniformly positive in $\Omega$, we can rewrite the equation \eqref{eikonal} as 
\[
\frac{|\nabla u|(x)}{f(x)}=1\quad \text{in $\Omega$}
\]
and adopt a new pointwise slope by incorporating the value of
$f(x)$ into the metric. This approach enables us to study the eikonal equation as if 
the right hand side is a constant. 
Such connection can be rigorously realized by introducing the following new metric, which is also called optical length function in \cite{NeSu, BrDa} etc.,  
\begin{equation}\label{ol-fun}
L_f(x, y):=\inf\left\{\int_\gamma f\, ds: \ \text{$\gamma\in \Gamma_f(x, y)$}\right\}\quad \text{for any $x, y\in \Oba$,}
\end{equation}
where $\Gamma_f(x, y)$ denotes the collection of all curves $\gamma$ connecting $x$ and $y$ in $\overline{\Omega}$ on which the path integral is finite, namely, 
\[
\Gamma_f(x, y)=\left\{\gamma:[0, \ell]\to \Oba: \int_\gamma f\, ds<\infty,\ \gamma(0)=x, \gamma(\ell)=y \text{ and } |\gamma'|(s)=1 \text{ for a.e. }s\right\}.
\]
We say that the curves $\gamma$ in $\Gamma_f(x, y)$ are admissible with respect to $f$. 
We shall assume throughout Sections \ref{sec:optical}--\ref{sec:existence} that
\begin{equation}\label{c-integrable}
\Gamma_{f}(x, y)\neq \emptyset \quad \text{for all $x, y\in \Oba$}
\end{equation}
so that the formula in \eqref{control formula} is well defined for any $x\in \Oba$. In Section \ref{sec:admissible}, we discuss some sufficient assumptions on $\overline{\Omega}$ implying \eqref{c-integrable}.

Adopting the subslope with respect to this newly defined metric $L_f$, defined by
\begin{equation}\label{new subslope}
|\nabla_f^- u|(x):=\limsup_{L_f(x, y)\to 0} \frac{\max\{u(x)-u(y), 0\}}{L_f(x,y)}\quad \text{for $x\in \Omega$,}
\end{equation}
one can simply define the Monge solution of \eqref{eikonal} in Definition \ref{Monge def} by asking that $u$ satisfies
\begin{equation}\label{new monge}
|\nabla_f^- u|(x)=1\quad \text{for all $x\in \Omega$.}
\end{equation}
The same idea of changing metrics was also outlined in \cite{Bu} in connection with metric geometry and in~\cite{TrYu} 
for applications in homogenization.  

This new definition of Monge solutions is consistent with the well-studied case where 
$f$ is continuous. In fact, if $f$ is continuous at $x_0\in \Omega$, then 
\begin{equation}\label{length cont}
\frac{L_f(x, x_0)}{d(x, x_0)}\to f(x_0) \quad \text{as $d(x, x_0)\to 0$.}
\end{equation}
It is then easily seen that $|\nabla_f^-u|(x_0)=1$ and $|\nabla^- u|(x_0)=f(x_0)$ are equivalent. 

Switching the metric from $d$ to $L_f$ enables us to reduce the possibly discontinuous inhomogeneous data in~\eqref{monge-cont} to the continuous standard form~\eqref{new monge}. As a consequence, most of our arguments in \cite{LShZ} can be applied to establish the comparison principle and prove existence of solutions of \eqref{eikonal},\,\eqref{dirichlet} under the compatibility condition 
\begin{equation}\label{growth}
g(x)\le  L_f(x, y)+g(y)\quad \text{for all $x, y\in \partial \Omega$.}
\end{equation} 
However, a major challenge concerning the topological change of the space appears in the proof of comparison principle. Note that the metrics $L_f$ and $d$ are not topologically equivalent in general, especially when $f$ is unbounded. One can find examples where $d(x, y)\to 0$ fails to imply $L_f(x, y)\to 0$; see Example \ref{ex singular}. For most of our analysis, 
we 
impose the following key compatibility condition 
\begin{equation}\label{ol-uc}
\sup\{L_f(x, y): x, y\in \Oba,\ d(x, y)\leq r\}\to 0 \quad \text{as $r\to 0$},
\end{equation}
which ensures that the topology induced by $L_f$ is in agreement with the metric topology inherited from $(\X,d)$, and
enables us to obtain not only the uniqueness but also the uniform continuity of Monge solutions. 

In general, if  \eqref{ol-uc} fails to hold, then, because of the change in the metric, 
the domain $\Omega$ that is bounded in $d$ may become unbounded in $L_f$. The notions of interior and boundary points of $\Omega$ may also change accordingly. 
However, it is still possible to show uniqueness and existence of Monge solutions if we have boundedness of  $\Oba$ with respect to $L_f$ as well as a weaker version of \eqref{ol-uc} as below:
\begin{equation}\label{ol-uc2}
\sup\{L_f(x, y): x, y\in \Oba\setminus \Omega_{r},\ d(x, y)\leq r\}\to 0 \quad \text{as $r\to 0$},
\end{equation}
where $\Omega_{r}=\{x\in \Omega: d(x,\partial\Omega) >r\}$. Here, we denote $d(x, \partial \Omega)=\inf_{y\in \partial \Omega} d(x, y)$. In this case, we can get a unique Monge solution that is Lipschitz continuous in $L_f$ but possibly discontinuous in $d$; see Example \ref{ex disc sol} for a concrete example. Note that the completeness of $\Oba$ is preserved under our metric change (Lemma \ref{lem complete}), which enables us to establish a comparison principle based on Ekeland's variational principle.


Our main result can be summarized as follows. Its proof is given in Section~\ref{sec:existence}.

\begin{thm}[Existence and uniqueness of solutions]\label{exist}
Let $(\X, d)$ be a complete length space and $\Omega\subsetneq\X$ be a bounded domain. 
Assume that \eqref{f-lower} and \eqref{c-integrable} hold. Let $L_f$ be given by \eqref{ol-fun}, and 
$g: \partial \Omega\to \R$ be bounded.
Consider the function $u$ defined by
\begin{equation}\label{lax}
u(x):=\inf_{y\in \partial \Omega} \{L_f(x, y)+g(y)\}, \quad x\in \overline{\Omega}.
\end{equation}

 Then the following results hold.  
\begin{enumerate}
\item[(i)] $u$ is Lipschitz in $\Oba$ with respect to $L_f$ and is a Monge solution of \eqref{eikonal}. 
\item[(ii)] If $g$ further satisfies the compatibility condition  \eqref{growth}, $(\Oba, L_f)$ is bounded, and \eqref{ol-uc2} holds,
then $u$ is the unique Monge solution 
of the  Dirichlet problem \eqref{eikonal},\,\eqref{dirichlet} with~\eqref{dirichlet} interpreted as
\begin{equation}\label{bdry regularity}
\sup\{|u(x)-g(y)|: x\in \Oba,\ y\in \partial \Omega,\ d(x, y)\leq \delta\}\to 0\quad \text{as $\delta\to 0$}.
\end{equation}
\item[(iii)] If $g$ satisfies  \eqref{growth}, and \eqref{ol-uc} holds, then $u$ is the unique Monge solution,  uniformly continuous with respect to $d$, of the Dirichlet problem \eqref{eikonal},\,\eqref{dirichlet} with~\eqref{dirichlet} interpreted as \eqref{bdry regularity}.
\end{enumerate}
\end{thm}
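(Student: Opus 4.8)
The plan is to establish the three assertions in sequence, exploiting the metric change $d \mapsto L_f$ to reduce everything to the constant-coefficient eikonal equation $|\nabla_f^- u| = 1$, where the arguments from \cite{LShZ} apply almost verbatim. For part (i), the key observation is that the value function $u$ in \eqref{lax} is precisely the distance-like function associated with the metric $L_f$: one first checks that $L_f$ is a genuine (possibly extended) pseudometric on $\Oba$, in particular that it satisfies the triangle inequality, which follows by concatenating admissible curves. From $L_f(x,z) \le L_f(x,y) + L_f(y,z)$ one gets immediately that $|u(x) - u(x')| \le L_f(x, x')$, i.e.\ $u$ is $1$-Lipschitz with respect to $L_f$. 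To see that $u$ is a Monge solution, I would argue both inequalities for the subslope $|\nabla_f^- u|$: the upper bound $|\nabla_f^- u|(x) \le 1$ is immediate from the Lipschitz bound; for the lower bound $|\nabla_f^- u|(x) \ge 1$ at an interior point $x \in \Omega$, one picks a near-optimal curve $\gamma$ from $x$ to the boundary realizing $u(x)$ up to error $\epsilon$, moves a small $L_f$-distance $t$ along $\gamma$ to a point $x_t$, and uses optimality to deduce $u(x) - u(x_t) \ge t - \epsilon$ (a standard dynamic programming / sub-optimality estimate), so that the difference quotient tends to $1$. Here one must use \eqref{f-lower} and \eqref{c-integrable} to guarantee that such curves exist and that $L_f(x, x_t) > 0$ for $x_t \ne x$; the lower bound $\alpha > 0$ prevents degeneracy of $L_f$ near $x$.

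For part (ii), the structure is: existence is already given by (i) once we check $u = g$ on $\partial\Omega$ in the sense of \eqref{bdry regularity}, and this is where the compatibility condition \eqref{growth} and the weak continuity hypothesis \eqref{ol-uc2} enter — \eqref{growth} gives $u(y) = g(y)$ for $y \in \partial\Omega$ by forcing the infimum in \eqref{lax} to be attained (morally) at $y$ itself, and \eqref{ol-uc2} converts $d$-closeness near the boundary into $L_f$-closeness, yielding the uniform boundary attainment. The substantial content is uniqueness. Here I would follow the comparison-principle strategy: given two Monge solutions $u_1, u_2$ with the same boundary behavior, one wants $u_1 \le u_2$ on $\Oba$. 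Since $L_f$ and $d$ need not be topologically equivalent, one cannot simply use compactness of $\Oba$; instead one invokes completeness of $(\Oba, L_f)$ — guaranteed by Lemma~\ref{lem complete} — together with boundedness of $(\Oba, L_f)$, and runs Ekeland's variational principle to produce a near-minimizer $x_0$ of $u_1 - u_2$ (suitably penalized) at which one can extract a contradiction with the Monge equations $|\nabla_f^- u_1|(x_0) = |\nabla_f^- u_2|(x_0) = 1$ unless $x_0 \in \partial\Omega$; the boundary term then closes the estimate via \eqref{bdry regularity}. The role of \eqref{ol-uc2}, as opposed to the stronger \eqref{ol-uc}, is to ensure that the Ekeland minimizer, when it drifts toward the boundary, does so in a way compatible with the boundary layer $\Omega_r$, so that the argument does not require $d$-continuity in the interior.

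For part (iii), the proof is largely a corollary of (ii): the stronger condition \eqref{ol-uc} implies that the identity map $(\Oba, d) \to (\Oba, L_f)$ is uniformly continuous, so $(\Oba, L_f)$ is automatically bounded and \eqref{ol-uc2} holds; moreover \eqref{ol-uc} upgrades the $L_f$-Lipschitz regularity of $u$ from (i) to uniform $d$-continuity, by composing the modulus of continuity from \eqref{ol-uc} with the Lipschitz bound. Uniqueness then follows from the uniqueness statement in (ii), once one checks that any $d$-uniformly-continuous Monge solution is in particular a solution in the class handled by (ii). One should also verify that the two interpretations of the boundary condition coincide under \eqref{ol-uc}. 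The main obstacle throughout is the uniqueness argument in (ii): the failure of topological equivalence between $d$ and $L_f$ means that standard doubling-variable or compactness arguments must be replaced by the Ekeland-based scheme on the complete metric space $(\Oba, L_f)$, and one must carefully track which points are genuinely interior with respect to $L_f$ versus $d$, since — as the paper emphasizes — a $d$-bounded domain may become $L_f$-unbounded and its boundary structure may change. Getting the penalization in Ekeland's principle to interact correctly with the boundary condition \eqref{bdry regularity}, so that an interior Ekeland minimizer yields a contradiction while a near-boundary one is controlled, is the delicate point; the rest is adaptation of \cite{LShZ}.
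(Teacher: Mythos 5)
Your route is the paper's: in (i) you use the triangle inequality for $L_f$ to get \eqref{dpp-sub} and $1$-Lipschitzness, and the dynamic-programming suboptimality estimate along a near-optimal curve (with the error scaled against the step, exactly as the paper does with the error $\vep r$) to get the supersolution inequality; in (ii) you obtain \eqref{bdry regularity} from \eqref{growth} together with \eqref{ol-uc2} and deduce uniqueness from an Ekeland-based comparison principle on the complete space $(\Oba,L_f)$; and (iii) is reduced to (ii) via \eqref{ol-uc}$\Rightarrow$\eqref{ol-uc2} plus the upgrade to uniform $d$-continuity from Lemma~\ref{lem conti}. Up to this level the plan coincides with the paper, which however quarantines the comparison argument in Theorem~\ref{thm comparison monge} and simply cites it.

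The genuine gap is in your sketch of that comparison step. If you apply Ekeland's principle to $u_1-u_2$ with only the additive penalty $\vep L_f(x_\vep,\cdot)$, then at an interior Ekeland point the Monge subsolution property of $u_1$ and supersolution property of $u_2$ give $1\le 1+\vep$, which is no contradiction: you cannot ``extract a contradiction with $|\nabla_f^- u_1|=|\nabla_f^- u_2|=1$'' when both slopes are exactly $1$. A strictness mechanism is indispensable; the paper first normalizes $u,v\ge 0$ and compares $\lambda u$ with $v$ for $\lambda\in(0,1)$, so that the Ekeland point yields $1\le\lambda+\vep$, contradicting the choice $\vep<1-\lambda$, and the boundary is handled not by the Ekeland point ``being on $\partial\Omega$'' but by first locating a near-maximizer inside $\Omega_\delta$ using the hypothesis \eqref{bdry verify monge}. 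If you instead cite Theorem~\ref{thm comparison monge}, as the paper's proof of Theorem~\ref{exist} does, this issue disappears, but you must still check \eqref{bdry verify monge} for the pair of solutions from \eqref{bdry regularity} (the short triangle-type estimate in the paper). A secondary point: your claim in (iii) that \eqref{ol-uc} automatically makes $(\Oba,L_f)$ bounded is not justified when $\Oba$ is not compact (chaining small $d$-balls requires a uniform bound on the number of steps); it is also unnecessary, since the paper's proof of (iii) only uses \eqref{ol-uc}$\Rightarrow$\eqref{ol-uc2} together with Lemma~\ref{lem conti}.
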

The interpretation \eqref{bdry regularity} is automatically guaranteed by~\eqref{dirichlet} if $(\Oba, L_f)$ is assumed to be compact. 
Our comparison results with and without compactness of $(\Oba, L_f)$ are presented in Section \ref{sec:comparison-proof}. 
We also include a discussion in Section \ref{sec:bdry-loss} about the case when the compatibility condition \eqref{growth} on $g$ fails to hold. In the Euclidean space, this corresponds to loss of Dirichlet boundary data, and one needs to relax the boundary condition in the viscosity sense; see for example \cite{So1, So2, I7} for the well-posedness results under certain regularity conditions on $\partial \Omega$. Thanks to the general setting considered in this
present paper, we treat this problem in a more direct way. If the boundary data $g$ on a part of $\partial \Omega$ is lost, we take the remaining piece, denoted by $\Sigma_g$, as the real boundary and regard $\Oba\setminus \Sigma_g$ as the interior of the domain. In fact, the formula \eqref{control formula} does yield the Monge solution property of $u$ in $\Oba\setminus \Sigma_g$. Such flexibility in changing the notion of boundary and interior points leads us to uniqueness and existence of solutions to the reduced Dirichlet problem. 

\subsection{Regularity of solutions and further discussions}
The second half of our presentation is devoted to understanding more deeply the key assumptions \eqref{c-integrable} and \eqref{ol-uc} in Theorem \ref{exist}. We study this regularity problem in the setting of a general metric measure space $(\X, d, \mu)$ with the measure $\mu$ assumed to be doubling. We recall that a locally finite Borel measure $\mu$ is doubling if there exists $C_d\ge 1$ such that $\mu(B_{2r}(x))\le C_d\mu(B_r(x))$ for all $r>0$, where $B_r(x)$ denotes the open metric ball centered at $x\in \X$ with radius $r>0$, i.e., 
\[
B_r(x):=\{y\in \X\, :\, d(x,y)<r\}.
\] 
We also call $Q=\log_2 C_d$ the homogeneous dimension of the doubling metric measure space. 
It is not difficult to see that by the doubling property of $\mu$, we have, for any metric balls $B_r(x)\subset B_R(x)\subset \X$,
\begin{equation}\label{hdim}
\frac{\mu (B_r(x))}{\mu(B_R(x))}\ge C{\left(\frac{r}{R}\right)}^Q.
\end{equation}
We prove the property \eqref{ol-uc} and H\"{o}lder regularity with respect to the metric $d$ of the solution $u$ defined by \eqref{lax} under either of the following two assumptions. 
\begin{itemize}
\item[(A1)] $\Oba$ satisfies a $p$-Poincar\'e inequality for some finite $p>\max\{1, Q\}$ and $f\in L^p(\Oba)$, where $Q>0$ is the homogeneous dimension  from~\eqref{hdim} above. 
\item[(A2)] $\Oba$ satisfies an $\infty$-Poincar\'e inequality and $f\in L^\infty(\Oba)$. 
\end{itemize}

\begin{thm}[Regularity of solutions]\label{thm holder}
Let $(\Oba, d, \mu)$ be a  complete bounded metric measure space with $\mu$ a doubling measure. Let $g: \partial \Omega\to \R$ be bounded and $u$ be the Monge solution defined by \eqref{lax}. 
If $\Oba$ and $f$ satisfy (A1), then $u$ is $(1-\frac{Q}{p})$-H\"older continuous in $\Oba$ with respect to the metric $d$. If $\Oba$ and $f$ satisfy (A2), then $u$ is Lipschtiz continuous in $\Oba$ with respect to the metric $d$. 
\end{thm}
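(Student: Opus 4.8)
The plan is to control the oscillation of $u$ in terms of the metric $d$ by estimating the optical length function $L_f$ from above by $d$ to a suitable power. Since $u$ defined by \eqref{lax} is $1$-Lipschitz with respect to $L_f$ (this is part (i) of Theorem \ref{exist}), it suffices to prove that under (A1) one has
\[
L_f(x,y)\le C\, d(x,y)^{1-\frac{Q}{p}}\quad\text{for all }x,y\in\Oba,
\]
and under (A2) that $L_f(x,y)\le C\,d(x,y)$. Indeed, $|u(x)-u(y)|\le L_f(x,y)$ then immediately gives the claimed H\"older (resp.\ Lipschitz) bound; as a byproduct this also re-establishes \eqref{ol-uc}.

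First I would fix $x,y\in\Oba$ with $d(x,y)=r$ small. By definition of $L_f$ I need to exhibit, or estimate the cost of, an admissible curve from $x$ to $y$ whose $f$-length is at most $Cr^{1-Q/p}$. The natural mechanism is the telescoping/chaining argument that underlies the pointwise estimates for functions with upper gradients in Poincar\'e spaces (as in \cite{HKSTBook}): the $p$-Poincar\'e inequality forces $f$, viewed as an upper gradient of the ``distance-like'' function $L_f(\cdot,y)$ or more directly through the maximal-function bound, to satisfy
\[
\int_\gamma f\,ds \;\lesssim\; r\big(Mf^p(x)+Mf^p(y)\big)^{1/p}
\]
along a good curve $\gamma$, where $M$ is a restricted maximal operator over balls of radius comparable to $r$; and crucially, since $f\in L^p$ and $p>Q$, a H\"older estimate on the maximal function over balls of radius $r$ in a $Q$-homogeneous space yields $\big(r^Q Mf^p(x)\big)^{1/p}\lesssim \|f\|_{L^p}$, i.e.\ $(Mf^p(x))^{1/p}\lesssim r^{-Q/p}\|f\|_{L^p}$. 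Combining, $\int_\gamma f\,ds\lesssim r\cdot r^{-Q/p}=r^{1-Q/p}$, which is exactly what is needed. For (A2) the $\infty$-Poincar\'e inequality gives the bound $\int_\gamma f\,ds\lesssim r\,\|f\|_{L^\infty}$ directly along a quasiconvex curve, producing the Lipschitz estimate.

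The implementation has two technical points I would address carefully. One is that the standard chaining lemma is stated for a measurable function that is an upper gradient of a \emph{given} Sobolev/Newtonian function $v$, and produces $|v(x)-v(y)|\lesssim r(Mf^p(x)+Mf^p(y))^{1/p}$; here I instead want a bound on the \emph{length of a curve}, not on an a priori function difference. I would handle this by either (a) running the telescoping argument directly: choose balls $B_i$ of radii $\sim 2^{-i}r$ chaining $x$ to $y$, use the Poincar\'e inequality on each to find points $z_i,z_{i+1}$ in consecutive balls joined by a curve of $d$-length $\lesssim 2^{-i}r$ with $f$-integral $\lesssim 2^{-i}r\,(Mf^p)^{1/p}$ (using quasiconvexity/length structure to realize such curves inside $\Oba$, which requires $\Oba$ to be, say, quasiconvex — a consequence of the Poincar\'e inequality together with completeness and doubling, cf.\ \cite{HKSTBook}), and concatenate; the total $f$-length is a geometric sum $\lesssim r(Mf^p(x)+Mf^p(y))^{1/p}$; or (b) apply the known estimate to the function $v=L_f(\cdot, y_0)$ for a fixed reference point, using that $f$ is (essentially) its upper gradient, but this is more delicate because $L_f$ is built from $f$ and one must verify the upper gradient relation. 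Option (a) is cleaner. The second point is that $Mf^p$ may be infinite on a set of measure zero, so the curve-length bound holds for $\mu$-a.e.\ pair $(x,y)$, but since $L_f$ and $d$ are continuous (the latter always, the former once \eqref{ol-uc} is known — which this very estimate provides for a.e.\ pair first, then upgrades by density), the inequality $L_f(x,y)\le C d(x,y)^{1-Q/p}$ extends to all $x,y\in\Oba$ by continuity.

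The main obstacle I anticipate is precisely this interplay: the maximal-function/chaining machinery lives naturally on measure-theoretic (Newtonian) objects and gives almost-everywhere statements, whereas $L_f$ is a genuinely pointwise, everywhere-defined quantity entering the Monge solution. Bridging the two — showing the a.e.\ curve-length estimate upgrades to an everywhere estimate on $L_f$, and ensuring all the curves produced by the Poincar\'e inequality can be taken inside $\Oba$ (quasiconvexity of $\Oba$, not just of $\X$) — is where the care is needed. Once $L_f\lesssim d^{1-Q/p}$ is in hand everywhere, combining with the $L_f$-Lipschitz bound on $u$ from Theorem \ref{exist}(i) finishes both assertions.
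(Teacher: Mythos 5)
Your overall strategy is the same as the paper's: reduce everything to the estimate $L_f(x,y)\le C d(x,y)^{1-Q/p}$ under (A1) (resp.\ $L_f(x,y)\le C\|f\|_{L^\infty}d(x,y)$ under (A2)) and then invoke the $L_f$-Lipschitz bound $|u(x)-u(y)|\le L_f(x,y)$ from \eqref{dpp-sub}. However, your implementation of the key estimate under (A1) has genuine gaps. First, the step ``the $p$-Poincar\'e inequality forces, along a good curve, $\int_\gamma f\,ds\lesssim r\,(Mf^p(x)+Mf^p(y))^{1/p}$'' is not a direct consequence of the Poincar\'e inequality: the PI is a statement about function/upper-gradient pairs and does not by itself produce curves with controlled line integrals of $f$. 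Converting it into such curves is precisely the content of the paper's Theorem \ref{thm doubling}: one either proves the modulus lower bound \eqref{modulus est} (by applying the Morrey-type embedding \eqref{Holder} to the auxiliary function $\eta\min\{v_\rho,2\}$, where $v_\rho(z)=\inf_\gamma\int_\gamma\rho\,ds$) and combines it with the fact that the family of curves with $\int_\gamma f\,ds=\infty$ has zero $p$-modulus, or, as in Proposition \ref{prop regular}, applies \eqref{Holder} directly to $v(\cdot)=L_f(x,\cdot)$, for which $f$ is an upper gradient simply by the triangle inequality for $L_f$ (measurability being the only point needing a citation). The route you set aside as ``more delicate'' (your option (b)) is in fact the paper's clean argument; your option (a) assumes away the hard step. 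Second, your maximal-function bookkeeping does not close: if $M$ ranges only over balls of radius comparable to $r$, the chaining toward $x$ and $y$ through balls of radii $2^{-i}r$ is not controlled; if $M$ ranges over all radii $\le r$, the bound $(Mf^p(x))^{1/p}\lesssim r^{-Q/p}\|f\|_{L^p}$ is false, since $\fint_{B_s(x)}f^p$ can be of order $s^{-Q}\|f\|_{L^p}^p$ for $s\ll r$. The correct mechanism (as in \eqref{Holder}) avoids maximal functions: bound each chained term by $r_i\,\mu(\lambda B_i)^{-1/p}\|f\|_{L^p}\lesssim r_i^{1-Q/p}R^{Q/p}\|f\|_{L^p}$ via \eqref{hdim} and sum the geometric series, which converges exactly because $p>Q$.

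Third, your plan to prove the estimate for $\mu$-a.e.\ pair and then ``upgrade by continuity/density'' is circular: at a bad point $x$ you have no control on $L_f(x,x')$ for nearby good points $x'$ (a priori not even finiteness, i.e.\ \eqref{c-integrable} at $x$), so the triangle inequality cannot transfer the bound, and continuity of $L_f$ in $d$ is precisely what is at stake in \eqref{ol-uc}. The paper's estimate holds at \emph{every} pair of points because when $p>Q$ singletons have positive $p$-capacity and hence every point is a Lebesgue point of the Newtonian function $v=L_f(x,\cdot)$; this everywhere-validity of \eqref{Holder} is what makes the argument work and cannot be recovered from an a.e.\ statement. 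Finally, under (A2) the phrase ``directly along a quasiconvex curve'' glosses over the fact that $f$ is only essentially bounded: one needs a quasiconvex curve transversal to the null set $\{f>\|f\|_{L^\infty}\}$, which is supplied by the geometric characterization of the $\infty$-Poincar\'e inequality, Theorem \ref{thm infty-regular}(3), not by quasiconvexity alone. Each of these gaps is repairable, but as written the proposal replaces the two load-bearing ingredients of the paper's proof (the modulus/Morrey estimate valid at every point, and the transversality statement for $\infty$-PI) with assertions that are either unproved or incorrect in the stated form.
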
 
Here, the $p$-Poincar\'e inequality is a condition that indicates the richness of curves of a space; see Section \ref{sec:admissible} for a more precise description and \cite{HKSTBook} for more detailed introduction. In our assumptions, a balance is taken between the regularity of the space $\Oba$ and the function $f$. In order to consider merely $p$-integrable $f$ with a finite $p$, we assume $p$-Poincar\'e inequality in (A1), which is  stronger than the $\infty$-Poincar\'e inequality in (A2),

Our results in Theorem \ref{thm holder} resemble the Morrey-Sobolev embedding theorem.  In the proof, we apply a geometric characterization of the Poincar\'e inequality provided by \cite{DJS1} 
for both cases of (A1) and (A2) with adaptations for our eikonal equation. We can also show that $u$ is in the Sobolev class $N^{1, p}(\Oba)$ under the conditions of Theorem \ref{thm holder}.  While (A1) and (A2) seem close to optimal for us to obtain \eqref{ol-uc}, both of them are actually too strong to directly show \eqref{c-integrable}, which is far weaker than \eqref{ol-uc} and simply requires the existence of one curve on which $f$ is integrable. We construct an example in $\R^2$, showing that \eqref{c-integrable} can be obtained even for some $f\notin L^2(\Omega)$; see Example~\ref{ex c-integrable}. It would be interesting to find more general sufficient conditions for \eqref{c-integrable}. 

It is worth remarking that the $L^p$ class in our study is only used to describe the extent of discontinuity of $f$ and cannot be understood as the usual $p$-integrable function space. Since the Monge solution obtained by \eqref{control formula} substantially depends on path integrals of $f$, changing, especially decreasing, the value of $f$ even on a null set will drastically affect the solution. In other words, our Monge solution strongly depends on 
the choice of representatives of the equivalence class for $f\in L^p(\Oba)$. 

Section \ref{sec:lp} is devoted to discussions on how to alleviate the instability of solutions with respect to $f$ increasing on null sets. Our method generalizes the approach in \cite{CaSi1, BrDa}. Instead of taking the solution for $L^p$ function $f$, we generate a class of solutions for different representatives from its equivalence class under the assumption (A1) or (A2). More precisely, for any null set $N\subset \Oba$, we can find a Monge solution of \eqref{eikonal}\eqref{dirichlet} with $f$ replaced by $f_N=f+\infty \chi_N$, where $\chi_N$ denotes the characteristic function of $N$. This change in $f$ amounts to restricting $f$ to be integrable only on curves transversal to $N$. Here, a curve is said to be transversal to $N$ if $\mathcal{H}^1(\gamma^{-1}(N))=0$, where $\mathcal{H}^1$ stands for the one-dimensional Hausdorff measure. By substituting $\Gamma_f(x, y)$ for each pair of points $x, y\in \Oba$ with the class of transversal curves
\begin{equation}\label{cf}
\begin{aligned}
{\Gamma}^{N}_f(x, y)=\bigg\{\gamma: [0,\ell]\to \Oba\ :  &\  \int_\gamma f\, ds<\infty,\, \gamma(0)=x, \gamma(\ell)=y\ \text{ and }\\
&  |\gamma'|(s)=1 \text{ for a.e. }s, \ \text{with}\ \mathcal{H}^1(\gamma^{-1}(N))=0 \bigg\}
\end{aligned}
\end{equation}
and taking the associated optical length function 
\begin{equation}\label{olfN}
L^N_f(x, y)=\inf  \left\{\int_\gamma f\, ds\,:\, \gamma\in {\Gamma}_f^{N}(x,y)\right\},
\end{equation}
we can obtain the same existence and uniqueness results for any given null set $N$. 

We are particularly interested in the maximal solution over all null sets $N$, as it corresponds to the choice of the largest function in the equivalence class of $f$ and represents the strongest possible instability. It can be defined by 
\begin{equation}\label{lax2}
\tilde{u}(x)=\inf_{y\in \partial \Omega} \{\tilde{L}_f(x, y)+g(y)\}, \quad x\in \overline{\Omega},
\end{equation}
where $\tilde{L}_f$ is the maximal optical length function over all $N$, that is, 
\begin{equation}\label{olf}
\tilde{L}_f(x,y)=\sup_{\mu(N)=0} L_f^N(x, y)=\sup_{\mu(N)=0}\inf  \left\{\int_\gamma f\, ds\,:\, \gamma\in {\Gamma}_f^{N}(x,y)\right\}.
\end{equation}
We proved in Proposition \ref{maxLip} and Remark \ref{rmk weak sol} that $\tilde{u}$ is the maximal weak solution of \eqref{eikonal},\,\eqref{dirichlet} under 
(A2) together with appropriate assumptions on $\Oba$ and boundary data $g$ as well as almost everywhere continuity of $f$. Here, the notion of weak solutions is a generalization of the Euclidean counterpart, requiring $\tilde{u}$ to satisfy $|\nabla \tilde{u}|=f$ almost everywhere in $\Omega$, where $|\nabla \tilde{u}|$ denotes the slope of $u$, i.e., for $x\in \Omega$,
\[
|\nabla \tilde{u}|(x):=\limsup_{d(x, y)\to 0} \frac{|\tilde{u}(y)-\tilde{u}(x)|}{d(x,y)}.
\] 
This result extends the discussion about the well-known maximal weak solution characterization of viscosity solutions in the Euclidean space \cite{NeSu, BrDa} to the setting of metric measure spaces and possibly unbounded discontinuous inhomogeneous term $f$. It is however not clear to us how to handle the case when $f$ is not continuous almost everywhere. 
Another important open question is under what assumptions can we can eliminate the instability, i.e., $u=\tilde{u}$ in $\Oba$. Some discussions regarding this issue can be found in~\cite{Sor} for the Euclidean space but the case for general metric spaces seems more difficult. 

Let us conclude the introduction by mentioning that in this work we choose not to pursue more general Hamilton-Jacobi equations.  One can easily generalize our method if a control-based formula is available and a new metric incorporating the discontinuity of the Hamiltonian can be found.  Even for the eikonal equation itself, our general setup actually implicitly allows more general dependence of the Hamiltonian on the space variable. Typical examples of $\X$ include the sub-Riemannian manifolds. For instance, when $(\X, d)$ is taken to be the first Heisenberg group with the Carnot-Carath\'eodory metric, the eikonal equation \eqref{eikonal} can be written in the Euclidean coordinates as
\[
\sqrt{\left(u_x-{y\over 2}u_z\right )^2+\left(u_y+{x\over 2}u_z\right)^2}=f(x, y, z) \quad \text{for $(x, y, z)\in \R^3$.}
\]
We refer to \cite{Bi4, Alb2, ACS} etc. for discussions on the eikonal equation with $f\equiv 1$ in the sub-Riemannian setting.

\subsection*{Acknowledgements}
The authors thank Professors Yoshikazu Giga, Nao Hamamuki and Atsushi Nakayasu for valuable comments on the first draft of the paper.
The work of the first author was supported by JSPS Grant-in-Aid for Scientific 
Research (No.~19K03574, No.~22K03396).  
The work of the second author is partially supported by
the grant DMS-\#2054960 of the National Science Foundation (U.S.A.). 
The work of the third author was supported by JSPS Grant-in-Aid for Research Activity Start-up (No.~20K22315) and JSPS Grant-in-Aid for Early-Career Scientists (No.~22K13947).

\section{Optical length function}\label{sec:optical}

The notion of optical length dates back to the paper of Houstoun~\cite{Hous}, see also~\cite[(3)]{SeKe} in the context
of the behavior of light rays in general relativity. Briani and Davini \cite{BrDa} use this notion to define the Monge solution for discontinuous Hamiltonians in the Euclidean space. See recent developments of this approach on Carnot groups in \cite{EGV}. In the context of metric measure spaces, the notion of optical length is 
constructed in~\cite{Che}, but there the terminology of optical length was not used. Other applications can be found in \cite{HKSTBook, JJRRS}.

We define the optical length function  $L_f: \Oba\times \Oba\to \R$ as in \eqref{ol-fun}. Hereafter,  we sometimes adopt the notation $I_f(\gamma):=\int_\gamma f\, ds$ for a rectifiable curve $\gamma$.
Under our standing assumption~\eqref{c-integrable}, 
it is not difficult to see that $L_f$ satisfies all the axioms of a metric on $\Oba$. 
\begin{lem}[Metric properties of optical length]\label{lem metric-like}
Let $(\X, d)$ be a complete length space and $\Omega\subsetneq\X$ be a bounded domain. Assume that $f$ satisfies \eqref{f-lower}.  Let $L_f$ be the optical length function defined by \eqref{ol-fun}. Assume in addition that \eqref{c-integrable} holds. Then $L_f$ satisfies the following: 
\begin{enumerate}
\item[(1)] $L_f(x, y)\geq 0$ for all $x, y\in \Oba$, and $L_f(x, y)=0$ holds if and only if $x=y$;
\item[(2)] $L_f(x, y)=L_f(y, x)$ for all $x, y\in \Oba$;
\item[(3)] $L_f(x, y)\leq L_f(x, z)+L_f(y, z)$ for all $x, y, z\in \Oba$.
\end{enumerate}
Moreover, 
\begin{equation}\label{lower lip}
L_f(x, y)\geq  \alpha d(x, y)\quad  \text{for all $x, y\in \Omega$.} 
\end{equation}
\end{lem}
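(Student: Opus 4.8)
\textbf{Proof proposal for Lemma \ref{lem metric-like}.}

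The plan is to verify the three metric axioms and the lower bound \eqref{lower lip} directly from the definition \eqref{ol-fun}, using the standing assumption \eqref{c-integrable} to ensure that the infimum in \eqref{ol-fun} is taken over a nonempty set, hence $L_f$ is finite-valued. Nonnegativity in (1) is immediate because $f>0$ along any admissible curve; the identity $L_f(x,x)=0$ follows by taking the constant curve, and the converse implication $L_f(x,y)=0\Rightarrow x=y$ will be deduced from the lower bound \eqref{lower lip}, so I would in fact prove \eqref{lower lip} first and then harvest this part of (1) as a corollary. Symmetry (2) follows from the fact that reversing the parametrization of an admissible unit-speed curve $\gamma\in\Gamma_f(x,y)$ yields an admissible unit-speed curve in $\Gamma_f(y,x)$ with the same path integral $I_f$, so the two infima coincide. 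The triangle inequality (3) is the usual curve-concatenation argument: given $\varepsilon>0$, pick near-optimal admissible curves $\gamma_1\in\Gamma_f(x,z)$ and $\gamma_2\in\Gamma_f(y,z)$; reversing $\gamma_2$ and concatenating with $\gamma_1$ produces a curve from $x$ to $y$ lying in $\overline\Omega$ with finite $f$-integral equal to $I_f(\gamma_1)+I_f(\gamma_2)$, which after unit-speed reparametrization is admissible, whence $L_f(x,y)\le L_f(x,z)+L_f(y,z)+2\varepsilon$; let $\varepsilon\to0$.

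The one genuinely substantive point is \eqref{lower lip}. For any $\gamma\in\Gamma_f(x,y)$ parametrized by arclength on $[0,\ell]$ we have, using $f\ge\alpha$ from \eqref{f-lower} and the length-space structure,
\[
I_f(\gamma)=\int_\gamma f\,ds=\int_0^\ell f(\gamma(s))\,ds\ \ge\ \alpha\,\ell\ \ge\ \alpha\,d(x,y),
\]
since the length $\ell$ of any curve joining $x$ and $y$ is at least $d(x,y)$. Taking the infimum over $\gamma\in\Gamma_f(x,y)$ gives \eqref{lower lip}. (The restriction to $x,y\in\Omega$ rather than $\overline\Omega$ is presumably a conservative statement of what is needed later; the same computation works verbatim for $x,y\in\overline\Omega$, since all curves in question lie in $\overline\Omega\subset\X$ and the ambient distance $d$ controls curve length.) Then, if $L_f(x,y)=0$, \eqref{lower lip} forces $d(x,y)=0$, i.e.\ $x=y$, completing (1).

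I do not anticipate a serious obstacle here; the only things to be careful about are (a) checking that the concatenated and reversed curves can indeed be reparametrized to unit speed without changing $I_f$ — this is standard since $I_f$ is defined as a parametrization-independent path integral and the curves are rectifiable with finite integral — and (b) making sure \eqref{c-integrable} is invoked so that $L_f(x,y)<\infty$ and the infima are over nonempty families, so that the ``$\inf$'' manipulations in the triangle inequality are legitimate rather than vacuous. The measurability of $s\mapsto f(\gamma(s))$ and finiteness of the integral are built into the definition of $\Gamma_f(x,y)$, so no extra regularity argument is required.
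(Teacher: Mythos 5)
Your proposal is correct and follows essentially the same route as the paper: the lower bound \eqref{lower lip} from $f\ge\alpha$ together with the fact that curve length dominates $d(x,y)$, which then gives the nondegeneracy part of (1); symmetry by curve reversal; and the triangle inequality by concatenating $\varepsilon$-near-optimal curves and letting $\varepsilon\to 0$. The paper's proof is just a terser version of the same argument (it declares (1) and (2) clear and only writes out the concatenation step), so your filled-in details, including the constant-curve observation for $L_f(x,x)=0$, are exactly the intended ones.
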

\begin{proof}
By \eqref{f-lower}, it is clear that $L_f\geq 0$ in $\Oba\times \Oba$ and in addition, $L_f(x,y)=0$ if and only if $x=y$. 
Moreover, the definition \eqref{ol-fun} immediately implies \eqref{lower lip}.
This completes the proof of (1). The statement (2) is obvious. We finally prove (3), which is also quite straightforward. 
For any $\vep>0$, there exist $\gamma_1\in \Gamma_f(x, z)$ and $\gamma_2\in \Gamma_f(z, y)$ such that 
\[
L_f(x, z)\geq I_f(\gamma_1)-\vep, \quad L_f(z, y)\geq I_f(\gamma_2)-\vep. 
\]
By connecting the curve $\gamma_1$ and $\gamma_2$ to build a curve joining $x$ and $y$, we can easily see that 
\[
L_f(x, y)\leq I_f(\gamma_1)+I_f(\gamma_2)\leq L_f(x, z)+L_f(z, y)+2\vep.
\]
We conclude the proof by sending $\vep\to 0$.
\end{proof} 

\begin{rmk}
Under the assumption \eqref{c-integrable}, using the metric $L_f$, 
we can introduce a new notion of length of curves in $\Oba$. For any curve $\gamma: [a, b]\to \Oba$, define
\[
\ell_f(\gamma)=\sup_{a=t_0<t_1<\cdots<t_k=b} \sum_{j=0}^{k-1}L_f(\gamma(t_j),\gamma(t_{j+1})).
\]
We note that when $f=1$ in $\Oba$, $L_f$ defines the intrinsic metric and $\Oba$ equipped with this metric is a length space. 
For a general function $f$ one can still verify that $\Oba$ is a length space with metric $L_f$ and the length 
of curves in this metric is given by $\ell_f$ 
defined above. 
\end{rmk}


As an immediate consequence of \eqref{lower lip}, the closure/boundary of $\Omega$ with respect to $L_f$ is contained in the closure/boundary with respect to $d$.
We can obtain the bi-Lipschitz equivalence between $d$ and $L_f$ if a reverse version of \eqref{lower lip} holds. It is the case when $f$ is uniformly bounded. However we cannot expect the bi-Lipschitz equivalence when $f$ is unbounded. A simple example is as follows. 

\begin{example}\label{ex no-bilip}
Let $\X=\R$ with $d$ the Euclidean metric and $\mu$ the one-dimensional Lebesgue measure. Let $\Omega=(-1, 1)$ 
and $f$ be the function given by \eqref{eq no-bilip1}. In this case, we have
\[
L_f(x, 0)=\int_0^x f(s)\, ds=2\sqrt{|x|}
\]
for all $x\in (-1, 1)$ and therefore
\[
\frac{L_f(x, 0)}{d(x, 0)}=\frac{2}{\sqrt{|x|}}\to \infty\quad \text{as $x\to 0$.}
\]
Note however that $(-1,1)$ is bounded with respect to the metric $L_f$, and the topology generated by $L_f$ 
agrees with the Euclidean topology on $(-1,1)$; moreover, $[-1,1]$ is complete and compact in both metrics.
\end{example}

One can further consider the case when $L_f$ is unbounded in $\Oba$; in fact, in Example \ref{ex no-bilip}, 
replacing $f$ with the following function 
\[
f(x)=\frac{1}{|x|},  \quad x\in (-1, 1)\setminus\{0\},
\]
we see that $L_f(0, x)=\infty$ for any $x\in (-1, 1)\setminus\{0\}$. Observe that this choice of $f$ does not belong to $L^p((-1,1))$ for any $p\ge 1$.
Since such situation is not our focus in this work, we do not pursue this direction here and leave it for future discussions.

Even if $L_f$ is bounded in $\Oba\times \Oba$, due to the unboundedness of $f$ in general it may happen that 
$L_f(x, y)\not\to 0$ as $d(x, y)\to 0$. 
An example on metric graphs is as follows. 

\begin{example}\label{ex singular}
In $\R^2$, let $e_0$ be the closed line segment between points $(0, 0)$ and $(1, 0)$, and $e_j$ be the line 
segments connecting $P_j=(1/j, 0)$ and $Q_j=(1/j, 1/j)$ for $j=1, 2, \ldots$ We take 
$\X=\Oba=\bigcup_{j=0, 1, \ldots} e_j$ with $d$ being the intrinsic metric of this graph. See Figure~\ref{fig:comb}.
\begin{figure}[H]
  \centering
  \includegraphics[height=5.2cm]{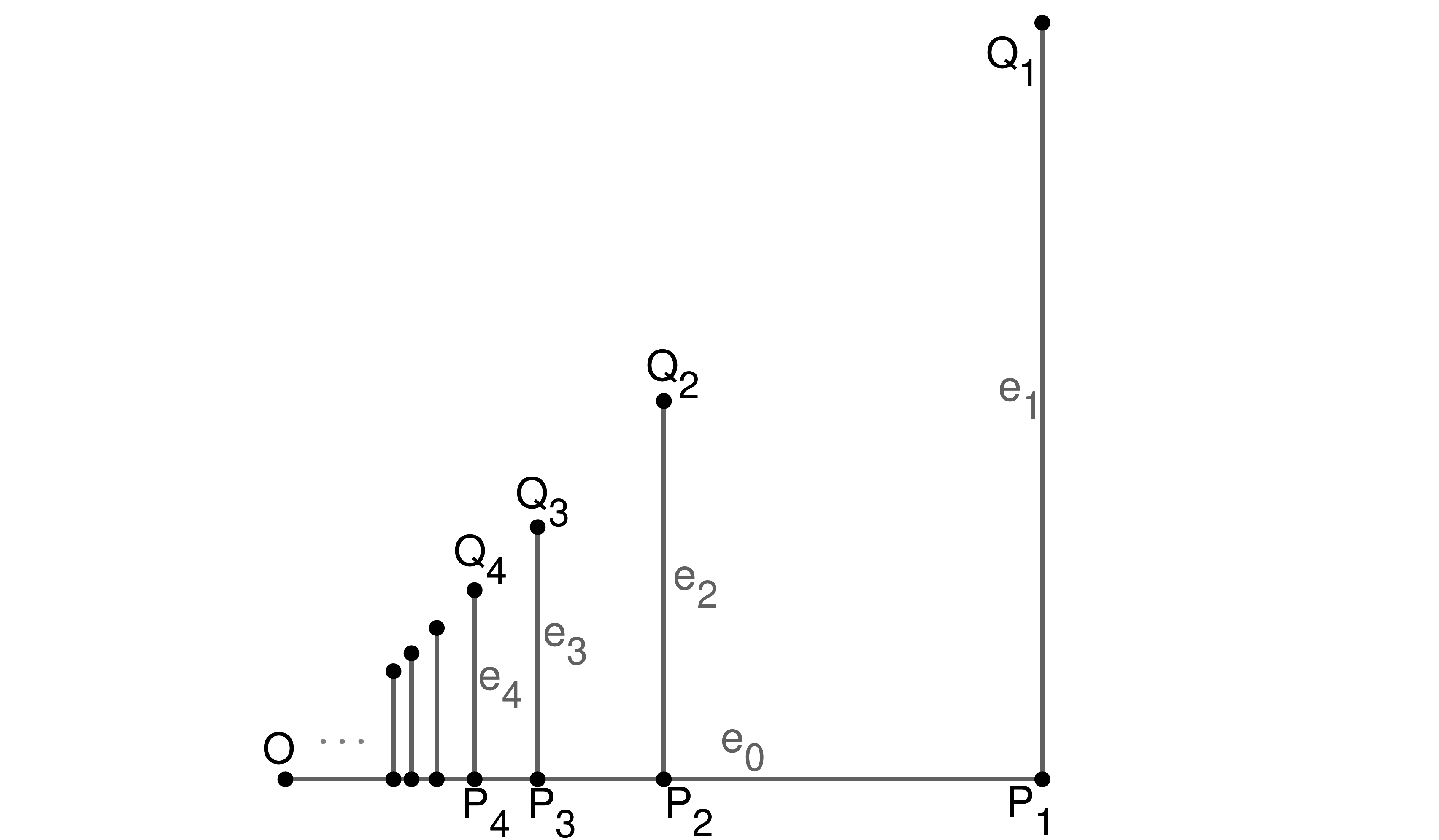}
  \caption{Example \ref{ex singular}}
  \label{fig:comb}
 \end{figure}
It is not difficult to see that $(\X, d)$ is a complete geodesics space. For $x\in \X$ with coordinates $(x_1, x_2)$ in $\R^2$, let 
\[
f(x_1, x_2)=\begin{dcases}
1/x_1 &\text{if $x_2> 0$;}\\
1 & \text{if $x_2=0$.}
\end{dcases}
\]
Denoting $O=(0, 0)$, by direct calculations, for any $j\geq 2$ we have
\[
L_f(O, Q_j)=j \int_0^{1\over j}  \, ds+\int_0^{1\over j} ds=1+{1\over j}. 
\]
(Here we choose the optimal integration path from $Q_j$ to $P_j$ and then to $O$.) 
We thus observe that as $j\to \infty$, $d(O, Q_j)\to 0$ but $L_f(O, Q_j)\to 1$. 

\end{example}

\begin{lem}[Completeness]\label{lem complete}
Let $(\X, d)$ be a complete length space and $\Omega\subsetneq\X$ be an open set. Assume that $f$ satisfies \eqref{f-lower}.  Let $L_f$ be the optical length function defined by \eqref{ol-fun}. Then $(\Oba, L_f)$ is complete. 
\end{lem}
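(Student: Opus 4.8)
The plan is to take a Cauchy sequence $\{x_j\}$ in $(\Oba, L_f)$ and show it converges in $L_f$ to some point of $\Oba$. The first step uses the lower bound \eqref{lower lip} from Lemma \ref{lem metric-like}: since $L_f(x_j, x_k)\geq \alpha\, d(x_j, x_k)$ whenever the points lie in $\Omega$ (and a similar inequality persists on $\Oba$ by taking closures), $\{x_j\}$ is also Cauchy in $(\Oba, d)$. Because $(\X, d)$ is complete and $\Oba$ is closed in $\X$, there is a point $x_0\in \Oba$ with $d(x_j, x_0)\to 0$. It remains to upgrade this to convergence in the $L_f$-metric, i.e.\ to show $L_f(x_j, x_0)\to 0$.

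**The key estimate.** The heart of the argument is to bound $L_f(x_j, x_0)$ using only the Cauchy property in $L_f$ together with $d$-convergence, without assuming any topological equivalence of the two metrics. Fix $\vep>0$ and choose $J$ so large that $L_f(x_j, x_k)<\vep$ for all $j, k\geq J$. For each such $j$, the idea is to realize $x_0$ as a $d$-limit of the $x_k$ and pass to the limit in $k$ inside the quantity $L_f(x_j, x_k)$. This requires a lower semicontinuity property of $y\mapsto L_f(x_j, y)$ with respect to $d$-convergence: if $d(x_k, x_0)\to 0$, then $L_f(x_j, x_0)\leq \liminf_{k\to\infty} L_f(x_j, x_k)\leq \vep$. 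Since $\vep$ is arbitrary, this gives $L_f(x_j, x_0)\to 0$ along $j\to\infty$, completing the proof.

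**The main obstacle.** The delicate point is establishing this $d$-lower semicontinuity of $L_f(x_j, \cdot)$, since $L_f$ is defined as an infimum of path integrals over the class $\Gamma_f(x_j, y)$ of finite-integral unit-speed curves, and shrinking $d(x_k, x_0)$ does not obviously let one modify a near-optimal curve for $(x_j, x_k)$ into an admissible curve for $(x_j, x_0)$ of comparable cost — indeed Example \ref{ex singular} shows that a short $d$-bridge from $x_k$ to $x_0$ can carry $L_f$-length bounded away from $0$. The way around this is \emph{not} to bridge directly but to exploit that, for the Cauchy sequence, $L_f(x_j, x_k)$ is itself small: one selects for each pair a curve $\gamma_{j,k}\in\Gamma_f(x_j, x_k)$ with $I_f(\gamma_{j,k})< L_f(x_j,x_k)+2^{-k}$, so that these curves have uniformly small $f$-length for $j,k\geq J$; using $\alpha\,(\text{$d$-length of }\gamma_{j,k})\leq I_f(\gamma_{j,k})$, their $d$-lengths are uniformly controlled. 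One can then argue that the endpoints $x_k$ lie in a set that is "$L_f$-totally bounded near $x_0$", or more directly, extract a subsequence of the $\gamma_{J,k}$ converging (by an Arzelà–Ascoli argument in $(\X,d)$, using equi-boundedness of $d$-lengths) to a curve $\gamma$ from $x_J$ to $x_0$; lower semicontinuity of the path integral $I_f$ along $d$-uniformly convergent curves of bounded length — which follows from Fatou's lemma after reparametrization, the standard tool in this curve-integral setting — yields $I_f(\gamma)\leq \liminf_k I_f(\gamma_{J,k})\leq\vep$, hence $L_f(x_J,x_0)\leq\vep$, and likewise $L_f(x_j,x_0)\leq\vep$ for all $j\geq J$. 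Getting the lower semicontinuity of $I_f$ along $d$-convergent curves to work cleanly — in particular handling reparametrization to a common interval and justifying the Fatou step for the Borel measurable, possibly unbounded integrand $f$ — is the part that needs the most care; everything else is routine.
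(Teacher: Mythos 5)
Your reduction to $d$-convergence (Cauchy in $L_f$ implies Cauchy in $d$ via \eqref{lower lip}, hence a limit $x_0\in\Oba$) matches the paper, but the step that upgrades $d$-convergence to $L_f$-convergence has two genuine gaps. First, the Arzel\`a--Ascoli extraction of a $d$-uniformly convergent subsequence of the near-optimal curves $\gamma_{J,k}$ requires pointwise relative compactness, i.e.\ that closed bounded subsets of $(\X,d)$ be compact. The hypotheses here give only a complete length space (and $\Omega$ is merely open in this lemma); no properness or local compactness is assumed, and bounded sets in such spaces need not be totally bounded, so equi-boundedness of the $d$-lengths does not yield a convergent subsequence of curves. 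Second, even granting a limit curve $\gamma$, the claimed lower semicontinuity $I_f(\gamma)\le\liminf_k I_f(\gamma_{J,k})$ is false for a merely Borel measurable integrand: Fatou's lemma would need $f(\gamma_{J,k}(t))\to f(\gamma(t))$ pointwise a.e., which fails when $f$ is discontinuous. Concretely, in $\R^2$ take $f=1+M\chi_A$ with $A$ a line segment and curves converging uniformly to a curve inside $A$ from outside $A$; then $I_f(\gamma)\approx(1+M)\ell$ while $I_f(\gamma_{J,k})\approx\ell$. Lower semicontinuity of $\gamma\mapsto\int_\gamma f\,ds$ under uniform convergence holds for lower semicontinuous $f$, but here $f$ is only a positive Borel function, so this ``standard tool'' is not available.

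The paper avoids both difficulties by never passing to a limit of curves. Given $\vep>0$, it extracts a rapidly Cauchy subsequence with $L_f(x_{j_k},x_{j_{k+1}})<2^{-k}\vep$, chooses curves $\gamma_k\in\Gamma_f(x_{j_k},x_{j_{k+1}})$ with $\int_{\gamma_k}f\,ds<2^{-k}\vep$, and concatenates them. Since $f\ge\alpha>0$, the $d$-lengths satisfy $\ell(\gamma_k)\le 2^{-k}\vep/\alpha$ and are summable, so the concatenation is a single rectifiable curve whose terminal point is exactly $x_0$ (the partial endpoints $x_{j_{k+1}}$ converge to $x_0$ in $d$ and the tails have vanishing length); its $f$-integral is the sum $\le\vep$ by countable additivity of the path integral, giving $L_f(x_{j_1},x_0)\le\vep$ directly and then $L_f(x_j,x_0)\to 0$ by the triangle inequality. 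If you want to rescue your scheme, you should replace the compactness-plus-semicontinuity argument by this explicit concatenation; as written, the proposal rests on two steps that fail under the stated hypotheses.
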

\begin{proof}
Suppose that $x_j \in \Oba$ is a Cauchy sequence with respect to the metric $L_f$. Thanks to \eqref{lower lip} in Lemma \ref{lem metric-like}, we see that $\{x_j\}$ is also a Cauchy sequence with respect to the metric $d$. Since $\Oba$ is a closed set in the complete space $\X$, there exists $x_0\in \Oba$ such that $d(x_j, x_0)\to 0$ as $j\to \infty$.  \\
By definition of Cauchy sequences, for any $\vep>0$, we can take $x_{j_1}$ such that $L_f(x_{j_1}, x_i)<\vep/2$ for all $i\geq j_1$. Similarly, we can choose $x_{j_2}$ with $j_2>j_1$ satisfying $L_f(x_{j_2}, x_i)<\vep/4$ for all $i\geq j_2$. We repeat this process to obtain a sequence $x_{j_k}$ such that $L_f(x_{j_k}, x_{j_{k+1}})<2^{-k}\vep$. By definition of $L_f$, we can find curves $\gamma_k$ in $\Oba$ joining $x_{j_k}$ and $x_{j_k+1}$ such that 
\[
\int_{\gamma_k} f\, ds< 2^{-k}\vep. 
\]
Concatenating these curves in order, we build a curve $\gamma$ connecting $x_{j_1}$ to $x_0$ satisfying 
\[
\int_\gamma f\, ds\leq \sum_{k\geq 1} 2^{-k}\vep=\vep,
\]
which yields $L_f(x_{j_1}, x_0)\leq \vep$. In view of the arbitrariness of $\vep>0$, we have actually proved the convergence of $x_j$ to $x_0$ in the metric $L_f$. 
\end{proof}

We cannot guarantee the compactness of $(\overline{\Omega}, L_f)$ without imposing further assumptions. 
Indeed, in Example \ref{ex singular} we see that the sequence $\{Q_j\}_{j\geq 2}$ is bounded but without a convergent subsequence in $(\overline{\Omega},L_f)$; indeed, for each $j\ne k$ we have that 
$L_f(Q_j, Q_k)\ge 2$. It follows that $(\overline{\Omega},L_f)$ is not compact.

\section{Monge Solutions and Comparison Principle}\label{sec:comparison}

\subsection{Definition of Monge solutions}

Let us now study the Dirichlet problem \eqref{eikonal},\,\eqref{dirichlet}. We begin with the definition of Monge solutions to \eqref{eikonal}. 

\begin{defi}[Monge solutions] \label{Monge def}
We say that a locally bounded  function $u: \Omega\to \R$  is a Monge solution (resp. subsolution, supersolution) to 
\eqref{eikonal} in $\Omega$ if for every $x_0\in \Omega$,
\begin{equation}\label{monge-sub}
\limsup_{L_f(x, x_0)\to 0}\frac{u(x_0)-u(x)}{L_f(x_0, x)}= 1 \quad (\text{resp., } \leq, \geq).
\end{equation}
\end{defi}

We stress that the topology in the local boundedness of $u$ and limsup in the definition above is taken with respect to the metric $L_f$. Since $L_f(x, x_0)\to 0$ implies $d(x, x_0)\to 0$ by \eqref{lower lip}, we see that 
\begin{equation}\label{monge-sol}
\limsup_{L_f(x, x_0)\to 0}\frac{u(x_0)-u(x)}{L_f(x_0, x)}= 1\quad \text{for every $x_0\in \Omega$}
\end{equation}
 implies 
\begin{equation}\label{monge d-metric1}
\limsup_{d(x, x_0)\to 0}\frac{u(x_0)-u(x)}{L_f(x_0, x)}\geq 1\quad\text{for every $x_0\in \Omega$}.
\end{equation}
In general, we cannot expect that the reverse inequality of \eqref{monge d-metric1} holds without further assumptions like \eqref{ol-uc}. 


We also emphasize that our definition does not require any continuity or semicontinuity of $u$. 
This relaxation constitutes a difference from the standard definition of discontinuous viscosity solutions. Recall that when defining a (possibly discontinuous) subsolution $u$, we usually assume $u\in USC(\Omega)$ (and $u\in LSC(\Omega)$ for the symmetric supersolution definition). Here, $USC(\Omega)$ and $LSC(\Omega)$ denote the classes of upper and lower semicontinuous functions in $\Omega$ respectively with respect to $d$. 
We do not adopt such restrictions here. 

\begin{rmk}\label{rmk cont1}
Our definition above is
 a direct generalization of the standard notion of  
Monge solutions for $f\in C(\Omega)$. Recall that $\X$ is assumed to be a length space.
When $f$ is continuous at $x_0$, we can show that~\eqref{length cont} holds
and that $L_f(x,x_0)\to 0$ as $d(x,x_0)\to 0$. Our generalized notion is consistent with the case for continuous $f$ discussed in \cite{LShZ}.
\end{rmk}

Using the subslope defined in \eqref{new subslope}, we can see in a straightforward manner that our Monge 
solutions (resp., subsolutions, supersolutions) of~\eqref{eikonal} can simply be understood as Monge solutions (resp., subsolutions, supersolutions) of 
\begin{equation}\label{eikonal new}
|\nabla_f u|=1 \quad \text{in $\Omega$}.
\end{equation}




\begin{prop}
Let $(\X, d)$ be a complete length space and $\Omega\subsetneq\X$ be a bounded domain. 
Assume that \eqref{f-lower} hold. If $u$ is a Monge 
subsolution of \eqref{eikonal}, then for any $x_0\in \Omega$ and any $C>1$,  
\[
u(x)-u(x_0)\geq -CL_f(x, x_0)
\]
for any $x\in \Omega$ sufficiently close to $x_0$. In addition, if \eqref{ol-uc} holds, then $u$ is lower semicontinuous in $\Omega$. 
\end{prop}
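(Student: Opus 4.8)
The plan is to extract quantitative information directly from the subsolution inequality \eqref{monge-sub}. Fix $x_0 \in \Omega$ and $C > 1$. Since $u$ is a Monge subsolution, we have
\[
\limsup_{L_f(x, x_0)\to 0}\frac{u(x_0)-u(x)}{L_f(x_0, x)}\le 1 < C.
\]
By the definition of $\limsup$ with respect to the metric $L_f$, this means there exists $\rho > 0$ such that for every $x \in \Omega$ with $0 < L_f(x, x_0) < \rho$ one has
\[
\frac{u(x_0)-u(x)}{L_f(x_0, x)} \le C,
\]
equivalently $u(x) - u(x_0) \ge -C L_f(x, x_0)$. This inequality also holds trivially when $x = x_0$. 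So the first assertion holds for all $x \in \Omega$ with $L_f(x, x_0) < \rho$; in particular it holds for all $x$ in the $d$-ball $B_\delta(x_0)$ provided $\delta$ is chosen so small that $d(x, x_0) < \delta$ forces $L_f(x, x_0) < \rho$ — but this last implication is exactly the kind of statement one cannot make in general without \eqref{ol-uc}. The cleanest way to phrase the first conclusion is therefore: it holds for all $x$ in the $L_f$-ball of radius $\rho$ about $x_0$, which by \eqref{lower lip} is contained in the $d$-ball $B_{\rho/\alpha}(x_0)$, so it does hold for $x$ in some $d$-neighborhood of $x_0$ intersected with the region where $L_f(x,x_0)$ is small; under \eqref{ol-uc} this region is a genuine $d$-neighborhood.

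For the lower semicontinuity statement, assume \eqref{ol-uc} holds and let $x_j \to x_0$ in the metric $d$, with $x_j \in \Omega$. By \eqref{ol-uc}, $L_f(x_j, x_0) \to 0$ as $j \to \infty$ (this is precisely the content of \eqref{ol-uc} specialized to the pair $x_j, x_0$). Hence for $j$ large enough, $L_f(x_j, x_0) < \rho$, so the first part gives $u(x_j) - u(x_0) \ge -C L_f(x_j, x_0)$. Letting $j \to \infty$ and using $L_f(x_j, x_0) \to 0$ yields $\liminf_{j\to\infty} u(x_j) \ge u(x_0)$. Since the sequence $\{x_j\}$ converging to $x_0$ in $d$ was arbitrary, $u$ is lower semicontinuous at $x_0$ in the metric $d$, and since $x_0 \in \Omega$ was arbitrary, $u \in LSC(\Omega)$.

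The only genuine subtlety — and the place where \eqref{ol-uc} is indispensable — is converting $d$-convergence to $L_f$-convergence in the second part; the first part is essentially unwinding the definition of the limsup. I would also note that one must be slightly careful that the threshold $\rho = \rho(x_0, C)$ depends on the point $x_0$, which is harmless here since lower semicontinuity is a pointwise (local) property; no uniformity in $x_0$ is needed. A final small point: the first conclusion as literally stated ("for any $x \in \Omega$ sufficiently close to $x_0$") is most honestly read as closeness in $L_f$, or equivalently — given \eqref{lower lip} — closeness in $d$ within the subset where $L_f(\cdot, x_0)$ is controlled; if one wants a true $d$-neighborhood statement without qualification, \eqref{ol-uc} must be invoked there too, but for the stated proposition the $L_f$-reading suffices for part one and \eqref{ol-uc} is explicitly assumed for part two.
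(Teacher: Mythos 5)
Your argument is correct: unwinding the $\limsup$ in Definition \ref{Monge def} to get the threshold $\rho$ (closeness measured in $L_f$, which is indeed the topology the definition uses), and then invoking \eqref{ol-uc} only to convert $d$-convergence into $L_f$-convergence for the lower semicontinuity claim, is exactly the intended reasoning; the paper states this proposition without proof precisely because it is this routine unwinding. Your side remark that the first conclusion should be read as $L_f$-closeness (a genuine $d$-neighborhood statement needing \eqref{ol-uc}) is an accurate reading of the statement, consistent with the paper's emphasis that the topology in Definition \ref{Monge def} is that of $L_f$.
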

On the other hand, Definition \ref{Monge def}, together with \eqref{ol-uc}, does not guarantee the upper semicontinuity of a Monge solution. However, for Monge solutions that arise from the associated Dirichlet problem, we will later use its optimal control interpretation to show the continuity of solutions.

Thanks to the metric change, 
one can essentially apply the results in \cite{LShZ} to \eqref{eikonal new} in the complete
metric space $(\Oba, L_f)$ to show both uniqueness and existence of Monge solutions of \eqref{eikonal}\eqref{dirichlet} under appropriate assumptions on the boundary data $g$. 
The compatibility condition \eqref{growth} we will impose on $g$, combined with \eqref{ol-uc}, basically ensures that the topology stays equivalent when converting the metric from $d$ to $L_f$.

\subsection{Comparison principle}\label{sec:comparison-proof}
Let us prove a comparison principle, where we assume the semicontinuity of Monge sub- and supersolutions with respect to the metric $L_f$. We say that $u: \Oba\to \R$ is upper (resp., lower) semicontinuous with respect to $L_f$, denoted by $u\in USC_{L}(\Oba)$ (resp., $u\in LSC_{L}(\Oba)$), if for every fixed $x\in \Oba$, we have
\[
\limsup_{L_f(x, y)\to 0,\ y\in \Oba}\ u(y)\leq u(x) \quad \left(\text{resp.,} \liminf_{L_f(x, y)\to 0,\ y\in \Oba}\ u(y)\geq u(x)\right).
\]
It is not difficult to see that $u\in USC_{L}(\Oba)$ (resp., $u\in LSC_{L}(\Oba)$) if $u\in USC(\Oba)$ (resp., $u\in LSC(\Oba)$) with respect to $d$, since $L_f(x, y)\to 0$ implies that $d(x, y)\to 0$.


\begin{thm}[Comparison principle for Monge solutions]\label{thm comparison monge}
Let $(\X, d)$ be a complete length space and $\Omega\subsetneq\X$ be a bounded domain. Assume 
that $f$ satisfies \eqref{f-lower}. 
Let $u \in USC_{L}(\Oba)$ and $v \in LSC_{L}(\Oba)$ be respectively a bounded Monge subsolution and a bounded Monge supersolution of \eqref{eikonal}. If 
\begin{equation}\label{bdry verify monge}
\lim_{\delta\to 0}\sup\left\{u(x)-v(x): x\in \Oba, \ d(x, \pO)\leq \delta \right\}\leq 0, 
\end{equation}
then $u\le v$ holds in $\Oba$.
\end{thm}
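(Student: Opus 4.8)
The plan is to reduce the comparison principle to the constant right-hand side equation $|\nabla_f u| = 1$ on the complete metric space $(\Oba, L_f)$, and then mimic the Ekeland variational principle argument from \cite{LShZ}. First I would recall that, by Lemma \ref{lem complete}, $(\Oba, L_f)$ is complete, so Ekeland's variational principle is available even without compactness. Suppose for contradiction that $\sup_{\Oba}(u-v) =: m > 0$. The boundary hypothesis \eqref{bdry verify monge} tells us that the supremum is not attained near $\pO$ in the $d$-metric (hence, since $L_f(x,y)\to 0$ implies $d(x,y)\to 0$, also not near $\pO$ in an appropriate sense); more precisely, there is $\delta_0 > 0$ such that $u - v \le m/2$ on $\{x : d(x,\pO)\le \delta_0\}$, so any near-maximizing point stays in the compact-away-from-boundary region $\{d(\cdot,\pO) > \delta_0\}$, i.e. in $\Omega$ itself where the Monge (sub/super)solution property is in force.

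Next I would apply Ekeland's variational principle to the function $w := v - u$, which is $LSC_L$ (being the difference of an $LSC_L$ and a $USC_L$ function), bounded below, and satisfies $\inf_{\Oba} w = -m$. For a small parameter $\eta > 0$, Ekeland produces a point $x_\eta \in \Oba$ with $w(x_\eta) \le \inf w + \eta$ and $w(z) \ge w(x_\eta) - \eta L_f(z, x_\eta)$ for all $z \in \Oba$. By the boundary estimate, for $\eta$ small enough $x_\eta$ lies in $\Omega$. The perturbed inequality rearranges to
\[
u(z) - u(x_\eta) \le v(z) - v(x_\eta) + \eta L_f(z, x_\eta) \quad \text{for all } z \in \Oba.
\]
Now I would extract the two Monge inequalities at $x_\eta$: since $u$ is a Monge subsolution, $\limsup_{L_f(z,x_\eta)\to 0} \frac{u(x_\eta) - u(z)}{L_f(x_\eta, z)} \le 1$; since $v$ is a Monge supersolution, $\limsup_{L_f(z,x_\eta)\to 0} \frac{v(x_\eta) - v(z)}{L_f(x_\eta, z)} \ge 1$, so there is a sequence $z_k \to x_\eta$ in $L_f$ along which $\frac{v(x_\eta) - v(z_k)}{L_f(x_\eta, z_k)} \to c \ge 1$. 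Plugging $z = z_k$ into the perturbed inequality, dividing by $L_f(z_k, x_\eta) > 0$, and taking $k\to\infty$, the left side has liminf at least $-1$ (from the subsolution bound applied with the correct sign — one needs $\liminf \frac{u(z_k)-u(x_\eta)}{L_f(z_k,x_\eta)} \ge -1$, which follows from the subsolution definition), the first term on the right tends to $-c \le -1$, and the last term tends to $0$; this yields $-1 \le -c \le -1$, i.e. $c = 1$, which is not yet a contradiction. To force a genuine contradiction I would instead combine the subsolution and supersolution bounds \emph{simultaneously}: from the perturbed inequality, for any $z$,
\[
\frac{u(x_\eta) - u(z)}{L_f(x_\eta, z)} \ge \frac{v(x_\eta) - v(z)}{L_f(x_\eta, z)} - \eta,
\]
and taking the limsup as $L_f(z, x_\eta)\to 0$ of the right-hand side along the supersolution-optimal sequence gives $\limsup \frac{u(x_\eta)-u(z)}{L_f(x_\eta,z)} \ge 1 - \eta$; but this is compatible with the subsolution bound $\le 1$, so again no contradiction from a single Ekeland point.

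The resolution — and the step I expect to be the main obstacle — is the standard trick of \emph{strict subsolution perturbation}: before applying Ekeland, replace $u$ by $u_\theta := (1-\theta) u$ for small $\theta \in (0,1)$, which (using boundedness of $u$ and positivity of $L_f$) is a strict Monge subsolution in the sense $\limsup_{L_f(z,x_0)\to 0} \frac{u_\theta(x_0) - u_\theta(z)}{L_f(x_0,z)} \le 1 - \theta$, while still $\sup(u_\theta - v) > 0$ for $\theta$ small because $m > 0$ (here one must check $u_\theta - v$ still satisfies the boundary condition \eqref{bdry verify monge}, which holds since $u_\theta \le u + C\theta$ with $C = \sup|u|$ and $\theta$ chosen after $\delta_0$). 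Running the Ekeland argument above with $u_\theta$ in place of $u$ and $\eta < \theta$ then produces $1 - \eta \le \limsup \frac{u_\theta(x_\eta) - u_\theta(z)}{L_f(x_\eta, z)} \le 1 - \theta$, i.e. $\theta \le \eta$, contradicting $\eta < \theta$. The delicate points requiring care are: verifying $u_\theta$ is genuinely a strict subsolution with the correct constant (this uses only that $u$ is bounded, since $(1-\theta)u(x_0) - (1-\theta)u(z) = (1-\theta)(u(x_0)-u(z))$ — actually this is immediate and the strictness is automatic), confirming that the Ekeland point $x_\eta$ stays in the open set $\Omega$ so that the solution properties apply, and handling the direction of the inequalities carefully given that the $L_f$-limsup in the Monge definition is one-sided. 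I would also note that $(\Oba, L_f)$ being merely complete (not compact) is exactly why Ekeland, rather than a direct maximum-point argument, is the right tool, as emphasized in the discussion preceding the theorem.
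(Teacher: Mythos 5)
Your proposal is correct and is essentially the paper's own argument: the paper likewise applies Ekeland's variational principle on the complete space $(\Oba,L_f)$ to the difference of the supersolution and a multiplicatively perturbed subsolution $\lambda u$ (your $u_\theta=(1-\theta)u$), localizes away from $\partial\Omega$ via \eqref{bdry verify monge}, and reaches the same contradiction $1\le\lambda+\vep$ (your $1-\eta\le 1-\theta$). The only cosmetic difference is bookkeeping: the paper first normalizes $u,v\ge 0$ so that $\lambda u-v\le u-v$ transfers the boundary estimate directly, whereas you track the error $C\theta$ with $C=\sup|u|$, and it concludes by letting $\lambda\to 1$ rather than contradicting $m>0$.
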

\begin{proof}
Since $u$ and $v$ are bounded, we may assume that $u, v\geq 0$ by adding a 
positive constant to them. It suffices to show that $\lambda u\le v$ in $\Omega$ for all $\lambda\in (0,1)$. 
Assume by contradiction that 
there exists $\lambda\in (0,1)$ such that $\sup_{\Omega}(\lambda u-v)> 2\tau$ for some $\tau>0$.  
By \eqref{bdry verify monge}, we may take $\delta>0$ small such that 
\[
\lambda u(x)-v(x)\leq u(x)-v(x)\leq \tau
\]
for all $x\in \Oba\setminus \Omega_\delta$, where we denote $\Omega_r=\{x\in \Omega: d(x, \pO)>r\}$ for $r>0$. 
We choose $\vep\in (0, \delta\alpha/2)$ small, where $\alpha>0$ is the lower bound of $f$ as in \eqref{f-lower}, such that
\[
\sup_\Omega (\lambda u-v)>2\tau+\vep^2
\]
and
\begin{equation}\label{eps small}
\vep<1-\lambda.
\end{equation}
Thus there exists $x_0\in \Omega$ such that $\lambda u(x_0)-v(x_0)\geq \sup_\Omega (\lambda u-v)-\vep^2>2\tau$ 
and therefore $x_0\in \Omega_\delta$. 

In view of Lemma \ref{lem complete}, we see that $(\Oba, L_f)$ is complete. 
Since $\lambda u-v$ are bounded from above and upper semicontinuous in $\overline{\Omega}$ with respect to the metric $L_f$, by Ekeland's variational principle (cf. \cite[Theorem 1.1]{Ek1}, \cite[Theorem 1]{Ek2}), 
there exists 
$x_\vep\in \Omega$ such that 
\begin{equation}\label{ekeland1}
L_f(x_\vep, x_0)<\vep,
\end{equation}
\[
\lambda u(x_\vep)-v(x_\vep)\geq \lambda u(x_0)-v(x_0), 
\]
and 
\begin{equation}\label{ekeland2}
\lambda u(x)-v(x)-\vep L_f(x_\vep, x)\leq \lambda u(x_\vep)-v(x_\vep) \quad \text{for all $x\in \Omega$.}
\end{equation}
Note that by \eqref{lower lip}, the relation \eqref{ekeland1} 
combined with the choice of $\vep<\delta\alpha/2$
implies that $x_\vep\in B_{\vep/\alpha}(x_0)\subset \Omega_{\delta/2}$.  
Then from~\eqref{ekeland2}, it follows that 
\begin{equation}\label{comparison monge1}
v(x_\vep)-v(x)\le \lambda u(x_\vep)-\lambda u(x)+\vep L_f(x_\vep, x)\quad \text{for all $x\in \Omega$}
\end{equation}
when $\vep>0$ is small enough. 
Hence, by \eqref{lower lip} we get
\[
\limsup_{L_f(x, x_\vep)\to 0}\frac{v(x_\vep)-v(x)}{L_f(x_\vep, x)}\le \limsup_{L_f(x, x_\vep)\to 0}\frac{\lambda(u(x_\vep)-u(x))}{L_f(x_\vep, x)}+\vep.  
\]
By the Monge subsolution property of $u$ and the Monge supersolution property of $v$, it follows that $1\leq \lambda+\vep$,
which is a contradiction to \eqref{eps small}. Hence, we obtain $\lambda u\le v$ in $\Omega$ for all $0<\lambda<1$. Letting $\lambda\to 1$, we end up with $u\le v$ in $\Omega$. 
Our proof is thus complete.
\end{proof}

\begin{rmk}\label{rem comparison}
If in Theorem \ref{thm comparison monge} we further assume that $(\Oba, L_f)$ is compact, 
then the proof becomes simpler. 
Under this assumption, there is no need to use Ekeland's variational principle, since $u-v$ is upper semicontinuous 
and hence attains a maximum at a point $\hat{x}$ in the compact set $\Oba$. The condition \eqref{bdry verify monge} can also be simplified; it is sufficient to assume that $u\leq v$ on $\partial \Omega$, since it guarantees a positive maximum of $\lambda u-v$ in $\Omega$ for $0<\lambda<1$ in our proof above. We summarize this observation in the following theorem.
\end{rmk}

\begin{thm}[Comparison principle in a compact space]\label{thm comparison geodesic}
Let $(\X, d)$ be a complete length 
 space and $\Omega\subsetneq\X$ be a bounded domain. Assume that $f$ satisfies \eqref{f-lower} and that $(\Oba, L_f)$ is compact. 
Let $u \in USC_{L}(\Oba)$ and $v \in LSC_{L}(\Oba)$ be respectively a bounded Monge subsolution and a bounded Monge supersolution of \eqref{eikonal}. If $u\leq v$ on $\partial \Omega$, then $u\leq v$ in $\Oba$. 
\end{thm}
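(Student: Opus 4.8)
The plan is to mirror the proof of Theorem~\ref{thm comparison monge}, but to use compactness of $(\Oba, L_f)$ so as to bypass Ekeland's variational principle, exactly as sketched in Remark~\ref{rem comparison}. As there, I would first normalize by adding a common constant so that $u, v \ge 0$ on $\Oba$, and reduce the claim $u \le v$ to proving $\lambda u \le v$ in $\Omega$ for every $\lambda \in (0,1)$; letting $\lambda \to 1$ then finishes.

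Fix $\lambda \in (0,1)$ and argue by contradiction, assuming $\sup_\Omega(\lambda u - v) > 0$. Since $u \in USC_{L}(\Oba)$ and $v \in LSC_{L}(\Oba)$, the function $\lambda u - v$ is upper semicontinuous with respect to $L_f$; as $(\Oba, L_f)$ is compact, it attains its maximum over $\Oba$ at some $\hat x$. On $\partial\Omega$ we have $\lambda u - v \le u - v \le 0$, using $u \le v$ there and $u \ge 0$; hence the maximum value equals $\sup_\Omega(\lambda u - v) > 0$ and the maximizer satisfies $\hat x \in \Omega$.

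Maximality of $\hat x$ gives, for every $x \in \Omega$,
\[
v(\hat x) - v(x) \le \lambda\bigl(u(\hat x) - u(x)\bigr).
\]
Dividing by $L_f(\hat x, x) > 0$ and taking $\limsup$ as $L_f(x, \hat x) \to 0$ (monotonicity of $\limsup$, together with the fact that multiplication by the positive constant $\lambda$ commutes with $\limsup$), we get
\[
\limsup_{L_f(x,\hat x)\to 0}\frac{v(\hat x) - v(x)}{L_f(\hat x, x)} \le \lambda\,\limsup_{L_f(x,\hat x)\to 0}\frac{u(\hat x) - u(x)}{L_f(\hat x, x)}.
\]
The Monge supersolution property of $v$ bounds the left side below by $1$, and the Monge subsolution property of $u$ bounds the right side above by $\lambda$, so $1 \le \lambda$, contradicting $\lambda < 1$. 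This proves $\lambda u \le v$ in $\Omega$, and $\lambda \to 1$ concludes.

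I do not anticipate a genuine obstacle here; the substantive simplification over Theorem~\ref{thm comparison monge} is that compactness produces the maximizer $\hat x$ directly, so no perturbation term $\vep L_f(x_\vep, \cdot)$ and no $\vep$-bookkeeping are needed. The only point requiring a word of care is that the $\limsup$ at $\hat x$ be over a nonempty set, i.e.\ that $\hat x$ is not $L_f$-isolated; this is automatic since $\hat x$ lies in the open set $\Omega$ of the length space $\X$, so short subarcs of curves through $\hat x$ yield points $x$ with $L_f(\hat x, x)$ arbitrarily small.
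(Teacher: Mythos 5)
Your proof is correct and follows essentially the same route the paper intends: it is exactly the simplification sketched in Remark~\ref{rem comparison}, reusing the $\lambda$-scaling and interior-maximum contradiction from the proof of Theorem~\ref{thm comparison monge}, with compactness of $(\Oba,L_f)$ supplying a true maximizer of $\lambda u-v$ in place of Ekeland's variational principle. Your closing worry about $\hat x$ being $L_f$-isolated is moot anyway (the supersolution hypothesis at $\hat x$ already forces the limsup set to be nonempty, and the standing assumption \eqref{c-integrable} plus absolute continuity of $\int_\gamma f\,ds$ along an admissible curve gives points with arbitrarily small $L_f$-distance), so nothing further is needed.
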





We would also like to remark that the comparison results may be applied to the situation with only partial boundary data (or the so-called boundary condition in the viscosity sense). In general, even though $g$ is prescribed on $\partial \Omega$, a Monge solution may only fulfill the condition on a subset of $\partial \Omega$. 
We discuss such situation in detail in Section~\ref{sec:bdry-loss}.

We concluding this section by remarking that the boundedness condition on $u$ and $v$ in Theorem \ref{thm comparison monge}  is essential. In fact, without assuming 
the boundedness, we have the following simple counterexample for uniqueness of Monge solutions. 
\begin{example}\label{ex boundedness}
Let $\X$ be the unit circle in $\R^2$, centered at $O=(0, 0)$, that is, 
\[
\X=\mathbb{S}^1
=\{(\cos\theta, \sin \theta): -\pi < \theta\leq \pi\}.
\]
Here $(\X, d)$ is a complete geodesic space with $d$ its intrinsic length metric. 
Let $\Omega=\X\setminus \{(1, 0)\}$ and $g(1, 0)=0$. Take $f: \Omega\to \R$ to be 
\[
f(\cos\theta, \sin\theta)=\begin{cases}1/\pi & \text{if $0\leq \theta\leq \pi$,}\\
1/(\theta+\pi) & \text{if $-\pi<\theta<0$}.
\end{cases}
\] 
See Figure \ref{fig:circle}. 
\begin{figure}[H]
\centering
  \centering
  \includegraphics[width=0.5\linewidth]{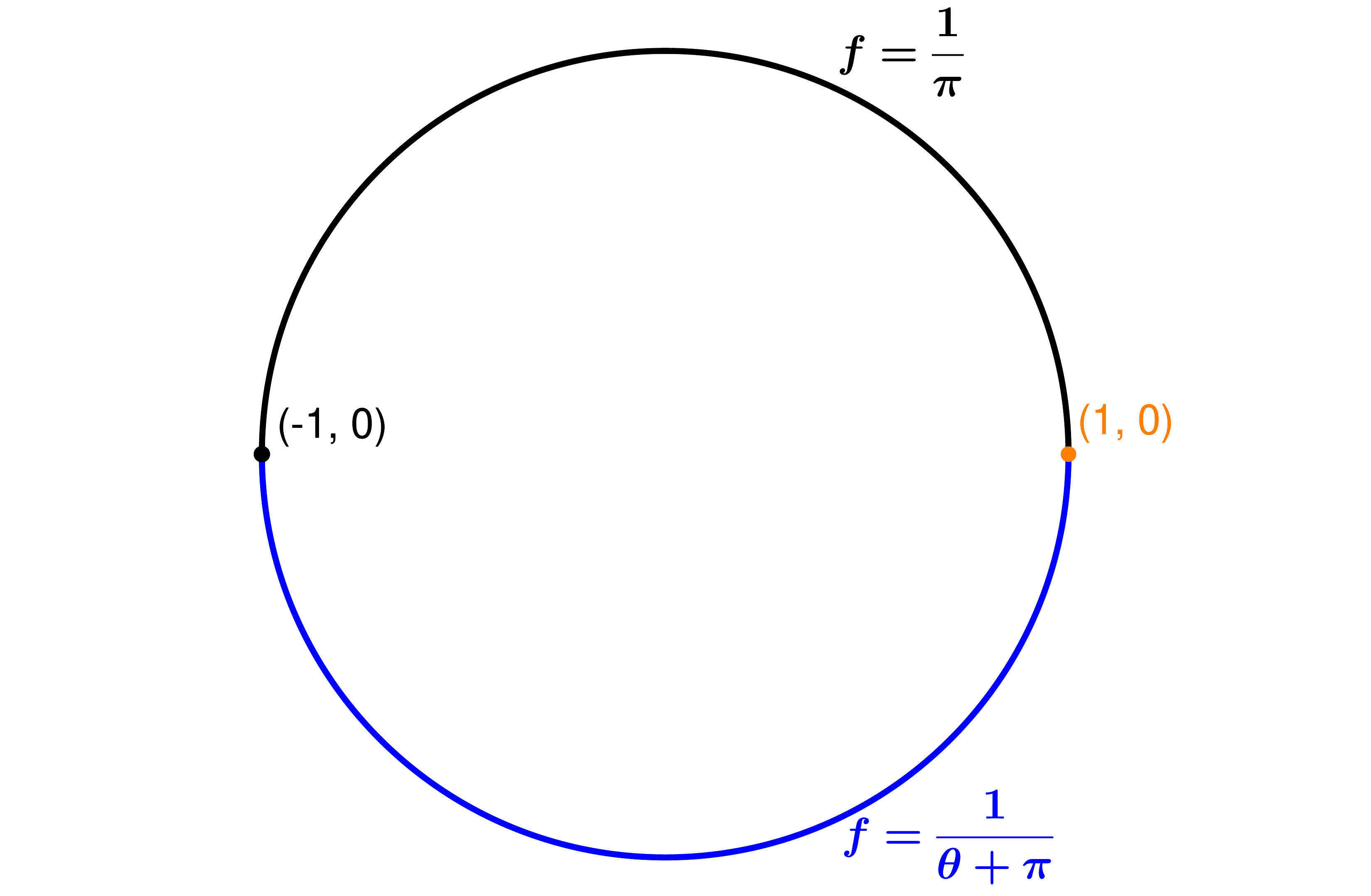}
  \caption{Example \ref{ex boundedness}}
  \label{fig:circle}
\end{figure}

Note that 
\[
u(\cos\theta, \sin\theta)=\begin{cases}\theta /\pi & \text{if $0\leq \theta\leq \pi$,}\\
\log (\pi/ \pi+\theta) & \text{if $-\pi<\theta<0$}
\end{cases}
\]
is a Monge solution of \eqref{eikonal},\,\eqref{dirichlet}, which can be derived from the formula \eqref{lax} in Theorem~\ref{exist}. 
Note that as $(-\pi,0)\ni\theta\to -\pi$, we must have $L_f((\cos\theta,\sin\theta), (-1,0))\to\infty$.
On  the other hand, we observe that 
\[
v(\cos\theta, \sin\theta)=\begin{cases}\theta /\pi & \text{if $0\leq \theta\leq \pi$,}\\
-\log (\pi/ \pi+\theta) & \text{if $-\pi<\theta<0$}
\end{cases}
\]
is also a Monge solution that also satisfies \eqref{bdry regularity}. 
This example shows that in general we may have multiple solutions, continuous but unbounded in $(\Oba, L_f)$. 
In this example, once we equip $\X=\overline{\Omega}$ with the metric $L_f$, the topology on $\X$ 
changes so that $\Omega$ now is homeomorphic to the Euclidean set $(-\infty,0)\cup(0,1]$, with boundary $\{0\}$. In effect,
$\Omega$ is no longer connected with respect to the topology generated by the metric $L_f$, allowing us to
construct two distinct solutions.
\end{example}

\section{Existence of Monge solutions}\label{sec:existence}

In this section, we establish an optimal control interpretation of the Dirichlet boundary condition \eqref{dirichlet}
for the problem~\eqref{eikonal} and prove our main result, Theorem \ref{exist}. 
 We also discuss the case when the boundary compatibility condition \eqref{growth} fails to hold and turn it into another Dirichlet problem of the same type but with reduced boundary data.

\subsection{Optimal control formulation for the Dirichlet problem}\label{sec:control}
Recall that the standing assumption \eqref{c-integrable} still holds. 
In other words, there exist curves in $\Oba$ connecting any $x, y\in \Oba$ 
such that $L_f(x,y)<\infty$. It is a condition on $\Omega$ as well as the regularity of $f$. 


\begin{lem}[Lipschitz continuity in $L_f$]\label{lem conti}
Let $(\X, d)$ be a complete length space, $\Omega\subsetneq\X$ be a bounded domain, $f$ satisfy \eqref{f-lower},
and $g: \partial \Omega\to \R$ be bounded. Then the function $u$, as defined by \eqref{lax},
satisfies
\begin{equation}\label{dpp-sub}
u(x)\le u(y)+L_f(x, y)\quad \text{for all $x, y\in \Oba$}. 
\end{equation}
In particular, $u$ is Lipschitz in $\Oba$ with respect to $L_f$ and hence locally bounded with
respect to the topology generated by $L_f$.
Moreover, if \eqref{ol-uc} holds, then $u$ is uniformly continuous in $\Oba$ with respect to $d$. 
\end{lem}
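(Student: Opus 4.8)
The plan is to obtain the inequality \eqref{dpp-sub} as a direct consequence of the triangle inequality for $L_f$ from Lemma \ref{lem metric-like}, then read off the $L_f$-Lipschitz bound and local boundedness, and finally combine that bound with \eqref{ol-uc} to get uniform continuity in $d$.

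First I would fix $x, y\in \Oba$ and $\vep>0$. By the definition \eqref{lax} of $u$, there exists $z\in \partial\Omega$ with $u(y)\ge L_f(y, z)+g(z)-\vep$. Using parts (2) and (3) of Lemma \ref{lem metric-like} we have $L_f(x, z)\le L_f(x, y)+L_f(y, z)$, hence
\[
u(x)\le L_f(x, z)+g(z)\le L_f(x, y)+L_f(y, z)+g(z)\le L_f(x, y)+u(y)+\vep .
\]
Letting $\vep\to 0$ yields \eqref{dpp-sub}. Interchanging the roles of $x$ and $y$ and invoking the symmetry $L_f(x,y)=L_f(y,x)$ gives $|u(x)-u(y)|\le L_f(x, y)$ for all $x, y\in \Oba$, i.e.\ $u$ is $1$-Lipschitz with respect to $L_f$.

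For local boundedness I would first note that $u$ is real-valued: since $g$ is bounded, $L_f\ge 0$ forces $u\ge \inf_{\partial\Omega} g>-\infty$, while for a fixed $z_0\in \partial\Omega$ the standing assumption \eqref{c-integrable} gives $L_f(x, z_0)<\infty$, so $u(x)\le L_f(x, z_0)+g(z_0)<\infty$. Combining this with the Lipschitz estimate, $u$ is bounded on every set of finite $L_f$-diameter, in particular on every $L_f$-ball, which is exactly local boundedness with respect to the topology generated by $L_f$. Finally, assuming \eqref{ol-uc}, given $\eta>0$ I would pick $r>0$ with $\sup\{L_f(x, y): x, y\in \Oba,\ d(x, y)\le r\}<\eta$; then $d(x,y)\le r$ implies $|u(x)-u(y)|\le L_f(x, y)<\eta$, which is uniform continuity of $u$ in $\Oba$ with respect to $d$. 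I do not expect a genuine obstacle in this argument; the only point needing a little care is verifying that $u$ is finite everywhere so that the Lipschitz bound is meaningful, which is where \eqref{c-integrable} and the boundedness of $g$ enter.
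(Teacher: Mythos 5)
Your proposal is correct and follows essentially the same route as the paper: the inequality \eqref{dpp-sub} comes from the triangle inequality in Lemma \ref{lem metric-like}(3) applied to $u(x)\le L_f(x,z)+g(z)$ and then passing to the infimum over $z\in\partial\Omega$ (your $\vep$-approximate minimizer is just this infimum argument), after which symmetry gives the $L_f$-Lipschitz bound and \eqref{ol-uc} gives uniform continuity in $d$. Your added remark that finiteness of $u$ rests on the standing assumption \eqref{c-integrable} and the boundedness of $g$ is a harmless (and correct) elaboration of what the paper leaves implicit.
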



\begin{proof}
By Lemma \ref{lem metric-like}(3), we have for any $x, y\in \Oba$ and $z\in \partial \Omega$,
\[
u(x)\le L_f(x, z)+g(z)\leq L_f(y, z)+g(z)+L_f(x, y).
\]
Then taking infima over $z\in \partial \Omega$, we get \eqref{dpp-sub}.
As an immediate consequence, the Lipschitz continuity of $u$ in $\Oba$ with respect to $L_f$ holds. 
Also, we can obtain the uniform continuity of $u$ with respect to $d$ if we additionally assume that \eqref{ol-uc} holds.
\end{proof}

Let us now prove our main result, Theorem \ref{exist}. 


\begin{proof}[Proof of Theorem \ref{exist}]
We first show (i). By Lemma \ref{lem conti}, we see that $u$ is Lipschitz with respect to $L_f$ and is a 
Monge subsolution; indeed, 
by \eqref{dpp-sub} we have that for any 
$x_0\in \Omega$, 
\begin{equation}\label{eq:sub-opt}
u(x_0)-u(x)\leq L_f(x_0, x)\quad \text{for all $x\in \Omega$}
\end{equation}
 and thus 
\begin{equation}\label{eq:sub-opt2}
\limsup_{L_f(x, x_0)\to 0} \frac{u(x_0)-u(x)}{L_f(x_0, x)}\leq 1. 
\end{equation}
On the other hand, in view of the definition of $u$ from \eqref{lax}, for any $\vep>0$ and any $r\in (0, d(x_0,\partial\Omega))$, there exist 
$y\in \partial \Omega$ and $\gamma\in \Gamma_f(x_0, y)$ such that 
\begin{equation}\label{eq:super-opt}
u(x_0)\geq \int_\gamma f \, ds+g(y) -\vep r.
\end{equation}
Let $x$ be a point on $\gamma$ with $d(x_0, x)=r$ and denote by $\gamma_x$ the portion of $\gamma$ between $x_0$ and $x$. Then by \eqref{lax} again, we deduce that
\[
u(x_0)\geq \int_{\gamma_x} f\, ds+u(x)-\vep d(x_0, x)\geq u(x)+L_f(x_0, x)-\vep d(x_0, x). 
\]
It follows that 
\[
\frac{u(x_0)-u(x)}{L_f(x_0, x)}\geq 1-\vep \frac{d(x_0, x)}{L_f(x_0, x)},
\]
which, by \eqref{f-lower}, yields 
\[
\limsup_{L_f(x, x_0)\to 0}\frac{u(x_0)-u(x)}{L_f(x_0, x)}\geq 1-\frac{\vep}{\alpha}. 
\]
Due to the arbitrariness of $\vep>0$ and $x_0\in \Omega$, we obtain the Monge supersolution property of $u$. We have thus shown that $u$ is a Monge solution of \eqref{eikonal} in the sense of Definition~\ref{Monge def}. 

Let us show (ii) and (iii). We prove \eqref{bdry regularity} under the additional compatibility condition \eqref{growth} on the boundary value $g$. 
Indeed, fixing $x\in \Oba$ and $y\in \partial \Omega$ arbitrarily, 
by the definition of $u$ in \eqref{lax} we can see that 
\begin{equation}\label{bdry reg1}
u(x)-g(y)\leq L_f(x, y)
\end{equation}
For any $\delta>0$ small, the definition of $u$ also implies the existence of 
 $y_\delta\in \partial \Omega$ such that 
\begin{equation} \label{bdry continuity3}
u(x)\geq L_f(x, y_\delta)+g(y_{\delta})-\max\{\delta, d(x, \partial\Omega)\}. 
\end{equation}
Since the compatibility condition \eqref{growth} yields
\begin{equation*} 
g(y)\leq g(y_\delta)+L_f(y, y_\delta), 
\end{equation*}
applying Lemma \ref{lem metric-like} we deduce from \eqref{bdry continuity3} that
\begin{equation*} 
u(x)-g(y)\geq L_f(x, y_\delta)-L_f(y, y_\delta)-d(x, \partial\Omega)\geq -L_f(x, y)-\max\{\delta, d(x, \partial\Omega)\}. 
\end{equation*}
Combining \eqref{bdry reg1}  with the above inequality we obtain 
\begin{equation}\label{bdry continuity}
|u(x)-g(y)|\leq L_f(x, y)+\max\{\delta, d(x, \partial\Omega)\}
\end{equation}
for all $x\in \Omega$ and $y\in \partial \Omega$. Therefore, 
\begin{equation}\label{bdry continuity2}
\begin{aligned}
\sup\{|u(x)-g(y)|:\  & x\in \Oba,\  y\in \partial \Omega,\ d(x, y)\leq \delta\}\\
&\leq \sup\{L_f(x, y): x, y\in \Oba\setminus \Omega_\delta,\ d(x, y)\leq \delta\}+\delta.
\end{aligned}
\end{equation}
By the additional assumption \eqref{ol-uc2}, we obtain \eqref{bdry regularity} by passing to the limit of \eqref{bdry continuity2} as 
$\delta\to 0$. In particular, we have $u=g$ on $\partial \Omega$. The uniqueness of solutions follow from the comparison principle, Theorem~\ref{thm comparison monge}. Suppose that there is another such solution $v$ satisfying~\eqref{bdry regularity}. 
For any $x\in \Oba$ with $d(x, \partial \Omega)\leq \delta$, we can find $y\in \partial\Omega$ such that $d(x, y)\leq 2\delta$ and 
\[
\begin{aligned}
&\sup\left\{u(x)-v(x): x\in \Oba, \ d(x, \pO)\leq \delta \right\}\\
&\leq \sup\{|u(x)-g(y)|+|v(x)-g(y)|: x\in \Oba, y\in \partial \Omega, d(x, y)\leq 2\delta\}.\end{aligned}
\]
By \eqref{bdry regularity}, this yields \eqref{bdry verify monge}. Using Theorem~\ref{thm comparison monge}, we end up with $u=v$ in $\Oba$, which completes the proof of (ii).

The statement (iii) can be immediately proved, since \eqref{ol-uc} implies \eqref{ol-uc2}. We obtain the uniform continuity of $u$ with respect to $d$ by \eqref{ol-uc} and Lemma ~\ref{lem conti}. 
\end{proof}

\begin{rmk}\label{rem exist}
Following Theorem \ref{thm comparison geodesic}, we see that if $(\X, L_f)$ is complete and locally compact, 
then the function $u$ obtained by \eqref{lax} is the 
unique Monge solution of \eqref{eikonal}\eqref{dirichlet} provided that \eqref{growth} holds. We do not need to adopt interpretation \eqref{bdry regularity} for \eqref{dirichlet}.
\end{rmk}

Following Example \ref{ex no-bilip}, we now give an explicit example of unique Monge solution of the Dirichlet problem in 
one dimension with $L^p$ inhomogeneous term. 

\begin{example}\label{ex dis eikonal}
Let $(\X, d)$, $\Omega$ and $f$ be given as in Example \ref{ex no-bilip}. We recall that $f\in L^p(\Omega)$ with $p\in [1, 2)$. 
Setting $g(\pm 1)=0$, by Theorem \ref{exist}(iii) we can show that the unique Monge solution of the Dirichlet problem is as in \eqref{eq ex dis eikonal}.
\end{example}


The statement (i) in Theorem \ref{exist} is a very general existence result for Monge solutions. It allows us to have a discontinuous (with respect to $d$)
Monge solution of \eqref{eikonal} even
when \eqref{ol-uc} fails to hold. In this case, under the assumptions of (ii), we have unique existence of a solution that is Lipschitz with respect to $L_f$ but possibly discontinuous in the metric $d$. 
Below we give a typical example illustrating such behavior based on Example~\ref{ex singular}.

\begin{example}\label{ex disc sol}
Let $(\X, d)$ and $f$ be given as in Example \ref{ex singular}. Let $\Omega=\X\setminus \{Q_1\}$; in other words, we set $\partial \Omega=\{Q_1\}$. Also, we take $g=0$ at $Q_1$. We have shown in Example \ref{ex singular} that $\X=\Oba$ is complete with respect to the metric $L_f$. 
Applying Theorem \ref{exist}(ii), 
we obtain a unique Monge solution $u$ of \eqref{eikonal}\eqref{dirichlet} satisfying \eqref{bdry regularity}. In particular, by \eqref{lax} we have $u(O)=2$, $u(Q_j)=3-1/j$ for all $j\geq 2$. This clearly shows that 
$u$ is not continuous at $O$ with respect to the metric $d$. However, 
$u$ is Lipschitz continuous with respect to the metric $L_f$. 
\end{example}

Let us present an additional property of the Monge solution $u$, which is a direct generalization of \cite[Proposition 4.15]{LShZ}.

\begin{prop}[Additional regularity]\label{prop monge-reg}
Let $(\X, d)$ be a complete length space and $\Omega\subsetneq\X$ be a bounded domain.  Assume that 
$f$ satisfies \eqref{f-lower},~\eqref{c-integrable} and  
that $g$ satisfies \eqref{growth} and \eqref{ol-uc2} holds. 
Then any $u$ that satisfies \eqref{bdry regularity} is the unique Monge solution of \eqref{eikonal} if and only if  
\begin{equation}\label{eq semicon sol}
|\nabla_f u|=1\quad \text{and} \quad |\nabla_f^+ u|\leq |\nabla_f^- u|\quad \text{ in $\Omega$.}
\end{equation}
Here, for any locally bounded function 
$u: \Omega\to \R$ and $x\in \Omega$, we set 
\begin{equation}\label{new slope}
\begin{aligned}
&|\nabla_f u|(x)=\limsup_{L_f(x, y)\to 0} \frac{|u(y)-u(x)|}{L_f(x,y)},\\
&|\nabla_f^+ u|(x)=\limsup_{L_f(x, y)\to 0} \frac{\max\{u(y)-u(x), 0\}}{L_f(x,y)}.
\end{aligned}
\end{equation}
\end{prop}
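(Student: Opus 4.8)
The plan is to prove the equivalence of the Monge solution property with the pair of conditions in \eqref{eq semicon sol} by unpacking the definitions in terms of the new subslope $|\nabla_f^- u|$ and its counterpart $|\nabla_f^+ u|$. First I would recall from Definition \ref{Monge def} that $u$ being a Monge solution means precisely that $|\nabla_f^- u|(x) = 1$ for every $x \in \Omega$, where $|\nabla_f^- u|(x) = \limsup_{L_f(x,y)\to 0} \frac{\max\{u(x)-u(y),0\}}{L_f(x,y)}$. The essential algebraic observation is that the full slope decomposes as $|\nabla_f u|(x) = \max\{|\nabla_f^- u|(x), |\nabla_f^+ u|(x)\}$, since $|u(y)-u(x)| = \max\{u(x)-u(y), u(y)-u(x)\}$ and the limsup of a maximum of two nonnegative quantities equals the maximum of the two limsups (the latter requires a short argument, extracting subsequences along which each limsup is realized). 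With this decomposition in hand, the equivalence becomes essentially formal.

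The forward direction assumes $u$ is the unique Monge solution satisfying \eqref{bdry regularity}, so $|\nabla_f^- u| = 1$ in $\Omega$. I would then need $|\nabla_f^+ u| \le |\nabla_f^- u| = 1$ in $\Omega$, which would also give $|\nabla_f u| = \max\{1, |\nabla_f^+ u|\} = 1$. This is the one place where uniqueness — not merely the Monge solution property — is needed, so it must invoke an argument along the lines of \cite[Proposition 4.15]{LShZ}: if $|\nabla_f^+ u|(x_0) > 1$ at some point, one constructs a competing Monge solution (perhaps by a suitable one-sided modification or by comparison with the value function built from $x_0$ as an interior source), contradicting uniqueness. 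Concretely, I expect the argument to show that the value function $w(x) = \inf_{y \in \partial\Omega}\{L_f(x,y)+g(y)\}$ from \eqref{lax} is the unique solution (by Theorem \ref{exist}(ii), available since \eqref{growth} and \eqref{ol-uc2} hold), that $w$ automatically satisfies $|\nabla_f^+ w| \le |\nabla_f^- w|$ because $w$ is built as an infimum of distance-type functions — and then $u = w$ forces the inequality on $u$.

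The reverse direction assumes \eqref{eq semicon sol} and must show $u$ is the unique Monge solution. From $|\nabla_f u| = \max\{|\nabla_f^-u|, |\nabla_f^+u|\} = 1$ together with $|\nabla_f^+ u| \le |\nabla_f^- u|$, we immediately get $|\nabla_f^- u| = 1$ in $\Omega$, i.e.\ $u$ is a Monge solution. Combined with \eqref{bdry regularity} and the comparison principle Theorem \ref{thm comparison monge} (whose hypotheses are guaranteed by \eqref{f-lower}, \eqref{growth}, \eqref{ol-uc2}, and the boundedness of $u$ that follows from Lemma \ref{lem conti} applied to $w$, after identifying $u$ with $w$), uniqueness follows. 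One subtlety: to apply the comparison principle one needs semicontinuity with respect to $L_f$; here I would note that any Monge solution is automatically locally bounded with respect to $L_f$, and that the combination of $|\nabla_f u| = 1$ forces a local Lipschitz-type bound making $u \in C_L(\Oba)$, or else deduce semicontinuity directly from the value-function representation.

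The main obstacle I anticipate is the forward direction's use of uniqueness to extract $|\nabla_f^+ u| \le |\nabla_f^- u|$: this is not a pointwise manipulation but genuinely requires the rigidity coming from uniqueness, and getting the competitor construction to work in the metric setting $(\Oba, L_f)$ — where one-sided perturbations near an interior point must still satisfy the boundary interpretation \eqref{bdry regularity} and the Monge equation everywhere else — needs care. A secondary technical point is justifying the limsup-of-maximum identity $|\nabla_f u| = \max\{|\nabla_f^- u|, |\nabla_f^+ u|\}$ rigorously, since limsup is only subadditive in general; the correct statement is the one for maxima of two families, which does hold, but the proof should spell out the subsequence extraction. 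Both of these I would model closely on the corresponding arguments in \cite{LShZ}, adapting the metric $d$ there to $L_f$ here.
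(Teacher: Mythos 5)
Your proposal is correct and takes essentially the same route as the paper: the reverse direction is exactly the max-decomposition argument ($|\nabla_f u|=\max\{|\nabla_f^+u|,|\nabla_f^-u|\}$ forces $|\nabla_f^- u|=1$), and for the forward direction the paper does precisely what you describe as your ``concrete'' expectation --- uniqueness identifies $u$ with the value function \eqref{lax}, and then the triangle-type bound \eqref{dpp-sub} from Lemma \ref{lem conti} gives $u(x)\le u(x_0)+L_f(x_0,x)$, hence $|\nabla_f^+ u|(x_0)\le 1=|\nabla_f^- u|(x_0)$ and $|\nabla_f u|=1$. The competitor-construction you flag as the main obstacle is never needed, so the difficulty you anticipate in the forward direction does not arise.
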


\begin{proof}
The implication $\Leftarrow$ is obvious, as \eqref{eq semicon sol} also implies $|\nabla_f^- u|=1$ in $\Omega$. It suffices to prove $\Rightarrow$. The unique Monge solution $u$ can be expressed by \eqref{lax}. By Lemma \ref{lem conti},  it satisfies \eqref{eq:sub-opt}. It follows that for every fixed $x_0\in \Omega$, we have $u(x)\leq u(x_0)+L_f(x_0, x)$ for for $x\in \Omega$,
which immediately yields $|\nabla_f^+ u|(x_0)\leq 1$. Since 
\[
|\nabla_f u|=\max\left\{|\nabla_f^+ u|,\ |\nabla_f^- u|\right\},
\]
we immediately get \eqref{eq semicon sol}. 
\end{proof}

\subsection{Loss of boundary data and reduced Dirichlet problem}\label{sec:bdry-loss}

In general, the condition \eqref{growth} may not hold. As a result, it is possible that $u=g$ holds only on a subset of $\partial\Omega$. This set may also depend on $f$ but we write $\Sigma_g$ to emphasize that it is the part where the Dirichlet condition is maintained. 
The loss of boundary data occurs on $\partial \Omega \setminus \Sigma_g$. 
 At points $x_0\in\partial\Omega\setminus\Sigma_g$ the Monge condition~\eqref{monge-sub} holds, and so
the function $u$ constructed in~\eqref{lax} solves the problem in the new domain $\Oba\setminus\Sigma_g$. 
We thus can use the comparison results above with $\partial \Omega$ replaced by $\Sigma_g$ to guarantee the uniqueness of solutions. The following typical, well-understood example in $\R$ reveals such a situation.

\begin{example}\label{ex bdry-loss1}
Let $\Omega=(0, 1)\subset \R$ and $f\equiv 1$ in $\Omega$. Set $g(0)=0$ and $g(1)=2$. It turns out that there are no solutions that are continuous in $[0, 1]$ and satisfy both~\eqref{eikonal} and~\eqref{dirichlet} simultaneously. However, $u(x)=x$  is the unique solution that satisfies only the partial boundary condition $g(0)=0$. In fact, $|\nabla^- u|(1)=1$ holds and we can still use the comparison principle if we consider $\Omega=(0, 1]$ and $\partial\Omega= \{0\}$. One can certainly take another solution $u(x)=x+1$ for $[0, 1]$, which is a Monge solution in $[0, 1)$ and satisfies the  boundary condition $u(1)=2$. However, this solution 
does not satisfy the Monge condition~\eqref{monge-sub} at $0$,
and moreover, 
such solutions are not necessarily unique from the PDE viewpoint; $u(x)=-x+3$ is also a Monge solution in $[0, 1)$ with $u(1)=2$. 
Indeed, it is the \emph{only} Monge solution in $[0,1)$ with boundary data $u(1)=2$.
We resolve the uniqueness issue by taking $\Sigma_g=\{0\}$ so that $u(x)=x$ is the only solution in $\Omega=(0, 1]$ satisfying $u(0)=0$. The function $u(x)=x$ is regarded as a natural choice of the unique solution here also due to the optimal control interpretation given by \eqref{lax}.
\end{example}

Let us provide a more general result for the case when \eqref{growth} fails to hold and $u\neq g$ on $\partial \Omega$. 
As explained at the end of Section \ref{sec:comparison}, we still expect that there exists a solution $u$ such that $u=g$ on a subset $\Sigma_g$ of $\partial \Omega$. In fact, we take
\begin{equation}\label{effective bdry}
\Sigma_g=\left\{x\in \partial \Omega: g(x)\leq \inf_{y\in \partial \Omega}\{g(y)+L_f(x, y)\}\right\}.
\end{equation}
By \eqref{lax}, it is clear that
\begin{equation}\label{effective bdry2}
\Sigma_g=\left\{x\in \partial \Omega: g(x)\leq u(x)\right\}=\left\{x\in \partial \Omega: g(x)= u(x)\right\}.
\end{equation}
In general, the set $\Sigma_g$ may be empty. However, $\Sigma_g\neq \emptyset$ if 
$\partial \Omega$ is compact with respect to $L_f$ and 
and $g\in LSC_{L}(\partial \Omega)$, or if $\partial\Omega$ is compact with respect to $d$ and $g\in LSC_d(\partial\Omega)$. 
In either case, $\Sigma_g$ contains the minimizers of $g$ on $\partial \Omega$. Also, $\Sigma_g$ is a closed set with respect to the metric $L_f$.  
Using $\Sigma_g$ we can reduce the original Dirichlet problem to
\begin{equation}\label{eikonal-g}
|\nabla u|(x)=f(x) \quad \text{in $\Oba\setminus \Sigma_g$}
\end{equation}
with boundary condition 
\begin{equation}\label{dirichlet-g}
u=g \quad \text{for $x\in\Sigma_g$.}
\end{equation}

It turns out that $u$ given by \eqref{lax} is the unique Monge solution of \eqref{eikonal-g},\, \eqref{dirichlet-g} when $\Sigma_g\neq \emptyset$. Here, 
for these new Monge solutions, we certainly need to 
extend the definition of Monge solutions to $\Oba\setminus \Sigma_g$. More precisely, we say a locally bounded function $u: \Oba\setminus \Sigma_g\to \R$ is a Monge solution of \eqref{eikonal-g} if it satisfies the property in Definition \ref{Monge def} with 
$\Oba\setminus \Sigma_g$ playing the role of the domain $\Omega$ there, and with \eqref{monge-sub} replaced by 
\begin{equation}\label{monge-g}
\limsup_{x\in \Oba\setminus \Sigma_g,\ L_f(x, x_0)\to 0}\frac{u(x_0)-u(x)}{L_f(x_0, x)}= 1.
\end{equation}
One can define Monge subsolutions and supersolutions of \eqref{eikonal-g} in a similar way. 

\begin{prop}[Well-posedness with possible loss of boundary data]\label{prop bdry loss}
Let $(\X, d)$ be a complete length space and $\Omega\subsetneq\X$ be a bounded domain. 
Assume that \eqref{f-lower} and \eqref{c-integrable} hold. Let $g: \partial \Omega\to \R$ be bounded 
and $u$ be defined by \eqref{lax}. Let $\Sigma_g$ be given by \eqref{effective bdry}. Then $u$ is a Monge solution of 
\eqref{eikonal-g}, \,\eqref{dirichlet-g}. If in addition $\Sigma_g\neq \emptyset$ and  \eqref{ol-uc2} holds, then $u$ is the unique Lipschitz (with respect to $L_f$) Monge solution of the Dirichlet problem \eqref{eikonal-g}\eqref{dirichlet-g} satisfying 
\begin{equation}\label{bdry regularity-g}
\sup\{|u(x)-g(y)|: x\in \Oba,\ y\in \Sigma_g,\ d(x, y)\leq \delta\}\to 0\quad \text{as $\delta\to 0$}.
\end{equation}
\end{prop}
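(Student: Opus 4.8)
The plan is to mirror the structure used in the proof of Theorem~\ref{exist}, but with $\Sigma_g$ in place of $\partial\Omega$ and $\Oba\setminus\Sigma_g$ as the new domain. First I would verify the Monge solution property of $u$ on $\Oba\setminus\Sigma_g$. The subsolution half is immediate: Lemma~\ref{lem conti} gives $u(x)\le u(y)+L_f(x,y)$ for all $x,y\in\Oba$, so $\limsup_{L_f(x,x_0)\to 0}(u(x_0)-u(x))/L_f(x_0,x)\le 1$ at every $x_0\in\Oba\setminus\Sigma_g$. For the supersolution half, fix $x_0\in\Oba\setminus\Sigma_g$; by \eqref{effective bdry2} we have $g(x_0)>u(x_0)$ (or $x_0\in\Omega$), so the infimum in \eqref{lax} defining $u(x_0)$ is nearly attained along a curve $\gamma$ reaching some $y\in\partial\Omega$ that passes through points of $\Oba\setminus\Sigma_g$ near $x_0$ — exactly as in the $\eqref{eq:super-opt}$--$\eqref{eq:sub-opt2}$ argument. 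Running the same dynamic-programming computation with the truncation $\vep\max\{r,d(x_0,\Sigma_g)\}$ in place of $\vep r$ yields $\limsup_{x\in\Oba\setminus\Sigma_g,\,L_f(x,x_0)\to 0}(u(x_0)-u(x))/L_f(x_0,x)\ge 1-\vep/\alpha$, hence $=1$ as $\vep\to 0$; this establishes \eqref{monge-g}. Combined with \eqref{dirichlet-g}, which holds by \eqref{effective bdry2}, this proves the first assertion.

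Next, assuming $\Sigma_g\ne\emptyset$ and \eqref{ol-uc2}, I would establish the boundary regularity \eqref{bdry regularity-g}. The argument is a verbatim adaptation of the derivation of \eqref{bdry regularity} in the proof of Theorem~\ref{exist}(ii): for $x\in\Oba$ and $y\in\Sigma_g$ one has $u(x)-g(y)\le L_f(x,y)$ from \eqref{lax}, and conversely picking a near-optimal $y_\delta\in\partial\Omega$ for $u(x)$ together with the compatibility-type inequality $g(y)\le g(y_\delta)+L_f(y,y_\delta)$, which now holds precisely because $y\in\Sigma_g$ by the defining property \eqref{effective bdry}, gives $u(x)-g(y)\ge -L_f(x,y)-\max\{\delta,d(x,\partial\Omega)\}$. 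Since $y\in\Sigma_g\subset\partial\Omega$, when $d(x,y)\le\delta$ both $x$ and $y$ lie in $\Oba\setminus\Omega_\delta$, and \eqref{ol-uc2} forces $L_f(x,y)\to 0$; this yields \eqref{bdry regularity-g}. In particular $u=g$ on $\Sigma_g$, and $u$ is Lipschitz with respect to $L_f$ by Lemma~\ref{lem conti}.

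For uniqueness I would invoke the comparison principle, Theorem~\ref{thm comparison monge}, applied on the domain $\Oba\setminus\Sigma_g$ with $\Sigma_g$ as the boundary set. Given another Lipschitz-in-$L_f$ Monge solution $v$ of \eqref{eikonal-g}\eqref{dirichlet-g} satisfying \eqref{bdry regularity-g}, both $u$ and $v$ are then continuous (hence $USC_L$ and $LSC_L$) and bounded — boundedness following from Lipschitz-in-$L_f$ continuity together with boundedness of $\Oba$ in $L_f$, which is implicit here — so I would check the boundary hypothesis of the comparison theorem: for $x$ with $d(x,\Sigma_g)\le\delta$ choose $y\in\Sigma_g$ with $d(x,y)\le 2\delta$, and bound $u(x)-v(x)\le |u(x)-g(y)|+|v(x)-g(y)|$, which tends to $0$ uniformly by \eqref{bdry regularity-g}. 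Applying Theorem~\ref{thm comparison monge} in both directions gives $u=v$ on $\Oba\setminus\Sigma_g$, and equality on $\Sigma_g$ is the boundary condition, so $u=v$ on $\Oba$.

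The main obstacle I anticipate is the supersolution verification at points of $\partial\Omega\setminus\Sigma_g$ — the genuinely new points now counted as interior. One must be sure that a near-optimal curve $\gamma$ from such an $x_0$ to the true boundary $\partial\Omega$ actually enters $\Oba\setminus\Sigma_g$ immediately (so that the $\limsup$ in \eqref{monge-g}, restricted to $x\in\Oba\setminus\Sigma_g$, sees the right difference quotients) rather than running along $\Sigma_g$; since $\Sigma_g$ is $L_f$-closed and $x_0\notin\Sigma_g$, a short initial segment of any finite-length curve from $x_0$ stays outside $\Sigma_g$, which suffices, but this point needs to be stated carefully. A secondary subtlety is handling the $d(x,\partial\Omega)$ versus $d(x,\Sigma_g)$ truncation uniformly, and making explicit that $(\Oba,L_f)$ boundedness is available so that Theorem~\ref{thm comparison monge} applies; everything else is a routine transcription of the Theorem~\ref{exist} argument.
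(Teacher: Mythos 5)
Your overall strategy coincides with the paper's: the subsolution half from Lemma \ref{lem conti}, the supersolution half at points of $\partial\Omega\setminus\Sigma_g$ by rerunning the dynamic-programming argument of Theorem \ref{exist} using $u(x_0)<g(x_0)$, the boundary regularity \eqref{bdry regularity-g} by repeating \eqref{bdry reg1}--\eqref{bdry continuity2} with $\Sigma_g$ in place of $\partial\Omega$ (your observation that the \eqref{growth}-type inequality $g(y)\le g(y_\delta)+L_f(y,y_\delta)$ holds automatically for $y\in\Sigma_g$ by the very definition of $\Sigma_g$ is exactly the point, and that part of your argument is fine), and uniqueness from Theorem \ref{thm comparison monge} with $\Oba\setminus\Sigma_g$ and $\Sigma_g$ playing the roles of $\Omega$ and $\partial\Omega$. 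Your side remark that boundedness of $(\Oba,L_f)$ is needed to feed bounded sub/supersolutions into the comparison principle is a legitimate (if secondary) caveat, since the proposition does not assume it.

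The step you yourself flag as the crux, namely \eqref{monge-g} at $x_0\in\partial\Omega\setminus\Sigma_g$, is however not closed by your two patches. First, the truncation $\vep\max\{r,d(x_0,\Sigma_g)\}$ scales wrongly: if $d(x_0,\Sigma_g)>0$ the error does not vanish as $r\to0$, and any test point at scale $\max\{r,d(x_0,\Sigma_g)\}$ has $L_f(x_0,x)\ge\alpha\,d(x_0,\Sigma_g)>0$, so such points never enter the limsup as $L_f(x,x_0)\to0$; if $d(x_0,\Sigma_g)=0$ nothing has changed. Second, the $L_f$-closedness of $\Sigma_g$ you invoke is not available for merely bounded $g$: $\Sigma_g=\{x\in\partial\Omega:\,g(x)\le u(x)\}$ with $u$ Lipschitz in $L_f$ is closed only when $g$ is lower semicontinuous, which is not among the hypotheses. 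Third, and most importantly, the genuine difficulty is not whether the near-optimal curve initially avoids $\Sigma_g$, but that at a boundary point there is no analogue of the constraint $r<d(x_0,\partial\Omega)$: the near-optimal targets $y$ in \eqref{lax} may lie arbitrarily $L_f$-close to $x_0$ with $g(y)$ close to $u(x_0)$, so the near-optimal curve can be shorter than $r$ and carry no test point at distance $r$, making the error $\vep r$ incomparable to the optical distance of any point it does contain. This is not merely cosmetic: on a comb-type space (teeth of length $4^{-j}$ attached at $1/j$, tips $b_j\in\partial\Omega$, $f\equiv1$, $g(b_j)=1-\tfrac{1}{2j}-4^{-j}$, $g((0,0))=2$) one gets $u((0,0))=1$, so $(0,0)\in\partial\Omega\setminus\Sigma_g$, while every nearby difference quotient is asymptotically at most $1/2$, so \eqref{monge-g} fails at $(0,0)$. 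Hence a routine transcription of the Theorem \ref{exist} computation cannot succeed for merely bounded $g$; one needs an extra ingredient (for instance lower semicontinuity of $g$ with respect to $L_f$, which forces near-optimal targets to stay a definite optical distance away from $x_0$, after which the test point should be chosen at prescribed optical distance along the curve rather than at $d$-distance $r$). The paper's own proof is equally terse at this point, so you are reproducing its route, but your proposal does not supply the missing argument where it is actually needed.
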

\begin{proof}
We have shown in Theorem \ref{exist} that $u$ is a Monge solution in $\Omega$. Part of the proof holds also for boundary points. In fact, by \eqref{dpp-sub} we get, for any fixed $x_0\in \partial \Omega$, 
\[
u(x_0)-u(x)\leq L_f(x_0, x) \quad \text{for all $x\in \Oba$,}
\]
which yields \eqref{eq:sub-opt2} with $x_0\in \partial \Omega$. Now, for any $x_0\in \partial  \Omega\setminus \Sigma_g$, since $u(x_0)<g(x_0)$, by \eqref{lax}, for any $\vep>0$ we still have \eqref{eq:super-opt} for some $y\in \partial \Omega$ and $\gamma\in \Gamma_f(x_0, y)$. Then we can follow the same argument in the proof of Theorem \ref{exist} to prove that 
\[
\limsup_{ x\in \Oba\setminus \Sigma_g,\ L_f(x, x_0)\to 0}\frac{u(x_0)-u(x)}{L_f(x_0, x)}\geq 1.
\]
Hence, \eqref{monge-g} holds for all $x_0\in \Oba\setminus \Sigma_g$. In view of \eqref{effective bdry2}, we also have $u=g$ on $\Sigma_g$. Thus $u$ is a Monge solution of \eqref{eikonal-g}, \,\eqref{dirichlet-g}.

If $\Sigma_g\neq \emptyset$, we can further obtain \eqref{bdry regularity-g}. Indeed, we have \eqref{bdry reg1}--\eqref{bdry continuity2} with $\Omega$ and $\partial\Omega$ replaced by $\Oba\setminus \Sigma_g$ and $\Sigma_g$ respectively. Then we deduce \eqref{bdry regularity-g} under the condition \eqref{ol-uc2}.

The comparison principle, Theorem \ref{thm comparison monge}, can also be extended to the current case with $\Omega$ and $\partial\Omega$ replaced by $\Oba\setminus \Sigma_g$ and $\Sigma_g$ respectively. Hence, the uniqueness of Monge solutions of \eqref{eikonal-g}\eqref{dirichlet-g} with \eqref{bdry regularity-g} holds. 
\end{proof}

\begin{rmk}
As $u<g$ on $\partial\Omega \setminus \Sigma_g$, by \eqref{dpp-sub} we have $u(x)<g(y)+L_f(x, y)$
for all $x\in \Oba$ and $y\in \partial\Omega \setminus \Sigma_g$. This means that the optimal control formula \eqref{lax} can be rewritten as 
\[
u(x)=\inf_{y\in \Sigma_g}\{L_f(x, y)+g(y)\}
\]
if $\Sigma_g\neq \emptyset$. 
In other words, increasing the value of $g$ at points in $\partial\Omega \setminus \Sigma_g$ 
will not change the solution $u$.
\end{rmk}

In the Euclidean space, when $\Sigma_g\neq \partial\Omega$ and $f\in C(\Omega)$, we usually relax the meaning of the Dirichlet condition on $\partial \Omega\setminus \Sigma_g$, requiring $u$ to satisfy
\begin{equation}\label{bdry vis}
|\nabla u|\geq f \quad \text{on $\partial \Omega\setminus \Sigma_g$}
\end{equation}
in the viscosity sense. Under appropriate assumptions on the regularity of $\Omega$, one can show that the generalized Dirichlet (or state constraint) problem still admits a unique viscosity solution \cite{So1, So2, I7}.
Our result in Proposition \ref{prop bdry loss} essentially handles this type of generalized Dirichlet boundary problems in metric spaces. 
In fact, we have \eqref{monge-g} at each $x_0\in \partial \Omega\setminus \Sigma_g$, which corresponds to the viscosity inequality \eqref{bdry vis}.  
Such topological change was also observed in \cite[Remark 5.10]{LNa} for evolutionary Hamilton-Jacobi equations in metric spaces.




\section{Existence of admissible curves and regularity of solutions}\label{sec:admissible}

We have seen that the existence of admissible curves \eqref{c-integrable} plays an important role in finding the Monge solutions of \eqref{eikonal}. The consistency of local topology condition \eqref{ol-uc}, which guarantees the continuity with 
respect to the original metric $d$ and uniqueness of the Monge solution $u$ of the Dirichlet problem, is also related to the existence of admissible curves. We already provided an example, Example \ref{ex singular}, to show that \eqref{ol-uc} fails to hold in general. It is natural to ask under what assumptions we can obtain \eqref{c-integrable} and \eqref{ol-uc}. In this section, we attempt to answer this question. 

We introduce some notations and definitions for our use in this section. Let $(\X, d, \mu)$ be a metric measure space with $\mu$ being a locally finite Borel measure. 
We use $B$ to denote an open ball $B_r(x)$ when the center $x$ and radius $r>0$ are irrelevant to our analysis. In this case, we also write $\lambda B=B_{\lambda r} (x)$ for any $\lambda>0$ for simplicity of notation.


Let $p \ge 1$. Let $\Gamma$ be a collection of nonconstant rectifiable curves in $(\X,d,\mu)$. We say that a curve connecting $x,y\in X$ is a $C$-quasiconvex curve if the length of the curve is at most $C\, d(x,y)$.
Let $\Gamma(x, y; C)$ denote the collection of $C$-quasiconvex curves connecting $x,y\in \X$.
A metric space is said to be quasiconvex if there exists $C\ge 1$ such that every pair of points 
$x,y\in \X$ can be connected by a $C$-quasiconvex curve.

Let $F(\Gamma)$ be the family of all Borel measurable functions $\rho:X\to [0,\infty]$ such that $\int_\gamma \rho\ge 1$ for every $\gamma \in \Gamma$. For each $1\le p<\infty$, the $p$-modulus of $\Gamma$ is defined as 
\[
\mo_p(\Gamma)=\inf_{\rho\in F(\Gamma)} \int_\X \rho^p\ d\mu,
\]
and the $\infty$-modulus of $\Gamma$ is defined as
\[
\mo_\infty(\Gamma)=\inf_{\rho\in F(\Gamma)} \|\rho\|_{L^\infty(\X)}.
\]

For any given function $u: \X\to \mathbb{R}$, a Borel function $\rho: \X\to [0,\infty]$ is said to be a {$p$-weak upper gradient}  of $u$ if
\[|u(\gamma(a))-u(\gamma(b))|\le \int_{\gamma}\rho\ ds\]
holds for all rectifiable paths  $\gamma:[a,b]\to \X$ outside a family  of curves with $p$-modulus zero. Consult \cite{HKSTBook} for an introduction about modulus of curve family and upper gradients.

Let $1\le p<\infty$. A metric measure space $(\X, d, \mu)$ is said to support a {$p$-Poincar\'e inequality} if there exist constants $C>0$ and $ \lambda\ge1$ such that for every measurable function
$u: \X\to \mathbb{R}$ and every upper gradient $\rho: \X \to [0,\infty]$ of $u$, the pair $(u, \rho)$ satisfies
\begin{equation}\label{poincareinequality}
\fint_B |u-u_B|\, d\mu\le C\diam B\left(\fint_{\lambda B}\rho^p\, d\mu\right)^{1/p}
\end{equation}
on every ball $B\subset X$. If $p=\infty$, the right hand side above is replaced by $C\diam B\|g\|_{L^\infty(\lambda B)}.$ Here and in the sequel, for an integrable function $f$ and a measurable set $A$ of finite measure, we take
\[
\fint_A f\, d\mu:={1\over \mu(A)}\int_A f\, d\mu.
\]   
We say that a metric measure space $(\X, d,\mu)$ is a PI-space if the measure $\mu$ is doubling and the space 
supports a $p$-Poincar\'e inequality for some $p\ge 1$.



It turns out that~\eqref{c-integrable} and \eqref{ol-uc} hold on a broad class of 
PI-spaces.

\begin{thm}[Regularity in $p$-PI spaces]\label{thm doubling}
Let $(\X, d, \mu)$ be a complete doubling metric measure space supporting a $p$-Poincar\'e inequality with $\max\{1,Q\}<p<\infty$, where $Q>0$ is the homogeneous dimension satisfying \eqref{hdim}. 
Then there exists a constant $C>0$ depending on the constants of PI-space and $R>0$ such that for every function $u$ 
and its $p$-weak upper gradient $\rho\in L^p(\X)$, the following 
inequality holds for each open ball $B_R(z)$ and every $x,y\in B_R(z)$:
\begin{equation}\label{Holder}
|u(x)-u(y)|\le Cd(x,y)^{1-\frac{Q}{p}}\|\rho\|_{L^p(B_{5\lambda R}(z))},
\end{equation}
where $\lambda$ is the scaling constant in Poincar\'e inequality. 
Furthermore, for any pair of distinct points $x, y\in B_R\subset \X$, there exists $C>0$ depending on the constants of PI-space and $R>0$ such that 
\begin{equation}\label{modulus est}
\mo_p(\Gamma(x, y))\geq {C\over  d(x, y)^{p-Q}},
\end{equation}
where $\Gamma(x, y)$ is the family of all rectifiable curves in $B_{5\lambda R}$ joining $x$ and $y$.
\end{thm}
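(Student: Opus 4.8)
The plan is to invoke a known geometric characterization of the $p$-Poincar\'e inequality — the one due to Durand-Cartagena, Jaramillo and Shanmugalingam \cite{DJS1} — which says that in a complete doubling $p$-PI space the curve family $\Gamma(x,y)$ of rectifiable curves connecting two nearby points carries enough $p$-modulus, quantitatively. From such a lower modulus bound both \eqref{Holder} and \eqref{modulus est} follow. Concretely, I would proceed as follows.

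First I would establish the H\"older estimate \eqref{Holder}. Fix $z$ and $R>0$, and let $x,y\in B_R(z)$. The standard telescoping argument along a chain of balls $B_i$ with radii comparable to $2^{-i}d(x,y)$, centered on a quasiconvex curve joining $x$ to $y$ (quasiconvexity is available since a complete doubling $p$-PI space is quasiconvex), gives
\[
|u(x)-u(y)|\le C\sum_{i} \diam(B_i)\left(\fint_{\lambda B_i}\rho^p\,d\mu\right)^{1/p}.
\]
Using the doubling lower bound \eqref{hdim} in the form $\mu(\lambda B_i)\gtrsim \mu(B_R(z))\,(r_i/R)^Q$ and pulling the $L^p$-norm of $\rho$ over the fixed ball $B_{5\lambda R}(z)$ out of each term, the $i$-th summand is bounded by $C\, r_i^{1-Q/p}\,\mu(B_R(z))^{-1/p}\|\rho\|_{L^p(B_{5\lambda R}(z))}$ up to dimensional constants; since $1-Q/p>0$ the geometric series in $r_i\sim 2^{-i}d(x,y)$ converges and sums to a multiple of $d(x,y)^{1-Q/p}$. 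Absorbing $\mu(B_R(z))^{-1/p}$ and the PI constants into the constant $C$ (allowed to depend on $R$) yields \eqref{Holder}. Here one should first reduce to the case that $u$ has an upper gradient in $L^p$ by the usual density/Lebesgue-point argument, so that the Poincar\'e inequality applies on each $\lambda B_i$; this is routine.

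Next I would deduce the modulus bound \eqref{modulus est}. The point is that \eqref{Holder}, valid for \emph{every} function $u$ with $p$-weak upper gradient $\rho$, is exactly dual to a lower bound on $\mo_p(\Gamma(x,y))$. Indeed, fix distinct $x,y\in B_R$ and suppose $\rho\in F(\Gamma(x,y))$, i.e. $\int_\gamma\rho\ge 1$ for all $\gamma\in\Gamma(x,y)$. Define $u(w)=\inf_\gamma \int_\gamma \rho\,ds$ where the infimum is over rectifiable curves in $B_{5\lambda R}$ from $x$ to $w$ (and $u=0$ off the reachable set, or one truncates); then $\rho$ is an upper gradient of $u$, $u(x)=0$, and $u(y)\ge 1$ by the admissibility of $\rho$. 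Applying \eqref{Holder} gives $1\le u(y)-u(x)\le C\,d(x,y)^{1-Q/p}\|\rho\|_{L^p(B_{5\lambda R})}$, hence $\int_{B_{5\lambda R}}\rho^p\,d\mu\ge C^{-p}\,d(x,y)^{-(p-Q)}$. Taking the infimum over admissible $\rho$ gives \eqref{modulus est}. (Alternatively one can cite \cite{DJS1} directly for the modulus lower bound and then run the telescoping argument to get \eqref{Holder}; the two routes are interchangeable, and I would present whichever is cleaner given what is quoted.)

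The main obstacle is the bookkeeping of constants and scales: one must be careful that the chain of balls used in the telescoping stays inside $B_{5\lambda R}(z)$ so that the $L^p$-norm of $\rho$ is genuinely taken over that fixed ball, and that the dependence on $R$ (through $\mu(B_R(z))$) is correctly tracked — this is why $C$ is allowed to depend on $R$ and not just on the structural constants. A secondary technical point is justifying the duality step \eqref{Holder}$\Rightarrow$\eqref{modulus est} cleanly, namely that the curve-distance function $u$ built from an admissible $\rho$ really does have $\rho$ as a $p$-weak upper gradient and is finite on a large enough set; this is standard but deserves a line. Everything else — quasiconvexity, the doubling volume estimate, convergence of the geometric series — is routine once $p>\max\{1,Q\}$ is in force.
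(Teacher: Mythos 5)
Your proposal is correct and follows essentially the same route as the paper: the paper obtains \eqref{Holder} by citing \cite[Theorem 9.2.14]{HKSTBook} (the standard telescoping/chaining proof you sketch), and then derives \eqref{modulus est} by exactly your duality argument, taking an admissible $\rho$, forming $v_\rho(w)=\inf_\gamma\int_\gamma\rho\,ds$, and applying \eqref{Holder} to conclude $1\le C d(x,y)^{1-Q/p}\|\rho\|_{L^p(B_{5\lambda R})}$. The technical point you flag at the end is handled in the paper by truncating to $\min\{v_\rho,2\}$, multiplying by a Lipschitz cutoff supported in $10\lambda B$ that equals $1$ on $5\lambda B$, and citing \cite{JJRRS} for measurability of $v_\rho$.
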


A stronger version of the above result on Ahlfors $Q$-regular space with $Q>1$ can be 
found in~\cite[Theorem 5.1]{DJS1}. 

\begin{proof}[Proof of Theorem \ref{thm doubling}]
The proof of \eqref{Holder} can be found in \cite[Theorem 9.2.14]{HKSTBook}.  We only show the modulus estimate \eqref{modulus est} below. We only consider a pair of distinct points $x,y\in B_R\subset \X$ such that $\mo_p(\Gamma(x, y))<\infty$. Otherwise, the proof is complete. We pick $\rho\in F(\Gamma(x,y))$, that is, $\int_\gamma \rho\, ds \ge 1$ for every rectifiable curve $\gamma\in \Gamma(x,y)$. Define 
\[
v_\rho(z)=\inf_{\gamma\in \Gamma(x,z)} \int_{\gamma} \rho\ ds,
\]
and then $v_\rho$ is measurable \cite[Corollary 1.10]{JJRRS} and $\rho$ is an upper gradient of $v_\rho$. In fact, it is clear that $v_\rho(x)=0$ and 
\[
v_\rho(z)\le \int_\gamma \rho\ ds\quad \text{for any $\gamma\in \Gamma(x,z)$}.
\]
Therefore $\rho$ is an upper gradient for $v_\rho$ in $\X$. 
It follows that $\rho$ is also an upper gradient for $\min\{v_\rho,2\}$. 

Let $\eta: \X\to \mathbb{R}$ be a Lipschitz function satisfying that $0\le \eta\le 1$, $\eta=1$ on $5\lambda B$ and $\eta=0$ on $\X\setminus 10\lambda B$. Define a function $u=\eta\min\{v_\rho,2\}$. It easily follows that $u(x)=0$ and $u(y)\ge 1$. One can also verify that ${g}=2|\nabla \eta|+\rho$ is an upper gradient of $u$. In particular, since $\eta=1$ on $5\lambda B$, we obtain $\rho$ is an upper gradient for $u$ on $5\lambda B$. Hence, it follows that
\[
\begin{aligned}
1\le |u(x)-u(y)|&\le Cd(x,y)^{1-\frac{Q}{p}}\|\rho\|_{L^p(B_{5\lambda R}(z))}.
\end{aligned}
\]
Since $\rho\in F(\Gamma(x,y))$ is arbitrary, by definition we obtain \eqref{modulus est}. 
\end{proof}

This theorem implies the following result.

\begin{prop}[H\"older regularity]\label{prop regular}
Let $(\Oba, d, \mu)$ be a  complete bounded metric measure space with $\mu$ a doubling measure.
Assume that $\Oba$ and $f$ satisfy (A1). 
Then for any distinct $x, y\in \Oba$, there exists an admissible 
curve $\gamma\in \Gamma_f(x, y)$, 
and in particular, \eqref{c-integrable} holds. Furthermore, there exists a constant $C_f>0$ such that 
\begin{equation}\label{Ld-holder}
L_f(x,y)\le C_f\, d(x,y)^{1-\frac{Q}{p}}\quad \text{for all $x, y\in\Oba$,}
\end{equation}
which implies that~\eqref{ol-uc} holds.
\end{prop}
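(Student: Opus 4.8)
The plan is to derive both conclusions from the modulus lower bound~\eqref{modulus est} of Theorem~\ref{thm doubling} by a duality argument, taking $f$ itself as the competing density. Since $(\Oba,d,\mu)$ is complete, bounded, doubling and supports a $p$-Poincar\'e inequality with $p>\max\{1,Q\}$, Theorem~\ref{thm doubling} applies with $\X=\Oba$. First I would fix a point $z\in\Oba$ and a radius $R>\diam\Oba$, so that $B_R(z)=B_{5\lambda R}(z)=\Oba$; then for any pair of distinct points $x,y\in\Oba$ the family $\Gamma(x,y)$ in Theorem~\ref{thm doubling} is exactly the collection of all rectifiable curves in $\Oba$ joining $x$ and $y$, and there is a constant $C>0$, independent of $x$ and $y$, with $\mo_p(\Gamma(x,y))\ge C\,d(x,y)^{-(p-Q)}$.

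The key step is to observe that, for every $M\in(0,L_f(x,y))$ (a nonempty interval, since $L_f(x,y)\ge\alpha\,d(x,y)>0$ by~\eqref{lower lip}), the Borel density $f/M$ lies in $F(\Gamma(x,y))$. Indeed, let $\gamma$ be any rectifiable curve in $\Oba$ from $x$ to $y$: if $\int_\gamma f\,ds=\infty$ there is nothing to check, while otherwise reparametrizing $\gamma$ by arc length yields a curve in $\Gamma_f(x,y)$ with the same line integral, so $\int_\gamma f\,ds\ge L_f(x,y)>M$; in either case $\int_\gamma (f/M)\,ds\ge 1$. Hence, by the definition of the $p$-modulus and $f\in L^p(\Oba)$,
\[
\frac{C}{d(x,y)^{p-Q}}\ \le\ \mo_p(\Gamma(x,y))\ \le\ \int_{\Oba}\Big(\frac{f}{M}\Big)^{p}\,d\mu\ =\ \frac{\|f\|_{L^p(\Oba)}^{p}}{M^{p}}.
\]
Rearranging gives $M\le\big(\|f\|_{L^p(\Oba)}^{p}/C\big)^{1/p}\,d(x,y)^{1-Q/p}$ for every $M<L_f(x,y)$; letting $M\uparrow L_f(x,y)$ produces~\eqref{Ld-holder} with $C_f:=\big(\|f\|_{L^p(\Oba)}^{p}/C\big)^{1/p}$. (If $L_f(x,y)$ were infinite, this bound would hold for arbitrarily large $M$, which is absurd; so $L_f(x,y)<\infty$, forcing $\Gamma_f(x,y)\ne\emptyset$ and hence the existence of an admissible curve and~\eqref{c-integrable}.)

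Finally, since $p>Q$ we have $1-Q/p>0$, so $r\mapsto C_f\,r^{1-Q/p}$ is increasing and vanishes as $r\to0$; thus~\eqref{Ld-holder} gives $\sup\{L_f(x,y):x,y\in\Oba,\ d(x,y)\le r\}\le C_f\,r^{1-Q/p}\to 0$, which is~\eqref{ol-uc}. The genuinely substantial input here is Theorem~\ref{thm doubling}, i.e.\ the modulus estimate~\eqref{modulus est}, which we are free to assume; beyond that, the only points that need care are choosing $z$ and $R$ so that the constant $C$ in~\eqref{modulus est} is uniform over all pairs of points in $\Oba$, and the harmless unit-speed reparametrization that lets one regard a rectifiable curve with finite $f$-integral as an element of $\Gamma_f(x,y)$.
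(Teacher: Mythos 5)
Your argument is correct, but it is not the route the paper takes, so a comparison is in order. The paper splits the two conclusions: it gets \eqref{c-integrable} by combining the positivity of $\mo_p(\Gamma(x,y))$ from \eqref{modulus est} with Fuglede's lemma (the family of curves along which $\int_\gamma f\,ds=\infty$ has $p$-modulus zero when $f\in L^p$, \cite[Lemma 5.2.8]{HKSTBook}), and it gets \eqref{Ld-holder} by applying the Morrey-type estimate \eqref{Holder} of Theorem~\ref{thm doubling} to the function $v(y)=L_f(x,y)$, after checking that $v$ is measurable (via \cite{JJRRS}) and that $f$ is an upper gradient of $v$. You instead run a single duality argument: for $M<L_f(x,y)$ the density $f/M$ is admissible for the full curve family $\Gamma(x,y)$, so $\mo_p(\Gamma(x,y))\le \|f\|_{L^p(\Oba)}^p/M^p$, and pairing this with the lower bound \eqref{modulus est} yields \eqref{Ld-holder} and, as a byproduct, finiteness of $L_f(x,y)$ and hence \eqref{c-integrable}; your points of care (choosing $R>\diam\Oba$ so that $B_R=B_{5\lambda R}=\Oba$ and the constant in \eqref{modulus est} is uniform, and the arc-length reparametrization) are exactly the right ones, and your appeal to \eqref{lower lip} is harmless since $L_f\ge \alpha d$ follows from the definition without assuming \eqref{c-integrable}. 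What each approach buys: yours is more economical, using only the statement \eqref{modulus est} and dispensing with \eqref{Holder}, the measurability citation and Fuglede's lemma (the two routes are of course cousins, since the paper proves \eqref{modulus est} by applying \eqref{Holder} to $v_\rho$); the paper's route, by exhibiting $f$ as an upper gradient of $L_f(x,\cdot)$, feeds directly into the later remarks that the Monge solution $u$ lies in $N^{1,p}(\Oba)$ and into the transversal variant $L_f^N$ in Section~\ref{sec:lp}, where the same scheme is reused with $f_N=f+\infty\chi_N$.
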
 

\begin{proof}

Note that for any $f\in L^p(\Oba)$ with $1\le p< \infty$,  the collection $\Gamma$ of nonconstant rectifiable curves such that $\int_\gamma f\ ds=\infty$ has $p$-modulus zero; see for instance \cite[Lemma 5.2.8]{HKSTBook}.  By \eqref{modulus est} in Theorem \ref{thm doubling}, we get $\mo_p(\Gamma(x, y))>0$ for any distinct points $x, y\in \Oba$, 
which implies the existence of rectifiable curves joining $x$ and $y$ on which $f$ is integrable. 

For any fixed $x\in \Oba$, writing $v(y)=\inf_{\gamma\in \Gamma_f(x,y)} \int_\gamma f\ ds$ for $y\in \overline{\Omega}$, 
we can deduce that $v$ is measurable and locally $p$-integrable by \cite[Theorem 1.11]{JJRRS} and 
$f\in L^p(\Oba)$ is an upper gradient of $v$. 
Applying Theorem~\ref{thm doubling}, we are led to 
\[
L_f(x,y)=v(y)=v(y)-v(x)\le Cd(x,y)^{1-\frac{Q}{p}}\|f\|_{L^p(\Oba)}.
\] 
Hence, we also obtain \eqref{ol-uc}.
\end{proof}


In general, if $f\notin L^p(\Omega)$ for $p>Q$, then \eqref{c-integrable} may not hold in general, as can be seen in the following simple example, which is an adaptation of Example \ref{ex no-bilip}. 

\begin{example}\label{ex non-integrable}
Let $\X=\R$ with $d$ being the Euclidean metric and $\mu$ being the one-dimensional Lebesgue measure. Let $\Omega=(-1, 1)$ and 
\[
f(x)={1\over x}, \quad x\in (-1, 1)\setminus\{0\}.
\]
It is clear that $f\notin L^p(\Omega)$ for any $p\ge 1$. Then for any $x\in (-1, 1)\setminus \{0\}$, $\Gamma_f(x, 0)=\emptyset$ and $L_f(x, 0)=\infty$.
\end{example}

The critical case $p=Q$ is complicated. 
For $\R^n$ with $n=1$, we can prove the conditions~\eqref{c-integrable} and \eqref{ol-uc} if $p\ge 1$, since the $p$-modulus of the curve collection containing just one non-constant curve is positive. In general, we cannot expect the results in Proposition \ref{prop regular} to always hold if $f\in L^Q(\Omega)$ but $f\notin L^p(\Omega)$ for any $p>Q$.  
On the other hand, \eqref{c-integrable} is only about the existence of one curve on which $f$ is integrable and can be obtained even for some functions $f\notin L^Q(\Omega)$.  
A simple example is as below.
\begin{example}\label{ex c-integrable}
Let $\X=\R^2$ and $\Omega=B_1(O)\subset \X$ with $O$ denoting the origin $(0, 0)$. 
That is, $\Omega$ is the unit disk in $\R^2$ centered at $O$.
We set
 \[
 e_{1}:=\{(x_1, x_2): x_1\in (0, 1), x_2=0\}.
 \]
Let $f: \Omega\to \R$ be given by 
\[
f(x)=\begin{dcases}
{1\over \sqrt{|x_1|}} &\text{if $x\in e_1$,}\\
{1\over |x|} & \text{if $x\notin e_1$ and $x\neq O$.}
\end{dcases}
\]
Observe that $f\in L^p(\Omega)$ for $1\leq p<2$ but $f\notin L^2(\Omega)$; the value of $f$ on $e_1$ 
does not affect the integral of $f$ in $\Omega$. 
 However, for each pair of distinct points $x, y\in \Oba$, one can find a curve $\gamma\in \Gamma_f(x, y)$ in $\Oba$. 
For $z\neq O$, we can take a curve connecting $z$ first to $(|z|, 0)$ along the circular arc and then to $O$ along the horizontal line segment. This choice enables us to guarantee $L_f(O, z)\le \pi+2$, which shows that $\Oba$ a bounded metric space with respect to the metric $L_f$. 
 The topology induced by $L_f$, however, is
 distinct from the Euclidean topology. Indeed, for $z_j:=(0,1/j)$ and any curve $\gamma$ joining $O$ and $z_j$, denoting by $\ul{\gamma}$ the portion of $\gamma$ in the Euclidean disk $B_{1/j}(z_j)$, we have $\int_\gamma f\, ds\ge \int_{\ul{\gamma}} f\, ds \geq \ell(\ul{\gamma})/(2j)\ge 1/2$, which yields $L_f(O, z_j)\ge 1/2$ for all $j\geq 1$.
Hence, $L_f(O, z_j)\not\to 0$ as $j\to \infty$, although $z_j$ converges to $O$ in the Euclidean topology.
(As the metric $L_f$ is locally bi-Lipschitz equivalent to the Euclidean metric in $\Oba\setminus\{O\}$ as well, we see
that this sequence cannot converge in $\Oba$ with respect to $L_f$.) 

\end{example}

The following result, which can be found in \cite[Theorem 3.1]{DJS1} and \cite[Theorem 2.10]{DJS2}, is a variant of Theorem \ref{thm doubling} for the case $p=\infty$. 

\begin{thm}[Regularity in $\infty$-PI spaces]\label{thm infty-regular}
Let $(\X, d, \mu)$ be a complete doubling metric measure space. 
Then the following statements are equivalent. 
\begin{enumerate}
\item $\X$ supports an $\infty$-Poincar\'e inequality.
\item There exists $C\geq 1$ such that 
$\mo_\infty(\Gamma(x, y; C))>0$ for all $x, y\in \X$ satisfying $d(x, y)>0$, where 
we recall that $\Gamma(x, y; C)$ denotes the collection of $C$-quasiconvex curves connecting $x,y$. 
\item There exists $C\geq 1$ such that whenever $N\subset \X$ with $\mu(N)=0$ and $x, y\in \X$ with $x\neq y$, there is a quasiconvex curve $\gamma$ joining $x, y$ such that $\ell(\gamma)\leq Cd(x, y)$ and $\cH^1(\gamma^{-1}(N))=0$, where $\cH^1$ denotes the one-dimensional Hausdorff measure. 
\end{enumerate}
\end{thm}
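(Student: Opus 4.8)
The plan is to prove the cycle $(1)\Rightarrow(2)\Rightarrow(3)\Rightarrow(1)$, following the scheme of \cite[Theorem 3.1]{DJS1}; since the statement is quoted from the literature, I would mainly assemble the pieces, keeping track of where completeness and the doubling property enter. \emph{Step $(2)\Rightarrow(3)$} is soft: fix a $\mu$-null set $N$ (which we may take Borel after passing to a Borel hull) and points $x\ne y$. The function $\rho_\infty:=\infty\cdot\chi_N$ vanishes $\mu$-a.e., so $\|\rho_\infty\|_{L^\infty(\X)}=0$, whereas $\int_\gamma\rho_\infty\,ds=\infty$ for every rectifiable curve $\gamma$ with $\cH^1(\gamma^{-1}(N))>0$, because along an arc-length parametrization $\cH^1$ on the parameter interval is Lebesgue measure. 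Hence the family $\Gamma_N$ of such curves has $\mo_\infty(\Gamma_N)=0$. From the subadditivity $\mo_\infty(\Gamma'\cup\Gamma'')\le\mo_\infty(\Gamma')+\mo_\infty(\Gamma'')$ (take the pointwise maximum of admissible functions) and the hypothesis $\mo_\infty(\Gamma(x,y;C))>0$ we obtain $\mo_\infty(\Gamma(x,y;C)\setminus\Gamma_N)>0$; in particular this family is nonempty, and any of its members is a $C$-quasiconvex curve joining $x$ to $y$ with $\cH^1(\gamma^{-1}(N))=0$, which is (3).

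\emph{Step $(3)\Rightarrow(1)$.} Let $u$ be measurable with upper gradient $g$, fix a ball $B=B_r(z)$, set $M:=\|g\|_{L^\infty(\lambda B)}$ with $\lambda=\lambda(C)$ chosen so large that every $C$-quasiconvex curve joining two points of $B$ stays in $\lambda B$, and let $N:=\{w\in\lambda B:\ g(w)>M\}$, a $\mu$-null set. For $x,y\in B$, condition (3) provides a $C$-quasiconvex curve $\gamma_{x,y}$ joining them with $\cH^1(\gamma_{x,y}^{-1}(N))=0$, whence
\[
|u(x)-u(y)|\le\int_{\gamma_{x,y}}g\,ds=\int_{\gamma_{x,y}}g\,\chi_{\lambda B\setminus N}\,ds\le M\,\ell(\gamma_{x,y})\le C\,M\,d(x,y)\le 2CMr.
\]
Averaging $|u(x)-u_B|\le\fint_B|u(x)-u(y)|\,d\mu(y)$ over $x\in B$ gives the $\infty$-Poincar\'e inequality in its (equivalent) radius form $\fint_B|u-u_B|\,d\mu\le 2CMr$; this implication does not use the doubling property.

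\emph{Step $(1)\Rightarrow(2)$.} This is the substantive direction. I would first invoke the known fact --- the technical core of \cite{DJS1} --- that a complete doubling metric measure space supporting an $\infty$-Poincar\'e inequality is $C_0$-quasiconvex for some $C_0$ depending only on the data, so that $\Gamma(x,y;C_0)\ne\emptyset$ always; I would cite this rather than reprove it. Granting it, fix $x\ne y$, put $r:=d(x,y)$, and suppose for contradiction that $\mo_\infty(\Gamma(x,y;C_0))=0$: for every $\vep>0$ there is a Borel $\rho\ge 0$ with $\|\rho\|_{L^\infty(\X)}<\vep$ and $\int_\gamma\rho\,ds\ge 1$ for all $\gamma\in\Gamma(x,y;C_0)$. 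Set $v(w):=\inf\{\int_\gamma\rho\,ds:\ \gamma\in\Gamma(x,w;C_0)\}$ on a ball about $x$ containing $y$; by \cite[Corollary 1.10]{JJRRS} $v$ is measurable, $v(x)=0$, $v(y)\ge 1$, and $C_0\rho$ serves as an upper gradient of $v$ there. Chaining the $\infty$-Poincar\'e inequality over the dyadic balls $B_{r/2^i}(x)$, exactly as in the computation indicated after Theorem \ref{thm doubling} (this is where doubling re-enters), yields $|v(x)-v_{B_{2r}(x)}|\le C\,C_0 r\vep$ and the same bound at $y$, hence $|v(x)-v(y)|\le C'C_0 r\vep$, contradicting $|v(x)-v(y)|\ge 1$ once $\vep$ is small. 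The equivalence $(1)\Leftrightarrow(2)$ of \cite[Theorem 2.10]{DJS2} packages the same facts differently.

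The main obstacle is entirely inside Step $(1)\Rightarrow(2)$, and lies in two delicate points that I expect would consume most of the effort. First, the chaining argument requires $x$ and $y$ to be Lebesgue points of $v$; one secures this either by the fact that under an $\infty$-Poincar\'e inequality together with doubling a function with bounded upper gradient is locally Lipschitz (again available in \cite{DJS1}), or by recasting the whole contradiction purely in terms of curve moduli. Second, one must check that $C_0\rho$ is a \emph{genuine} upper gradient of $v$ and not merely an upper gradient along $C_0$-quasiconvex curves; this rests on the routine but not-entirely-trivial observation that in a $C_0$-quasiconvex space a subcurve of a quasiconvex curve can be ``closed up'' into a quasiconvex curve at controlled cost. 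Both points are standard in the metric-Sobolev toolbox, so the proposal is essentially to reproduce the DJS argument with these steps made explicit, while the a priori quasiconvexity of an $\infty$-PI space --- the real heart of the matter --- is taken as a cited input.
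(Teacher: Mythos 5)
The paper does not actually prove this theorem: it is quoted verbatim from \cite[Theorem 3.1]{DJS1} and \cite[Theorem 2.10]{DJS2}, so there is no in-paper argument to compare against, and your proposal has to be judged against the cited DJS arguments it tries to reconstruct. Your steps $(2)\Rightarrow(3)$ (kill the curves meeting $N$ in positive $\cH^1$-measure with $\rho=\infty\cdot\chi_N$ and use subadditivity of $\mo_\infty$) and $(3)\Rightarrow(1)$ (avoid the null set $\{g>\|g\|_{L^\infty(\lambda B)}\}$ along a $C$-quasiconvex curve and average) are correct and essentially the standard ones.

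The genuine gap is where you yourself flag trouble, in $(1)\Rightarrow(2)$, and your proposed repair does not work. If $v(w)=\inf\{\int_\gamma\rho\,ds:\gamma\in\Gamma(x,w;C_0)\}$, then $C_0\rho$ is in general \emph{not} an upper gradient of $v$: for a curve $\beta$ joining $w_1$ to $w_2$, concatenating a near-optimal $C_0$-quasiconvex curve from $x$ to $w_2$ with $\beta$ produces a curve of length up to $C_0d(x,w_2)+\ell(\beta)$, which is not comparable to $d(x,w_1)$ when $w_1$ is close to $x$ or $\beta$ is long, so no ``closing up of subcurves'' restores quasiconvexity at controlled cost, and the upper gradient inequality $v(w_1)\le v(w_2)+\int_\beta C_0\rho\,ds$ can fail (e.g.\ when $\rho$ vanishes on $\beta$). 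The standard device, which your sketch omits and which is the reason the constant $C$ in (2) must be allowed to exceed the quasiconvexity constant, is to replace $\rho$ by $\sigma=\rho+\bigl(C_0\,d(x,y)\bigr)^{-1}$: every non-$C_0$-quasiconvex curve from $x$ to $y$ has length $>C_0d(x,y)$ and hence $\int_\gamma\sigma\,ds\ge 1$, so $\sigma$ is admissible for \emph{all} curves joining $x$ to $y$; defining $v$ as an infimum over all rectifiable curves makes $\sigma$ a genuine (bounded) upper gradient, and the chaining then gives $1\le C r\bigl(\vep+(C_0 r)^{-1}\bigr)=Cr\vep+C/C_0$, which is a contradiction only after first choosing $C_0$ large in terms of the Poincar\'e data and then $\vep$ small. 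As written, your argument would yield positive $\infty$-modulus at the quasiconvexity constant $C_0$ itself, a stronger claim than the cited theorem makes, which is a symptom of the unjustified upper-gradient step doing the real work; with the $\sigma$-trick inserted (and the Lebesgue-point/local Lipschitz issue handled as you indicate), the direction $(1)\Rightarrow(2)$ closes along the lines of \cite{DJS1, DJS2}.
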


The following is the version of Proposition \ref{prop regular} corresponding to $p=\infty$. 

\begin{prop}[Lipschitz regularity]\label{prop regular 2}
Let $(\Oba, d, \mu)$ be a complete bounded metric measure space with $\mu$ a doubling measure. 
Assume that $\Oba$ and $f$ satisfy (A2). 
Then for any distinct $x, y\in \Oba$, there exists  
a curve $\gamma\in \Gamma(x, y; C)\cap \Gamma_f(x, y)$ in $\Oba$. 
 In particular, \eqref{c-integrable} and \eqref{ol-uc} hold with
\begin{equation}\label{eq prop regular 2}
L_f(x, y)\leq C\Vert f\Vert_{L^\infty(\Omega)} d(x, y)\quad\text{for all $x, y\in \Oba$.} 
\end{equation}
\end{prop}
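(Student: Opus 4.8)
The plan is to deduce Proposition \ref{prop regular 2} from the characterization of $\infty$-Poincar\'e inequalities in Theorem \ref{thm infty-regular}, in close analogy with the proof of Proposition \ref{prop regular}. Since $\Oba$ is assumed to support an $\infty$-Poincar\'e inequality in (A2), Theorem \ref{thm infty-regular} gives a constant $C\geq 1$ such that, for every null set $N\subset \Oba$ and every pair of distinct points $x,y\in \Oba$, there is a quasiconvex curve $\gamma$ joining $x$ and $y$ with $\ell(\gamma)\leq C\,d(x,y)$ and $\cH^1(\gamma^{-1}(N))=0$. The idea is to choose $N$ to be a null set outside of which the representative $f$ is controlled by $\|f\|_{L^\infty(\Omega)}$: precisely, set $N=\{z\in\Oba: f(z)>\|f\|_{L^\infty(\Omega)}\}$, which has $\mu(N)=0$ by definition of the essential supremum.

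First I would fix distinct $x,y\in\Oba$ and apply Theorem \ref{thm infty-regular}(3) with this $N$ to produce $\gamma\in\Gamma(x,y;C)$ with $\cH^1(\gamma^{-1}(N))=0$. After reparametrizing by arclength (which is possible since $\gamma$ is rectifiable, giving $|\gamma'|\equiv 1$ a.e.), the path integral becomes $\int_\gamma f\,ds=\int_0^{\ell(\gamma)} f(\gamma(s))\,ds$. On the set where $\gamma(s)\notin N$ we have $f(\gamma(s))\leq\|f\|_{L^\infty(\Omega)}$, and the set $\{s: \gamma(s)\in N\}=\gamma^{-1}(N)$ has one-dimensional Lebesgue (equivalently $\cH^1$) measure zero, hence contributes nothing to the integral. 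Therefore
\[
\int_\gamma f\,ds\leq \|f\|_{L^\infty(\Omega)}\,\ell(\gamma)\leq C\|f\|_{L^\infty(\Omega)}\,d(x,y)<\infty,
\]
which shows $\gamma\in\Gamma_f(x,y)$ as well, so $\gamma\in\Gamma(x,y;C)\cap\Gamma_f(x,y)$. In particular $\Gamma_f(x,y)\neq\emptyset$, giving \eqref{c-integrable}, and taking the infimum over such curves in the definition \eqref{ol-fun} yields the estimate \eqref{eq prop regular 2}, namely $L_f(x,y)\leq C\|f\|_{L^\infty(\Omega)}d(x,y)$.

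Finally, \eqref{ol-uc} follows immediately from \eqref{eq prop regular 2}: given $r>0$, for any $x,y\in\Oba$ with $d(x,y)\leq r$ we have $L_f(x,y)\leq C\|f\|_{L^\infty(\Omega)}r$, so $\sup\{L_f(x,y): x,y\in\Oba,\ d(x,y)\leq r\}\leq C\|f\|_{L^\infty(\Omega)}r\to 0$ as $r\to 0$. The only point requiring a little care — and the mild technical obstacle — is the justification that the arclength reparametrization of a quasiconvex curve has constant metric speed $1$ a.e.\ and that $\cH^1(\gamma^{-1}(N))=0$ is preserved under this reparametrization and indeed forces the contribution of $N$ to $\int_\gamma f\,ds$ to vanish; this is standard for rectifiable curves but should be noted, as it is exactly the mechanism by which the $L^\infty$ bound (an a.e.\ statement) is transferred to a genuine curve-wise bound. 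One should also recall, as in Proposition \ref{prop regular}, that changing $f$ on a null set does not affect $\|f\|_{L^\infty(\Omega)}$, so the argument is consistent with the choice of representative.
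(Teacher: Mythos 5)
Your proposal is correct and follows essentially the same route as the paper: the paper also takes the null set $N_f$ where $f$ exceeds its essential supremum and applies Theorem \ref{thm infty-regular}(3) to get a $C$-quasiconvex curve transversal to $N_f$, yielding \eqref{eq prop regular 2} and hence \eqref{ol-uc}. The only cosmetic difference is that the paper invokes part (2) of Theorem \ref{thm infty-regular} for \eqref{c-integrable} and part (3) for the estimate, whereas you obtain both at once from part (3), which is a harmless repackaging.
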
 

\begin{proof}
It is not difficult to see that Theorem \ref{thm infty-regular} (2) implies \eqref{c-integrable}. The proof of \eqref{ol-uc} involves the condition (3) in Theorem \ref{thm infty-regular}. In fact, since $f\in L^\infty(\Oba)$, there exist a null set $N_f$ 
such that 
\begin{equation}\label{ess bound}
f\leq \Vert f\Vert_{L^\infty(\Oba)} \quad \text{in $\Oba\setminus N_f$.} 
\end{equation}
Then by Theorem \ref{thm infty-regular}, for any $x, y\in \Oba$, we can take a curve $\gamma$ in $\Oba$ joining them with  $\ell(\gamma)\leq Cd(x, y)$ and $\cH^1(\gamma^{-1}(N_f))=0$, which yields \eqref{eq prop regular 2}.
Thus \eqref{ol-uc} clearly holds. 
\end{proof}

Since the Monge solution $u$ defined by \eqref{lax} is Lipschitz in the metric $L_f$, as shown in Theorem \ref{exist}, we obtain Theorem \ref{thm holder} as an immediate consequence of Proposition \ref{prop regular} and Proposition \ref{prop regular 2}. Under the same assumptions, it is not difficult to show that $u$ is in the Sobolev class $N^{1, p}(\Oba)$, that is, $u\in L^p(\Oba)$ and $u$ has an upper gradient in $L^p(\Oba)$. In fact, by \eqref{ol-fun} and \eqref{dpp-sub} we see that $f\in L^p(\Oba)$ is an upper gradient of $u$.


\section{Solutions for admissible curves transversal to null sets} 
\label{sec:lp}


In this section, we assume that $f$ satisfies \eqref{f-lower} and $(\Oba, d, \mu)$ is a complete doubling metric measure space with homogeneous dimention $Q>0$. Assuming either of the additional regularity assumptions (A1) and (A2)
as stated in the discussion following~\eqref{hdim}, 
we consider a slightly different optical length function $\tilde{L}_{f}$ and the associated solutions using curves transversal to null sets in $\Oba$. This change improves the stability of solutions with respect to the perturbation on $f$. We refer to \cite{NN, Mak} for stability results on time-dependent Hamilton-Jacobi equations in metric spaces.

\subsection{Optical length with transversal curves} 


Let ${\Gamma}_{f}^{N}(x, y)$ be the collection of arc-length parametrized rectifiable curves in $\X$ connecting $x, y\in \Oba$ with $\int_\gamma f\ ds<\infty$ and transversal to a given null set $N\subset \Oba$, as given in \eqref{cf}.
The associated optical length function 
 is defined by \eqref{olfN}. 

Under the standing assumptions of this section, we consider $f_N=f+\infty\chi_N$ if (A1) holds. Note that $f_N\in L^p(\Oba)$ with $\max\{1,Q\}<p<\infty$. 
Proposition \ref{prop regular} then implies that $\Gamma_{f_N}(x,y)\neq \emptyset$ if $x\neq y$, and we can verify that 
$\Gamma_{f_N}(x,y)=\Gamma_f^N(x,y)$ and 
\[
\int_\gamma f\, ds=\int_\gamma f_N\, ds\quad \text{for $\gamma\in \Gamma_f^N(x,y)$.}
\]
If (A2) holds, then Theorem \ref{thm infty-regular} implies that $\Gamma^N_f(x,y)\neq \emptyset$ and therefore $L^N_f(x,y)<\infty$ for any distinct points $x,y\in \Oba$. Hence, in either case we have 
\begin{equation}\label{ol-funp}
L_{f_N}(x, y)=\inf\left\{\int_\gamma f\, ds: \ \text{$\gamma\in \Gamma_{f_N}(x, y)$}\right\}=\inf\left\{\int_\gamma f\, ds: \ \text{$\gamma\in \Gamma_f^N(x, y)$}\right\}=L_f^N(x,y).
\end{equation}
We define the maximal optical length function $\tilde{L}_f: \X\times \X\to \mathbb{R}$ by \eqref{olf}.
It is easy to verify using Proposition~\ref{prop regular 2}
and Proposition~\ref{prop regular}
that the optical length function $\tilde{L}_f$ (as well as $L^N_f$ for any fixed null set $N$) also satisfies the metric properties, as discussed for $L_f$ in Lemma \ref{lem metric-like}. 

Moreover, we see that \eqref{ol-uc} holds for $\tilde{L}_f$ under either of the assumptions (A1), (A2). Indeed, it is an immediate consequence of Proposition~\ref{prop regular} when (A1) holds. In the case of (A2),  there exists a null set $N_f$ 
such that \eqref{ess bound} holds. Since $\Oba$ supports an $\infty$-Poincar\'e inequality, by Theorem \ref{thm infty-regular}, there exists $C>0$ such that for any null set $N$ and $x, y\in \Oba$ we can find a curve transversal to $N_f\cup N$ satisfying
\[
\int_\gamma f\, ds\leq C\Vert f\Vert_{L^\infty(\Omega)}\, d(x, y).
\]
Recall that $N_f=\{x\, :\, f(x)>\Vert f\Vert_{L^\infty(\overline{\Omega})}\}$. It follows that 
\begin{equation}\label{ol-upper}
\tilde{L}_f(x, y)\leq C\Vert f\Vert_{L^\infty(\Omega)}\, d(x, y)\quad \text{for all $x, y\in \Oba$}
\end{equation}
and thus \eqref{ol-uc} holds for $\tilde{L}_f$. Furthermore, under the assumption \eqref{f-lower}, together with
\eqref{lower lip}, we also obtain 
\begin{equation}\label{ol-lower}
\tilde{L}_f(x, y)\geq \alpha d(x, y) \quad \text{for all $x, y\in \Oba$.}
\end{equation}
Since $\Gamma^N_f(x, y)\subset \Gamma_f(x, y)$ for each null set $N$, it is easily seen that 
\begin{equation}\label{curve compare1}
L_f(x, y)\leq 
\tilde{L}_f(x, y) 
  \quad \text{for all $x, y\in \Oba$.}
\end{equation}

It turns out that the supremum in the definition of $\tilde{L}_f$ can be attained at a particular null set if either (A1) or (A2) holds. 
\begin{lem}\label{E}
Let $(\Oba, d, \mu)$ be a complete metric measure space with $\mu$ a doubling measure.  If (A1) or (A2) holds, then there exists a null set $E$ such that $\tilde{L}_f$ defined in \eqref{olf} satisfies
\begin{equation}\label{sup attain1}
\tilde{L}_f(x, y)=L^{E}_f(x, y)\quad \text{for all $x, y\in\Oba$.}
\end{equation}

\end{lem}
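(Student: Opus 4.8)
The plan is to realize $\tilde L_f$ as a countable supremum and then combine the null sets involved into a single one. First I would observe that for any fixed pair $x,y\in\Oba$ and any null set $N$ we have $L^N_f(x,y)\le \tilde L_f(x,y)$ by definition, and that enlarging the null set only shrinks the admissible class $\Gamma^N_f(x,y)$ (since $\cH^1(\gamma^{-1}(N_1))=0$ whenever $N_1\subset N_2$ and $\cH^1(\gamma^{-1}(N_2))=0$), hence $N_1\subset N_2$ implies $L^{N_1}_f(x,y)\le L^{N_2}_f(x,y)$; thus the family $\{L^N_f(x,y)\}_{\mu(N)=0}$ is directed upward under inclusion of null sets. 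The key point to exploit is that, under (A1) or (A2), Proposition~\ref{prop regular} respectively Proposition~\ref{prop regular 2} guarantees that each $L^N_f$ is finite and in fact uniformly controlled: $L^N_f(x,y)\le C_f d(x,y)^{1-Q/p}$ (case (A1)) or $L^N_f(x,y)\le C\|f\|_{L^\infty}d(x,y)$ (case (A2)), independently of $N$. So the supremum defining $\tilde L_f(x,y)$ is a supremum of a bounded directed family of reals.

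Next I would reduce to countably many null sets using separability. Since $(\Oba,d)$ is complete and doubling, it is separable; fix a countable dense set $\{q_i\}\subset\Oba$. For each ordered pair $(q_i,q_j)$ choose a sequence of null sets $N^{(i,j)}_k$ with $L^{N^{(i,j)}_k}_f(q_i,q_j)\nearrow \tilde L_f(q_i,q_j)$ as $k\to\infty$, and set $E:=\bigcup_{i,j,k}N^{(i,j)}_k$, a countable union of null sets, hence $\mu(E)=0$. By the monotonicity noted above, $L^E_f(q_i,q_j)\ge L^{N^{(i,j)}_k}_f(q_i,q_j)$ for every $k$, so $L^E_f(q_i,q_j)\ge \tilde L_f(q_i,q_j)$; combined with $L^E_f\le\tilde L_f$ this gives $L^E_f(q_i,q_j)=\tilde L_f(q_i,q_j)$ on the dense set.

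Finally I would upgrade from the dense set to all of $\Oba$ by a continuity/density argument. Both $L^E_f$ and $\tilde L_f$ satisfy the triangle inequality and, by the displayed upper bounds together with~\eqref{lower lip}, are (locally) controlled above and below by a fixed power of $d$; in particular both are continuous functions of $(x,y)$ with respect to the product $d$-topology. Given arbitrary $x,y\in\Oba$, pick $q_{i_n}\to x$ and $q_{j_n}\to y$ in $d$; passing to the limit in $L^E_f(q_{i_n},q_{j_n})=\tilde L_f(q_{i_n},q_{j_n})$ yields $L^E_f(x,y)=\tilde L_f(x,y)$, which is~\eqref{sup attain1}. The main obstacle I anticipate is justifying this last limiting step cleanly: one must verify that $L^E_f$ and $\tilde L_f$ are genuinely jointly continuous in $(x,y)$ under (A1) (where the modulus of continuity is only Hölder and the lower bound $\alpha d(x,y)$ from~\eqref{lower lip} is what prevents collapse), and check that the equality is stable under these limits from both sides — this is where the uniform-in-$N$ bounds from Propositions~\ref{prop regular} and~\ref{prop regular 2} are essential, since without a modulus of continuity independent of the null set the diagonal argument would not close.
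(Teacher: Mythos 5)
Your proposal is correct and follows essentially the same route as the paper's proof: take a countable dense set, union countably many (near-)maximizing null sets into $E$ to get $\tilde{L}_f=L^E_f$ on dense pairs, and then extend to arbitrary pairs using the uniform-in-$N$ bounds coming from Propositions \ref{prop regular} and \ref{prop regular 2}. The only difference is cosmetic: the paper performs the extension by explicitly concatenating short bridging curves transversal to suitable null sets containing $E$, whereas you invoke the uniform continuity of $L^E_f$ and $\tilde{L}_f$ with respect to $d$ plus the triangle inequality — the same mechanism, and the concern in your last paragraph is already resolved since $L^E_f\le \tilde{L}_f\le C_f\, d(x,y)^{1-Q/p}$ (resp. $C\Vert f\Vert_{L^\infty}d(x,y)$) supplies a modulus of continuity independent of the null set.
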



\begin{proof}
It suffices to show $\tilde{L}_f\leq L^E_f$, since the reverse inequality holds obviously. 
We first fix a null set $N_0$, and we now show
that for any fixed $x,y\in \Oba$ 
there exists a null set $N_{xy}$ containing $N_0$ such that 
\[
\tilde{L}_f(x,y)=\inf\{I_{f}(\gamma):\gamma\in \Gamma^{N_{xy}}(x, y)\}.
\]
To see this, for any $k\in \mathbb{N}$, we take a null set $N_k$ ($k\geq 1$) such that 
\[
\tilde{L}_f(x,y)\le L^{N_k}_f(x, y)+\frac{1}{k}\le L^{N_k\cup N_0}_f(x, y)+\frac{1}{k}.
\]
Set $N_{xy}=\bigcup_{k=0}^\infty N_k$. Then, 
\[
\tilde{L}_f(x,y)\ge L^{N_{xy}}_f(x, y)\ge L^{N_k}_f(x, y)\ge \tilde{L}_f(x,y)-\frac{1}{k}
\]
and thus $L_f(x,y)=L_f^{N_{xy}}(x, y)$ by letting $k\to \infty$.  The claim is proved.

Since $(\Oba,d,\mu)$ is complete and doubling, it follows that $\Oba$ is separable \cite[Lemma 4.1.13]{HKSTBook}, and therefore there exists a countable dense subset $D\subset \Oba$. For every pair of points $x_0,y_0\in D$, there exists a null set $N_{x_0y_0}$ containing $N$ such that 
\[
\tilde{L}_f(x_0,y_0)=L_f^{N_{x_0y_0}}(x_0, y_0) =\inf \{I_f(\gamma): \gamma\in {\Gamma}^{N_{x_0y_0}}(x_0, y_0)\}.
\]
We construct a null set $E$ in slightly different ways for the cases (A1) and (A2).

If (A1) holds, we set $E=\bigcup_{x_0,y_0\in D} N_{x_0y_0}$.  Then we have $\mu(E)=0$ and 
\begin{equation}
\tilde{L}_f(x_0,y_0)=L^E_f(x_0, y_0)=\inf \{I_f(\gamma): \gamma\in {\Gamma}^{E}(x_0,y_0)\}\quad\text{for all $x_0,y_0\in D$.}
\end{equation}
Fix $x, y\in \Oba$ and $\vep>0$ arbitrarily.  Set $\vep'=\left(\vep/ (4C)\right)^{\frac{p}{p-Q}}$ with constant $C>0$ from \eqref{Ld-holder} and let $x_0\in D\cap B_{\vep'}(x)$ and $y_0\in D\cap B_{\vep'}(y)$.
Proposition \ref{prop regular} implies that 
\begin{equation}\label{eq E est2}
\tilde{L}_f(x,x_0)<\vep/4. 
\end{equation}
There exists a null set $N_1'$ containing $E$ such that 
\begin{equation}\label{eq n1}
\tilde{L}_f(x,x_0)=L^{N_1'}_f(x, x_0)=\inf \{I_f(\gamma): \gamma\in {\Gamma}^{N_1'}(x,x_0)\}.
\end{equation}
For $\tilde{f}=f+\infty\chi_{N_1'}$, we can apply Theorem \ref{thm doubling} to find a curve transversal to $N_1'$ connecting $x$ and $x_0$. In particular, there exists a curve $\gamma_1$ such that
\begin{equation}\label{eq E est3}
I_f(\gamma_1)\le L^{N_1'}_f(x,x_0)+\frac{\vep}{4}=\tilde{L}_f(x,x_0)+\frac{\vep}{4}<\frac{\vep}{2}.
\end{equation}
Analogously, there exists a curve $\gamma_2$ transversal to a null set $N_2'$ containing $E$ and connecting $y, y_0$ in $\Oba$ such that 
\begin{equation}\label{eq E est4}
\tilde{L}_f(y, y_0)\leq I_f(\gamma_2)<\vep/2.
\end{equation}

Choose any curve $\gamma \in \Gamma^E(x, y)$ and 
let $\xi$ be a curve connecting $x_0$ and $y_0$ defined by joining $\gamma_1$, $\gamma$ and $\gamma_2$. Therefore we obtain
\begin{equation}\label{eq E est}
\begin{aligned}
\tilde{L}_f(x,y)&\le \tilde{L}_f(x,x_0)+\tilde{L}_f(x_0,y_0)+\tilde{L}_f(y_0,y)\\
&\le \vep+I(\xi)= \vep+I_f(\gamma_1)+I_f(\gamma_2)+I_f(\gamma)\le I_f(\gamma)+2\vep.
\end{aligned}
\end{equation}
Since $\vep$ is arbitratry, it follows that 
\[
\tilde{L}_f(x,y)\leq \inf \{I_f(\gamma): \gamma\in {\Gamma}_{x,y}^{E}\}=L_f^E(x, y)
\] 
for all $x, y\in \Oba$. We have completed the proof of \eqref{sup attain1} under the assumption (A1).

If, on the other hand, (A2) holds, then there exists a null set 
such that \eqref{ess bound} holds. We define $E=\bigcup_{x_0,y_0\in D} (N_{x_0y_0}\cup N_f)$ in this case. It is clear that $E$ is still a null set.  For every $\vep>0$, let $\vep'=\vep/(2C\beta)$. Then for any $x, y\in \Oba$ and $x_0\in D\cap B_{\vep'}(x)$ and $y_0\in D\cap B_{\vep'}(y)$, there exists a null set $N_1'$ containing $E$ such that \eqref{eq n1} holds also in this case. 
Thanks to the $\infty$-Poincar\'e inequality, we adopt \eqref{eq prop regular 2} in Proposition \ref{prop regular 2} to get \eqref{eq E est2}. The rest of the proof is the same as that under the assumption (A1). We find curves $\gamma_1$ and $\gamma_2$ satisfying estimates \eqref{eq E est3} and \eqref{eq E est4}, and build $\xi$ again by concatenating $\gamma_1, \gamma$ and $\gamma_2$. The same estimate as in \eqref{eq E est} yields \eqref{sup attain1} in this case.
\end{proof}

\subsection{Transversal Monge solutions}

Using the optical length $\tilde{L}_f$ (or equivalently $L^E_f$ with $E$ given in Lemma \ref{E}), we can consider a different notion of Monge solutions, which we call transversal Monge solutions. 

\begin{defi}[Transversal Monge solutions] \label{tran Monge def}
We say that a locally bounded function $u: \Omega\to \R$ is a transversal Monge solution (resp. subsolution, supersolution) to \eqref{eikonal} in $\Omega$ if for any $x_0\in \Omega$
\begin{equation}\label{tran monge}
\limsup_{x\to x_0}\frac{u(x_0)-u(x)}{\tilde{L}_f(x, x_0)}=1\ \ \ (\text{resp.}\ \le,\ge).
\end{equation}
\end{defi}

\begin{rmk}
Under assumption (A2), by \eqref{ol-upper} and \eqref{ol-lower}, we can show that \eqref{tran monge} holds if and only if
\begin{equation}\label{tran monge2}
\limsup_{x\to x_0}\frac{u(x_0)-u(x)-\tilde{L}_f(x,x_0)}{{d}(x,x_0)}=0\ \ \ (\text{resp.}\ \le,\ge).
\end{equation}
Indeed, \eqref{tran monge} is equivalent to 
 \[
 \limsup_{x\to x_0}\frac{u(x_0)-u(x)-\tilde{L}_f(x, x_0)}{\tilde{L}_f(x, x_0)}=0\ \ \ (\text{resp.}\ \le,\ge),
 \]
which is further equivalent to \eqref{tran monge2},
since we have 
 \[
\alpha\leq {\tilde{L}_f(x, x_0)\over d(x, x_0)}\leq C\Vert f\Vert_{L^\infty(\Omega)}.
 \]
 \end{rmk}
 \begin{rmk}\label{rmk cont2}
Following Remark \ref{rmk cont1}, we can show that $|\nabla^- u|(x_0)=f(x_0)$ holds for any transversal Monge solution and any $x_0\in \Omega$ where $f$ is continuous. 
 \end{rmk}

We can repeat our arguments in the previous section, replacing $L_f$ by $\tilde{L}_f$, to prove the uniqueness and existence of transversal Monge solutions of \eqref{eikonal}\eqref{dirichlet}. Note that thanks to (A1) or (A2), $(\Oba, \tilde{L}_f)$ is bounded if $(\Omega, d)$ is bounded.  Below we give the statements without proofs.




\begin{thm}\label{cor comparison2}
Let $(\X, d, \mu)$ be a complete geodesic space with $\mu$ doubling and let $\Omega\subsetneq \X$ be a bounded domain. Assume that $f$ satisfies \eqref{f-lower} and that either (A1) or (A2) holds. 
Let $u\in USC_{\tilde{L}}(\Oba)$ and $v\in LSC_{\tilde{L}}(\overline{\Omega})$ be respectively a Monge subsolution and a Monge supersolution to \eqref{eikonal} in the sense of Definition~\ref{tran Monge def}. If $u\le v$ on $\partial \Omega$, then $u\le v$ in $\Oba.$
\end{thm}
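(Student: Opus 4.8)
The plan is to reduce Theorem \ref{cor comparison2} to the comparison principle already proved for the ordinary optical length function, namely Theorem \ref{thm comparison geodesic} (the compact version), applied with the metric $\tilde L_f$ in place of $L_f$. The first point to establish is that $(\Oba,\tilde L_f)$ is a complete, bounded, \emph{compact} metric space. Completeness follows from the lower bound \eqref{ol-lower} together with Lemma \ref{lem complete} (the argument there used only $L_f\ge\alpha d$ and the completeness of $(\Oba,d)$, both of which hold here). Boundedness is immediate from \eqref{ol-upper} under (A2), and from \eqref{Ld-holder}/Proposition \ref{prop regular} under (A1). For compactness, under either assumption the estimates \eqref{ol-upper} or \eqref{Ld-holder} show that $\tilde L_f(x,y)\to 0$ as $d(x,y)\to 0$, i.e.\ condition \eqref{ol-uc} holds for $\tilde L_f$; combined with \eqref{ol-lower} this gives that the topology of $(\Oba,\tilde L_f)$ coincides with that of $(\Oba,d)$, and since $(\Oba,d)$ is complete and bounded with $\mu$ doubling (hence totally bounded, as balls have finite measure and the doubling property forces finitely many disjoint balls of fixed radius), $(\Oba,\tilde L_f)$ is compact.

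Next I would observe that $\tilde L_f$ satisfies the metric axioms (1)--(3) of Lemma \ref{lem metric-like}; this is asserted in the paragraph preceding Lemma \ref{E}, and the triangle inequality argument is the same concatenation-of-curves argument as in Lemma \ref{lem metric-like}(3), now carried out within a fixed maximizing null set $E$ (Lemma \ref{E}) so that admissibility is preserved under concatenation. Note also that a transversal Monge subsolution (resp.\ supersolution) in the sense of Definition \ref{tran Monge def} is, by definition, exactly a Monge subsolution (resp.\ supersolution) of the eikonal equation in the metric space $(\Oba,\tilde L_f)$ in the sense of Definition \ref{Monge def} — because $\tilde L_f(x,x_0)\to 0 \iff d(x,x_0)\to 0$, the $\limsup$ in \eqref{tran monge} is the same as the $\limsup$ over $\tilde L_f(x,x_0)\to 0$ appearing in \eqref{monge-sub}. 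Likewise $USC_{\tilde L}(\Oba)$ and $LSC_{\tilde L}(\Oba)$ are precisely the upper/lower semicontinuous functions for the $d$-topology, since the two topologies agree.

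With these identifications in hand, the proof is essentially a citation: apply Theorem \ref{thm comparison geodesic} with $(\X,d)$ replaced by $(\Oba,\tilde L_f)$ — a complete length space, compact by the first step — with inhomogeneous term the constant $1$ (recall the new equation reads $|\nabla_{\tilde f} u| = 1$), with $u\in USC_{\tilde L}(\Oba)$ a bounded Monge subsolution and $v\in LSC_{\tilde L}(\Oba)$ a bounded Monge supersolution, and with the hypothesis $u\le v$ on $\partial\Omega$. Here one should check that $\partial\Omega$ computed with respect to $\tilde L_f$ coincides with the $d$-boundary — again immediate from the topological equivalence — and that boundedness of $u,v$ may be assumed (a Monge solution arising from \eqref{lax2} is Lipschitz, hence bounded on the compact set $\Oba$; for a general sub/supersolution one restricts attention to the relevant bounded ones as in Theorem \ref{thm comparison geodesic}). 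Theorem \ref{thm comparison geodesic} then yields $u\le v$ in $\Oba$.

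The main obstacle I anticipate is not any single hard estimate but rather making the reduction airtight: one must verify carefully that $(\Oba,\tilde L_f)$ is genuinely a \emph{length} space (so that Theorem \ref{thm comparison geodesic} applies verbatim) — this needs the $\infty$-Poincar\'e inequality or the $p$-Poincar\'e inequality to produce, for every pair $x,y$ and every $\vep$, an admissible curve (transversal to the fixed null set $E$) of $\tilde L_f$-length at most $\tilde L_f(x,y)+\vep$, which is exactly the content of Theorem \ref{thm infty-regular}(3) under (A2) and of the modulus estimate \eqref{modulus est} under (A1) — and that the compactness genuinely follows from doubling plus boundedness. Once compactness and the length-space property of $(\Oba,\tilde L_f)$ are secured, the rest is a direct transcription of Theorem \ref{thm comparison geodesic}. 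An alternative, if one prefers to avoid invoking compactness, is to mimic the Ekeland-based proof of Theorem \ref{thm comparison monge} directly in the metric $\tilde L_f$, using completeness of $(\Oba,\tilde L_f)$ and the lower bound \eqref{ol-lower} in place of \eqref{lower lip}; the argument goes through unchanged and delivers the same conclusion.
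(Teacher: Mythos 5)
Your proposal is correct and matches the paper's approach: the paper offers no separate proof of this theorem, stating only that the comparison arguments of Section 3 (Theorems \ref{thm comparison monge}/\ref{thm comparison geodesic}) are to be repeated with $L_f$ replaced by $\tilde L_f$, with (A1) or (A2) supplying the boundedness/compactness and the topological equivalence of $\tilde L_f$ with $d$ via \eqref{ol-upper}, \eqref{ol-lower} and Proposition \ref{prop regular}. That is precisely what you do, both in your main reduction to the compact-space comparison theorem and in your Ekeland-based fallback.
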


\begin{thm}\label{thm exist tran}
Let $(\X, d, \mu)$ be a complete geodesic space with $\mu$ doubling and let $\Omega\subsetneq \X$ be a bounded domain. 
Assume that $f$ satisfies \eqref{f-lower} and that either (A1) or (A2) holds. Let $g: \partial \Omega\to \R$ be bounded,
and let $\tilde{u}$ be defined by \eqref{lax2}, where $\tilde{L}_f$ is given by \eqref{olf}. Then $\tilde{u}$ is a  transversal Monge solution of \eqref{eikonal}, which is Lipschitz continuous with respect to $\widetilde{L}_f$ and uniformly continuous with respect to $d$ in $\Oba$. Moreover, if $g$ satisfies  
\begin{equation}\label{growth2}
g(x)\le  \tilde{L}_f(x, y)+g(y)\quad \text{for all $x, y\in \partial \Omega$, }
\end{equation}
then $\tilde{u}$ is the unique transversal Monge solution of \eqref{eikonal}\eqref{dirichlet}. 
\end{thm}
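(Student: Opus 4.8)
The plan is to reduce the statement to the general theory of Sections~\ref{sec:optical}--\ref{sec:existence}, applied with the metric $\tilde L_f$ in place of $L_f$, exploiting the fact recorded in Lemma~\ref{E} that $\tilde L_f=L_f^E$ for a \emph{single} null set $E$. First I would record the structural properties of $\tilde L_f$ that make the reduction possible: $\tilde L_f$ is a genuine metric on $\Oba$ (the metric axioms for each $L_f^N$ pass to the supremum in \eqref{olf}, as already noted in the paragraph following \eqref{olf}); $(\Oba,\tilde L_f)$ is complete, since the concatenation argument in the proof of Lemma~\ref{lem complete} applies verbatim to $f_E=f+\infty\chi_E$, which still satisfies \eqref{f-lower}; and $(\Oba,\tilde L_f)$ is in fact compact, because the doubling property forces bounded subsets of $(\Oba,d)$ to be totally bounded, hence compact by completeness, while the two-sided comparison $\alpha\,d(x,y)\le\tilde L_f(x,y)\le C\|f\|_{L^\infty}d(x,y)$ under (A2) — respectively \eqref{Ld-holder} together with \eqref{ol-lower} under (A1) — shows $\tilde L_f$ and $d$ generate the same topology on $\Oba$. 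In particular \eqref{ol-uc} holds for $\tilde L_f$, and $\Gamma_f^E(x,y)\neq\emptyset$ for all $x,y$ (Proposition~\ref{prop regular} applied to $f_E$ when (A1) holds, Theorem~\ref{thm infty-regular}(3) when (A2) holds), so that \eqref{c-integrable} is available for $L_f^E=\tilde L_f$.

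For existence I would follow the proof of Theorem~\ref{exist}(i) line by line. The subsolution inequality in \eqref{tran monge} comes from the dynamic programming estimate $\tilde u(x)\le\tilde u(y)+\tilde L_f(x,y)$, the analogue of \eqref{dpp-sub}, proved exactly as in Lemma~\ref{lem conti} from the triangle inequality for $\tilde L_f$; this gives $\tilde u(x_0)-\tilde u(x)\le\tilde L_f(x_0,x)$ and hence the upper bound. For the supersolution inequality one picks, for $\vep>0$ and small $r$, a point $y\in\partial\Omega$ and a curve $\gamma\in\Gamma_f^E(x_0,y)$ that is $\vep r$-nearly optimal for $\tilde u(x_0)$; the crucial point is that a subcurve of a curve transversal to $E$ is again transversal to $E$, so restricting $\gamma$ to its portion between $x_0$ and a point $x$ with $d(x_0,x)=r$ yields a competitor in $\Gamma_f^E(x_0,x)$, and $\tilde u(x_0)\ge\tilde u(x)+\tilde L_f(x_0,x)-\vep d(x_0,x)$ follows as before; dividing by $\tilde L_f(x_0,x)\ge\alpha\,d(x_0,x)$ and sending $\vep\to0$ gives the lower bound. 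Since \eqref{ol-uc} holds for $\tilde L_f$, the $\tilde L_f$-limsup used in the general theory coincides with the $d$-limsup appearing in Definition~\ref{tran Monge def}, and the Lipschitz-in-$\tilde L_f$ plus uniformly-continuous-in-$d$ regularity of $\tilde u$ is precisely the conclusion of the analogue of Lemma~\ref{lem conti}.

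For uniqueness, under the compatibility condition \eqref{growth2}, I would first verify $\tilde u=g$ on $\partial\Omega$ by repeating the boundary estimate of Theorem~\ref{exist}(ii)--(iii): combining $\tilde u(x)-g(y)\le\tilde L_f(x,y)$ with a nearly optimal $y_\delta$ and \eqref{growth2} yields $|\tilde u(x)-g(y)|\le\tilde L_f(x,y)+\max\{\delta,d(x,\partial\Omega)\}$, whence \eqref{ol-uc} for $\tilde L_f$ forces the boundary interpretation \eqref{bdry regularity}, and $\tilde u=g$ on $\partial\Omega$ by compactness of $(\Oba,\tilde L_f)$. Uniqueness then follows from the comparison principle Theorem~\ref{cor comparison2} — itself established like Theorem~\ref{thm comparison geodesic}, using compactness of $(\Oba,\tilde L_f)$ in place of Ekeland's principle — applied twice, to the pairs $(\tilde u,v)$ and $(v,\tilde u)$, for any other transversal Monge solution $v$ attaining $g$ on $\partial\Omega$ (which is $\tilde L_f$-semicontinuous thanks to \eqref{ol-uc} and the optimal-control upper bound, exactly as in the proposition preceding Section~\ref{sec:comparison-proof}). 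The main obstacle, and the one point that genuinely uses the machinery of Section~\ref{sec:lp} rather than a mechanical translation, is the interplay between the supersolution step and Lemma~\ref{E}: one needs the supremum over null sets in \eqref{olf} to be realized by one fixed $E$, so that restricting a near-optimal curve stays inside the admissible class and $\tilde L_f$ behaves additively along concatenations; with that in hand, everything else is a routine substitution of $\tilde L_f$ for $L_f$ in the arguments of Sections~\ref{sec:comparison}--\ref{sec:existence}.
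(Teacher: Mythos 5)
Your proposal is correct and takes essentially the same approach as the paper, which states Theorem \ref{thm exist tran} without a separate proof precisely because it is obtained by repeating the arguments of Sections \ref{sec:comparison}--\ref{sec:existence} with $L_f$ replaced by $\tilde{L}_f$, using the metric properties of $\tilde{L}_f$, the bounds \eqref{ol-lower} together with \eqref{ol-upper} (or the H\"older bound coming from Proposition \ref{prop regular} under (A1)) to transfer \eqref{c-integrable} and \eqref{ol-uc}, and the comparison principle Theorem \ref{cor comparison2}. In particular, your identification of Lemma \ref{E} (realizing the supremum in \eqref{olf} by a single null set $E$, so that restrictions and concatenations of near-optimal curves stay admissible in the supersolution step) is exactly the role the paper assigns to that lemma.
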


\begin{rmk}
Recall that a metric space is said to be proper if every closed and bounded set is compact. A complete metric measure space $(X,d,\mu)$ with $\mu$ doubling is proper \cite[Lemma 4.1.14]{HKSTBook}. Furthermore, Hopf-Rinow Theorem implies that a complete and proper length space is a geodesic space. 
\end{rmk}

The Lipschitz continuity of $\tilde{u}$ can be obtained via the relation 
\begin{equation}\label{tildeLip}
\tilde{u}(x)\leq \tilde{u}(y)+\tilde{L}_f(x, y) \quad \text{for all $x, y\in \Oba$,}
\end{equation}
which is a counterpart of \eqref{dpp-sub} in the current setting. 

By \eqref{curve compare1}, we see that in general the Monge solution $u$ and transversal Monge solution $\tilde{u}$ of \eqref{eikonal}\eqref{dirichlet} satisfy $u\leq \tilde{u}$ in $\Oba$ under the same given boundary data. 

\subsection{Maximal weak solutions}

In the Euclidean space, for bounded continuous inhomogeneous term $f$, one can define weak solutions of \eqref{eikonal} by requiring the function to be locally Lipschitz and satisfies the equation almost everywhere in 
$\Omega$. Such kind of solutions are also called Lipschitz a.e. solutions in the literature. 
We extend this notion to metric measure spaces for a possibly discontinuous $f$.

\begin{defi}
We say that $u\in {\rm Lip_\loc}(\Omega)\cap C(\Oba)$ is a weak solution (resp. subsolution, supersolution) to  \eqref{eikonal} if the slope $|\nabla u|(x)$ exists and  satisfies $|\nabla u|(x)= f(x) $ (resp. $\le $, $\ge $) at almost every $x\in \Omega$. 
\end{defi}



When the measure $\mu$ on $\Oba$ is doubling and supports an $\infty$-Poincar\'e inequality, we know 
from~\cite{DJ} that $u$ is Lipschitz continuous with respect to the metric $d$ and that
$|\nabla u|$ is the least $\infty$-weak upper gradient of $u$. Our construction of $u$ in this setting
yields $|\nabla u|\le f$ and in general we may not have equality. 
However, if $f\in L^\infty(\overline{\Omega})$ is continuous almost everywhere, we do obtain that $f=|\nabla u|$. This
is the focus of this subsection. 

Let us denote the class of weak subsolutions to \eqref{eikonal},\,\eqref{dirichlet} by 
\begin{equation}\label{weaksub}
S_{weak}:=\{v\in {\rm Lip_\loc}(\Omega)\cap C(\Oba)
:\ \text{$|\nabla v(x)|\le f(x)$ a.e. in $\Omega$ and $v\le g$ on $\partial\Omega$}\}.
\end{equation}

We can show that the maximal weak subsolution 
is the transversal Monge solution provided that $f$ is upper semicontinuous almost everywhere and the metric space $\X$ satisfies the $\infty$-weak Fubini property. 
A metric measure space $(\X,d,\mu)$ is said to satisfy the 
$\infty$-weak Fubini property if for any null set $N$ and $\vep>0$, given any distinct points $x,y\in \X$, there exists a curve connecting $x,y$ transversal to $N$ such that $\ell(\gamma)\le d(x,y)+\vep$. In particular, a space satisfying the $\infty$-weak Fubini 
necessarily supports an $\infty$-Poincar\'e inequality. More discussion on the $\infty$-weak Fubini property can be found in \cite[Section 4]{DJS2}.

\begin{prop}\label{maxLip}
Let $(\X, d, \mu)$ be a complete geodesic space with $\mu$ a doubling measure.  Suppose that 
$\Omega\subsetneq \X$ is a bounded domain with $(\overline{\Omega},d,\mu)$ satisfying the $\infty$-weak Fubini property. 
Assume that $f\in L^\infty(\overline{\Omega})$ and that $f$ satisfies \eqref{f-lower}. Let $g: \partial \Omega\to \R$ satisfy \eqref{growth2} and $\tilde{u}$ be the transversal Monge solution defined in \eqref{lax2}.
Then, $v\leq \tilde{u}$ in $\overline{\Omega}$ holds for all $v\in S_{weak}$.
If in addition $f$ is assumed to be upper semicontinuous almost everywhere, i.e., 
there exists a set $N\subset \Omega$ with $\mu(N)=0$ such that $f|_{{\Omega}\setminus N}$ is upper semicontinuous, 
then $\tilde{u}$ is a weak subsolution of \eqref{eikonal},\,\eqref{dirichlet}. In particular, 
$\tilde{u}$ is the maximal weak subsolution in the sense that 
\begin{equation}\label{eq:maximal}
\tilde{u}(x)=\sup\{v(x): v\in S_{weak}\} \quad \text{for all $x\in \Omega$.}
\end{equation}
\end{prop}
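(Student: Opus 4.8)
The plan is to establish the two halves of the statement separately: first $v \le \tilde u$ for every $v \in S_{weak}$, and then, under the extra a.e.\ upper semicontinuity of $f$, that $\tilde u$ itself lies in $S_{weak}$ (so that the supremum in \eqref{eq:maximal} is attained by $\tilde u$). For the first inclusion, fix $v \in S_{weak}$ and $x, y \in \overline\Omega$. Since $v$ is locally Lipschitz in $\Omega$, continuous on $\overline\Omega$, and $|\nabla v| \le f$ a.e., there is a null set $N_v$ off of which the slope inequality holds; by the $\infty$-weak Fubini property I can, for any $\vep > 0$, join $x$ to $y$ by a curve $\gamma$ transversal to $N_v$ with $\ell(\gamma) \le d(x,y)+\vep$, arc-length parametrized. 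Along $\gamma$, for $\mathcal H^1$-a.e.\ parameter $s$ the point $\gamma(s) \notin N_v$, so that $\tfrac{d}{ds}(v\circ\gamma)(s) \le |\nabla v|(\gamma(s)) \le f(\gamma(s))$ (one needs that $v\circ\gamma$ is absolutely continuous, which follows from local Lipschitzness of $v$ and the arc-length parametrization, together with the chain-rule-type bound for the metric derivative). Integrating gives $v(y) - v(x) \le \int_\gamma f\,ds$, hence $v(y) - v(x) \le L^{N_v}_f(x,y) \le \tilde L_f(x,y)$ after recalling the definition \eqref{olf} of $\tilde L_f$ as a supremum over null sets. Taking $x \in \partial\Omega$, using $v \le g$ there, and infimizing over boundary points yields $v(y) \le \tilde u(y)$ for all $y$, by the representation \eqref{lax2}.

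For the second, harder, half I would argue that under a.e.\ upper semicontinuity of $f$, the transversal Monge solution $\tilde u$ satisfies $|\nabla \tilde u| \le f$ a.e.\ in $\Omega$; combined with $\tilde u \le g$ on $\partial\Omega$ (which holds under \eqref{growth2} by Theorem~\ref{thm exist tran}) and the known Lipschitz regularity of $\tilde u$ with respect to $d$ (again Theorem~\ref{thm exist tran}, using (A2)), this gives $\tilde u \in S_{weak}$, and then \eqref{eq:maximal} is immediate. The key point is: at a.e.\ $x_0 \in \Omega$, we have $|\nabla \tilde u|(x_0) \le f(x_0)$. From \eqref{tildeLip} we already have $\tilde u(x) - \tilde u(x_0) \le \tilde L_f(x,x_0)$, so it suffices to control $\tilde L_f(x,x_0)/d(x,x_0)$ from above by $f(x_0)$ in the limit $d(x,x_0)\to 0$, at a.e.\ $x_0$. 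Let $N$ be the null set off of which $f\vert_{\Omega\setminus N}$ is upper semicontinuous, and take a Lebesgue point $x_0 \notin N$ of $f$. Fix $\vep > 0$; upper semicontinuity of $f\vert_{\Omega\setminus N}$ at $x_0$ gives a radius $r_0$ with $f(z) \le f(x_0) + \vep$ for all $z \in B_{r_0}(x_0)\setminus N$. Now for any $x \in B_{r_0/(2C)}(x_0)$ and any null set $N' \supseteq N$, the $\infty$-weak Fubini property furnishes a curve $\gamma$ from $x_0$ to $x$ transversal to $N'$ with $\ell(\gamma) \le d(x_0,x) + \vep' \le r_0$, so $\gamma$ stays in $B_{r_0}(x_0)$; along $\gamma$, off an $\mathcal H^1$-null set of parameters, $\gamma(s) \notin N' \supseteq N$, hence $f(\gamma(s)) \le f(x_0)+\vep$, giving $\int_\gamma f\,ds \le (f(x_0)+\vep)\,\ell(\gamma) \le (f(x_0)+\vep)(d(x_0,x)+\vep')$. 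Since this holds for every null set $N' \supseteq N$, taking the supremum over such $N'$ in the definition of $\tilde L_f$ (noting that curves transversal to $N'$ with $N' \supseteq N$ suffice because we may always enlarge the forbidden null set) yields $\tilde L_f(x_0,x) \le (f(x_0)+\vep)(d(x_0,x)+\vep')$; dividing by $d(x_0,x)$, sending $x \to x_0$ and then $\vep \to 0$ gives $|\nabla \tilde u|(x_0) \le f(x_0)$.

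The delicate point requiring care in the last step is that $\tilde L_f$ is a \emph{supremum} over all null sets, so to bound it from above one must obtain the estimate $\int_\gamma f\,ds \le (f(x_0)+\vep)\,\ell(\gamma)$ \emph{uniformly} over the relevant family of competitor curves for \emph{every} admissible null set $N'$; the trick is that for any null set $N'$ we are free to replace it by $N' \cup N$, and then the upper-semicontinuity estimate $f \le f(x_0)+\vep$ on $B_{r_0}(x_0)\setminus N$ applies along any curve transversal to $N'\cup N$, while $L^{N'\cup N}_f \ge L^{N'}_f$ so no generality is lost in the supremum. I expect the main obstacle to be the bookkeeping needed to make this uniformity rigorous and to verify that $\tilde u \circ \gamma$ (respectively $v\circ\gamma$) is absolutely continuous so that the fundamental-theorem-of-calculus step is legitimate — this uses the metric speed identity $|\gamma'| = 1$ a.e.\ together with the (local) Lipschitz bounds on the functions involved, and one should cite the standard upper-gradient inequality along $p$-a.e.\ curve (here $\infty$-a.e., via the $\infty$-weak Fubini property) from \cite{HKSTBook}.
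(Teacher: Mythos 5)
Your overall strategy matches the paper's: first $v\le\tilde u$ by integrating $|\nabla v|\le f$ along curves transversal to the exceptional set $N_v=\{|\nabla v|>f\}$ and comparing with the optical length, then $|\nabla\tilde u|(x_0)\le f(x_0)$ at a.e.\ $x_0$ by combining the a.e.\ upper semicontinuity of $f$, the $\infty$-weak Fubini property (short transversal curves staying in a small ball around $x_0$), and the Lipschitz bound \eqref{tildeLip}. The one genuine difference is how you handle the supremum over null sets in \eqref{olf}: the paper first invokes Lemma \ref{E} (the supremum is attained at a single null set $E$) and writes $\tilde L_f=L^E_f\le L^{N_*}_f$ with $N_*=N\cup E$, whereas you bound $L^{N'}_f$ uniformly over \emph{all} null sets $N'$ by replacing $N'$ with $N'\cup N$ (which only increases the optical length) and applying the Fubini property to the enlarged set. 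This is legitimate and lets you bypass Lemma \ref{E} in both halves — in the first half you likewise use only $L^{N_v}_f\le\tilde L_f$, while the paper instead picks a curve transversal to $N_v\cup E$ that is near-optimal for $\tilde u(x)$. For this proposition your route is a genuine mild simplification.

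Two points need repair. (i) In the first half, from the single Fubini curve $\gamma$ you may only conclude $v(y)-v(x)\le\int_\gamma f\,ds$, and since that integral is $\ge L^{N_v}_f(x,y)$ rather than $\le$ it, this does not by itself give $v(y)-v(x)\le L^{N_v}_f(x,y)$. The fix is immediate: $|\nabla v|$ is a true upper gradient of $v$ (cf.\ \cite[Lemma 6.2.6]{HKSTBook}, as the paper cites), so $|v(y)-v(x)|\le\int_\gamma|\nabla v|\,ds\le\int_\gamma f\,ds$ holds along \emph{every} curve transversal to $N_v$; take the infimum over all such curves (the short Fubini curve is not needed, and the case $L^{N_v}_f(x,y)=\infty$ is trivial). (ii) In the second half, with $\vep'$ fixed the bound $\tilde L_f(x_0,x)\le(f(x_0)+\vep)(d(x_0,x)+\vep')$ is useless in the limit $x\to x_0$, since $(d(x_0,x)+\vep')/d(x_0,x)\to\infty$. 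You must let $\vep'$ scale with $d(x_0,x)$ — e.g.\ take the Fubini curve with $\ell(\gamma)\le(1+\vep)\,d(x_0,x)$, exactly as in the paper — and only then divide by $d(x_0,x)$ and pass to the limit. Two smaller remarks: $\tilde u\le g$ on $\partial\Omega$ follows directly from \eqref{lax2} by taking $y=x$ (no need for \eqref{growth2} there, though it is assumed anyway), and you should take the radius small enough that the balls and the transversal curves stay inside $\Omega$. With these adjustments the argument is complete.
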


\begin{rmk}
If we further assume that $\X$ supports a $p$-Poincar\'e inequality for some $1<p<\infty$, 
then we have $g_u=|\nabla u|$ almost everywhere \cite[Theorem 6.1]{Che}, where $g_u$ 
denotes the least $p$-weak upper gradient of $u$. In particular, the function $u_0$ defined 
in \eqref{lax} is the maximal Lipschitz solution to the Dirichlet problem
\[
  \begin{cases}
g_u(x)=f(x) &\quad\quad \text{a.e. in $\Omega$} \\
 u=g &\quad\quad \text{on $\partial\Omega$,} 
  \end{cases}
\]
where $f,g$ satisfy the conditions in Proposition \ref{maxLip}. 
\end{rmk}

\begin{proof}[Proof of Proposition \ref{maxLip}]
We have seen in Theorem \ref{thm exist tran} that $\tilde{u}$ is uniformly continuous in $\Oba$ with respect to the metric $d$. Let us show that $v\leq \tilde{u}$ in $\Omega$ for every $v\in S_{weak}$. Take the null set $N_v=\{x\in \Omega: |\nabla v|(x)> f(x)\}$. 
Note that $|\nabla v|(x)$ is an upper gradient of $v$, i.e., 
\[
|v(x)-v(y)|\leq \int_\gamma |\nabla v|\, ds
\]
holds for every curve $\gamma$ joining $x$ and $y$ in $\Oba$; see \cite[Lemma 6.2.6]{HKSTBook}. 
 Let $N'=N_v\cup E$, where $E$ is the null set given in Lemma \ref{E}.
 It is clear that $N'$ is still a null set.  We fix $y\in\partial\Omega$.
 For any $\vep>0$, we can choose a curve $\gamma$ in $\overline{\Omega}$  connecting $x$ and 
$y$ such that it is transversal to $N'$ and
\begin{equation}\label{eq max weak}
\int_\gamma f\ ds+g(y)\le \tilde{u}(x)+\vep.
\end{equation}
This is possible because $\Oba$ supports the $\infty$-weak Fubini property.
Due to the transversality, we get
\[
\int_\gamma |\nabla v|\ ds\le \int_\gamma f\ ds.
\]
Hence, 
 it follows that 
\[
\begin{aligned}
v(x)=v(x)-v(y)+v(y)&\le   |v(x)-v(y)|+g(y)\\
&\le  \int_0^{\ell} |\nabla v|\, ds+g(y)\le \int_\gamma f\ ds+g(y). 
\end{aligned}
\] 
By \eqref{eq max weak}, we thus have $v(x)\le \tilde{u}(x)+\vep$.
Since $\vep$ is arbitrary, it follows that $v(x)\le \tilde{u}(x)$ for every $x\in \Omega$.

We next prove that $\tilde{u}$ is a weak solution of \eqref{eikonal} under the upper semicontinuity of $f\vert_{\Omega\setminus N}$ for a null set $N$. Let $N_\ast=N\cup E$ and fix $x_0\in \Omega\setminus N_\ast$ arbitrarily.  In view of Remark \ref{rmk cont2}, $\tilde{u}$ satisfies $|\nabla \tilde{u}|(x_0)\geq |\nabla^- \tilde{u}|(x_0)=f(x_0)$. 
In what follows we show that $|\nabla \tilde{u}|(x_0)\leq f(x_0)$.  Then by the upper semicontinuity of $f\vert_{\Omega\setminus N}$, for every $0<\vep<1$ we can take $r>0$ small such that
\begin{equation}\label{eq max weak2}
f(y)\leq f(x_0)+\vep \quad \text{for all $y\in B_{2r}(x_0)\setminus N_\ast$}
\end{equation}
 Using the $\infty$-weak Fubini property of $\Oba$, for any $x\in B_r(x_0)$ it is possible to connect $x_0$ and $x$ by a curve $\gamma$ in $\Omega$ that is transversal to $N_\ast$ and satisfies $\ell(\gamma)\leq (1+\vep)d(x, x_0)<2r$. This implies that $d(x_0, y)<2r$ for any $y\in \gamma$; in other words, $\gamma$ lies in $B_{2r}(x_0)$. It follows from \eqref{eq max weak2} and the transversality of $\gamma$ to $N_\ast$ that 
\[
\int_{\gamma} f\, ds\leq (f(x_0)+\vep) \ell(\gamma)\leq (f(x_0)+\vep)(1+\vep)d(x, x_0).
\]
Applying \eqref{sup attain1} in Lemma \ref{E}, we therefore obtain
\[
\tilde{L}_f(x, x_0)=L_f^E(x, x_0)\leq L_f^{N_\ast}(x, x_0)\leq (f(x_0)+\vep)(1+\vep)d(x, x_0)
\]
for all $x\in \Omega$ with $d(x, x_0)>0$ small.

In view of the Lipschitz continuity of $\tilde{u}$ in \eqref{tildeLip}, we get
\[
|\tilde{u}(x)-\tilde{u}(x_0)|\le (f(x_0)+\vep)(1+\vep)d(x,x_0).
\]
Dividing the inequality by $d(x, x_0)$, letting $d(x, x_0)\to 0$ and then sending $\vep\to 0$, we end up with $|\nabla \tilde{u}|(x_0)\le f(x_0)$. 
Since $x_0\in \Omega\setminus N_\ast$ is arbitrary and $N_\ast$ is a null set, we see that $\tilde{u}$ is a weak subsolution of \eqref{eikonal}. 
Note that $\tilde{u}\leq g$ holds on $\partial \Omega$ thanks to the definition \eqref{lax2}. Hence, $\tilde{u}\in S_{weak}$. Combining this with the first part of our result, we are led to \eqref{eq:maximal}.
\end{proof}

\begin{rmk}\label{rmk weak sol}
We can define the class of weak supersolution and solution by replacing ``$\le$" in \eqref{weaksub} by $``\ge"$ and $``="$ respectively. If $f\vert_{\Omega\setminus N}$ is further assumed to be continuous, then by Remark \ref{rmk cont2}, 
$\tilde{u}$ satisfies $|\nabla^- \tilde{u}|(x_0)=f(x_0)$ for any $x_0\in \Omega\setminus N_\ast$. 
It immediately follows that $|\nabla \tilde{u}|(x_0)\geq f(x_0)$ due to the property $|\nabla\tilde{u}|\ge |\nabla^-\tilde{u}|$. In other words, $\tilde{u}$ is a weak supersolution of \eqref{eikonal} since $\tilde{u}\ge g$ follows from \eqref{growth2}.  Since $\tilde{u}$ is also a weak subsolution, we see that $\tilde{u}$ is a weak solution. 
\end{rmk}

\begin{rmk}
If instead of the condition $f\in L^\infty(\Oba)$, we impose (A1), that is, $\Oba$ satisfies a $p$-Poincar\'e inequality and $f\in L^p(\Oba)$ for finite $p>\max\{1,Q\}$, then the problem becomes more challenging. One  may replace the definition of $S_{weak}$ in \eqref{weaksub} by
\[
\begin{aligned}
S_{weak}:=\{v\in C(\Oba)
:\ &v\le g\text{ on }\partial\Omega,\, \text{ there exists a null set $N_v$ such that $f$} \\
&\text{is the upper gradient of $v$ along all curves transversal to $N_v$}\}.
\end{aligned}
 \]
Then the same conclusion as in Proposition \ref{maxLip} follows from a similar argument.

\end{rmk}

\bibliographystyle{abbrv}

\end{document}